\newtheorem{thm}{Theorem}[section]
\newtheorem{cor}[thm]{Corollary}
\newtheorem{lem}[thm]{Lemma}
\newtheorem{prop}[thm]{Proposition}
\newtheorem{fact}[thm]{Fact}
\theoremstyle{definition}
\newtheorem{defn}[thm]{Definition}
\theoremstyle{remark}
\newtheorem{rem}[thm]{Remark}
\numberwithin{equation}{section}
\newcommand{\PP}{\mathcal{P}}
\newcommand{\QQ}{\mathcal{Q}}
\newcommand{\MM}{\mathfrak{M}}
\newcommand{\Hom}{\textnormal{Hom}}
\newcommand{\Ass}{\textnormal{Asso}}
\newcommand{\Sub}{\textnormal{Sub}}
\newcommand{\eps}{\varepsilon}
\newcommand{\LL}{$(\mathsf{L}_2)$}
\newcommand{\LLL}{$(\mathsf{L}_{1+1})$}
\newcommand{\KK}{$(\mathsf{K}_2)$}
\newcommand{\KKK}{$(\mathsf{K}_{1+1})$}
\begin{document}

\address{CNRS and Univ Lyon, Univ Claude Bernard Lyon 1, Institut Camille Jordan, 43 blvd. du 11 novembre 1918, F-69622 Villeurbanne}
\email{cornulier@math.univ-lyon1.fr}
\subjclass[2010]{Primary 13C05; Secondary 06E15, 13E05, 13E10}

\title[Counting submodules]{Counting submodules of a module over a noetherian commutative ring}
\author{Yves Cornulier}%
\date{May 31, 2019}%\today}
\thanks{Supported by ANR JCJC CoCoGro. Partly undertaken during NPC programme in Newton Institute in Cambridge under EPSRC Grant EP/K032208/1}

%13Cxx Theory of modules and ideals
%13C05 Structure, classification theorems
%13Exx Chain conditions, finiteness conditions
%13E05 Noetherian rings and modules
%13E10  	Artinian rings and modules, finite-dimensional algebras
%06E15  	Stone spaces

% ----------------------------------------------------------------
\begin{abstract}
We count the number of submodules of an arbitrary module over a countable noetherian commutative ring. We give, along the way, various characterizations of minimax modules, as well as a structural description of meager modules, which are defined as those that do not have the square of a simple module as subquotient. We deduce in particular a characterization of uniserial modules over commutative noetherian rings. 
\end{abstract}
\maketitle
% ----------------------------------------------------------------
\section{Introduction}
All the rings in this paper are understood to be associative, unital and {\bf commutative} (unless explicitly stated), and countable sets are not assumed to be infinite.

An old classical result of Boyer \cite{Boy} (rediscovered 30 years later in \cite{RF}) characterizes those abelian groups whose set of subgroups is countable. Namely, these are abelian groups that are minimax and do not admit $C_{p^\infty}^2$ as a subquotient for any prime $p$. Here, $C_{p^\infty}$ denotes the Pr\"ufer group $\mathbf{Z}[1/p]/\mathbf{Z}$, and minimax means that it lies in an extension of abelian groups with a noetherian (or ``max") subgroup and artinian (or ``min") quotient. Keeping in mind that abelian groups are the same as $\mathbf{Z}$-modules, it is natural to seek generalizations of this results. A first immediate generalization can be done replacing $\mathbf{Z}$ with a countable principal ideal domain (PID):

\begin{prop}\label{pidcase}
Let $A$ be a countable PID and $M$ an $A$-module. Then the set of submodules of $M$ is countable if and only if $M$ is a minimax $A$-module and for every prime $\pi\in A$, $M$ does not admit $(A[\pi^{-1}]/A)^2$ as a subquotient.
\end{prop}

Let $K$ be a countable field. A natural example of a countable PID is the ring of polynomials $K[t]$. A $K[t]$-module is the same as a vector space over $K$ endowed with an operator $T$, and a submodule is the same as a subspace that is stable under $T$. So Proposition \ref{pidcase} tells when the number of $T$-stable subspaces is countable.

\begin{rem}\label{unconcase}In this context, the restriction to countable $A$ is natural. Indeed, if $K$ is an arbitrary field, say algebraically closed of infinite cardinal $\alpha$, the set of submodules of $A=K[t]$, that is, the set of ideals of $A$, is the free semigroup on the set $(t-s)_{s\in K}$, and its cardinal is $\alpha$. If we consider an arbitrary infinite subset $W$ in $K^*$ and consider the localization $A_W=K[t][(t-s)^{-1}:s\in K\smallsetminus W]$, then the set of ideals is now the free semigroup on the set $(t-s)_{s\in W}$, and its cardinal coincides with that of $W$. In particular, we can prescribe arbitrarily the (infinite) cardinal of the set of submodules of $A_W$ as a module over itself, with the only obvious restriction that it is $\le\alpha$.
\end{rem}

We omit the proof of Proposition \ref{pidcase}, because it is a straightforward adaptation of the case $A=\mathbf{Z}$, and because it follows as a particular case of the considerably more involved Theorem \ref{main} below.

The first main result of this paper is to tackle the case of countable noetherian rings. We were initially motivated by its special case of rings that are finitely generated, or finitely generated over a countable field. This essentially corresponds to abelian groups endowed with a finite commuting family of operators.

Before stating Theorem \ref{main}, we need to introduce and discuss a few properties, for an $A$-module. Recall that an $A$-module is noetherian (resp.\ artinian) if every ascending (resp.\ descending) chain of submodules stabilizes.

\begin{defn}An $A$-module $M$ is minimax if it has an artinian quotient with noetherian kernel.\end{defn}

This is an important finiteness condition, appearing in various contexts, and its definition is valid over an arbitrary associative ring $A$. Indeed, in our setting ($A$ commutative and noetherian), it admits the following characterizations, which, in the easier case of $A=\mathbf{Z}$, were observed (except (\ref{iomegan}) below) in \cite[Lemma 4.6]{BCGS}.
 
\begin{thm}\label{minidir}
Let $A$ be a noetherian ring and let $M$ be an $A$-module. The following are equivalent:
\begin{enumerate}[(i)]
\item\label{imini} $M$ is minimax;
\item\label{idires} $M$ does not admit an infinite direct sum of simple modules as subquotient;
\item\label{idire} $M$ does not admit an infinite direct sum of nonzero modules as subquotient;
\item\label{i2z} the poset $\Sub_A(M)$ contains no subposet isomorphic to $(2^\mathbf{Z},\subseteq)$;
\item\label{ichainr} there is no chain of submodules of $M$ that is order-isomorphic to $(\mathbf{R},\le)$;
\item\label{ichain} there is no chain of submodules of $M$ that is order-isomorphic to $(\mathbf{Q},\le)$;
\item\label{iomegan} there exists an integer $n$ such that no chain of submodules of $M$ is order-isomorphic to the ordinal $\omega^n$.
\end{enumerate}
\end{thm}

That minimax modules satisfy the other conditions (except maybe (\ref{iomegan})) is essentially immediate and the converse, namely (\ref{idires})$\Rightarrow$(\ref{imini}) takes a little more work. See \S\ref{smx} for the proof of Theorem \ref{minidir}. Note that (\ref{iomegan}), unlike the other ordered conditions, is not symmetric under reversing the order: indeed if $A$ is a noetherian ring of infinite Krull dimension, then, despite being a minimax module over itself, it admits a chain of ideals that is reverse-isomorphic to the countable ordinal $\omega^\omega=\sup_n\omega^n$ \cite[Theorem 2.12]{Bass}. Furthermore, there exist \cite{GoRo,Gul2} noetherian rings $A$ of uncountable Krull dimension; by the same result of Bass, they admit, for every countable ordinal $\alpha$, a chain of ideals that is reverse-isomorphic to $\alpha$. Also note that if $A$ has finite Krull dimension $d$, it follows from the proof that $n$ can be replaced, independently of $M$, with $d+1$ in~(\ref{iomegan}). In the case $A$ is local and complete, there is one more characterization of $M$ being minimax, namely that $M$ is isomorphic to its Matlis bidual, see \S\ref{matlis}.

\begin{defn}
An $A$-module $M$ satisfies Property {\LL} if for every artinian quotient $Q$ of $M$, the poset of submodules of $Q$ does not contain any chain isomorphic to $\omega^2$.
\end{defn}

Despite its simplicity, this definition is highly non-explicit in terms of the structure of $M$: a more natural definition is given later, based on the notion of Loewy dimension. See the characterization in Corollary \ref{l2cara}. Let us just mention that the ring $A$ has Krull dimension $\le 1$ if and only if every $A$-module satisfies Property {\LL}.

If $M$ does not satisfy Property {\LL}, then it admits a subquotient $Q$ that is artinian and fails to satisfy Property {\LL}, while all proper submodules of $Q$ satisfy Property {\LL}. This subquotient cannot always be chosen to be a quotient, see Remark \ref{noquo2}. 

Property {$(\mathsf{L}_2)$} is stable under taking submodules, quotient modules and extensions (the case of submodules is not trivial; one can for instance use Corollary \ref{loewydec}). Note that the definition of Property {$(\mathsf{L}_2)$} only depends on the poset $\textnormal{Sub}_A(M)$.

Being both minimax and satisfying Property {$(\mathsf{L}_2)$} can be rephrased in a convenient way.

\begin{prop}\label{l2intro}
Let $A$ be a noetherian ring and let $M$ be an $A$-module. The following are equivalent.
\begin{enumerate}[(i)]
\item\label{l2_1} $M$ satisfies {\LL} and is minimax; 
\item\label{l2_2} $M$ admits a composition series $0\subset M_0\subset M_1\subset\dots\subset M_n=Q$ such that $M_0$ is finitely generated, and for every $i\ge 1$, every proper submodule of $M_i/M_{i-1}$ has finite length;
\item\label{l2_3} the poset of submodules of $M$ has no chain that is order-isomorphic to $\omega^2$.
\end{enumerate}
\end{prop}

The third property we have to emphasize is the following:

\begin{defn}
An $A$-module $M$ satisfies {\LLL} if no subquotient of $M$ is isomorphic to the square $P\oplus P$ of some artinian $A$-module $P$ of infinite length.
\end{defn}

A characterization among minimax modules is as follows:

\begin{prop}
Let $A$ be a noetherian ring and let $M$ be a minimax $A$-module. Then $M$ fails to satisfy {\LLL} if and only if for every finitely generated submodule $N$ such that $Q=M/N$ is artinian, $Q$ admits a submodule isomorphic to $P\oplus P$, where $P$ has infinite length but all its proper submodules have finite length.
\end{prop}

Among minimax modules, unlike Property {\LL}, Property {\LLL} cannot be characterized in terms of the order type of chains of the poset of submodules. On the one hand, over $A=\mathbf{Z}$, for distinct primes $p,q$, the order types of the chains of submodules of $C_{p^\infty}\oplus C_{q^\infty}$ and $C_{p^\infty}^2$ are the same (these are precisely all chains isomorphic to a subchain of $\omega+\omega$). On the other hand, $C_{p^\infty}\oplus C_{q^\infty}$ satisfies {\LLL} while $C_{p^\infty}^2$ fails to satisfy {\LLL}. Nevertheless, Property {\LLL} can be characterized in terms of the poset of submodules, see Corollary \ref{posetlll}.

We can now state our main result.

\begin{thm}\label{main}
Let $A$ be a countable noetherian (commutative) ring. Let $M$ be an $A$-module. Then the set of submodules of $M$ is countable if and only if $M$ is minimax and satisfies Properties {\LL} and {\LLL}.

Otherwise, $M$ has at least $2^{\aleph_0}$ submodules, with equality if $M$ is countable.
\end{thm}

Under the setting of Theorem \ref{main}, if $M$ is uncountable, it was established by Burns, Okoh, Smith and Wiegold \cite{BOSW} that the number of submodules of $M$ is the largest possible, namely $2^{\textnormal{card}(M)}$.  The case considered here, namely that of countable modules, is more delicate, as we see that the cardinality of $\Sub_A(M)$ is not governed by the cardinality of $M$ (see also Remark \ref{rembosw}). 

The proof of Theorem \ref{main}, concluded in \S\ref{promain} makes uses of Matlis duality (see \S\ref{remi} and \S\ref{matlis}), which is instrumental in the understanding of artinian modules over noetherian rings. Namely, it relates artinian $A$-modules with finitely generated modules over the completions of its localizations at maximal ideals, and the proof of Theorem \ref{main} is based on the following, proved in \S\ref{ficom}.

\begin{thm}\label{thmB}
Let $R$ be a noetherian complete local ring with countable residual field. Let $M$ be a finitely generated $R$-module. Then the set of submodules of $M$ is countable if and only if the following two conditions are satisfied
\begin{itemize}
\item[{\KK}] $M$ has Krull dimension at most one;
\item[{\KKK}] for every prime ideal $\PP$ of coheight one, $(A/\PP)^2$ is not isomorphic to any subquotient of $M$.
\end{itemize}
Otherwise, $M$ has $2^{\aleph_0}$ submodules.
\end{thm}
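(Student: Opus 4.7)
The plan is to separate the proof into the necessity of (B2) and (B3), where failure gives $2^{\aleph_0}$ submodules, and their sufficiency, where both holding gives countably many. In both directions the key observation is that submodules of a subquotient $L = N'/N''$ of $M$ (with $N'' \subseteq N' \subseteq M$) lift injectively to submodules of $M$ via preimage in $N'$, so it suffices to exhibit $2^{\aleph_0}$ submodules of an appropriate subquotient whenever a condition fails. (The matching upper bound $|\Sub(M)| \leq 2^{\aleph_0}$ follows from $M$ being noetherian with $|M| \leq 2^{\aleph_0}$, so each submodule is determined by a finite tuple of generators.)

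For the necessity: if (B2) fails, some associated prime $\PP$ of $M$ has $\dim R/\PP \geq 2$, giving $R/\PP \hookrightarrow M$; one then invokes the classical fact that a complete local noetherian domain $S$ of dimension at least two with countable residue field has $2^{\aleph_0}$ ideals, provable for instance via Cohen's structure theorem and an explicit family of principal prime ideals such as $(y - f(x))$ for $f \in x k[[x]]$ inside a regular subring $k[[x,y]] \subseteq S$ (distinct $f$'s giving distinct ideals by a unit-check). If (B3) fails, say $(R/\PP)^2$ is a subquotient for some coheight-one $\PP$, writing $R' = R/\PP$ (a one-dimensional complete local domain containing $k[[t]]$ or $W(k)[[t]]$, hence of cardinality $2^{\aleph_0}$), the graphs $\Gamma_r = \{(a, ra) : a \in R'\}$ for $r \in R'$ furnish $2^{\aleph_0}$ pairwise distinct submodules of $(R')^2$.

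For the sufficiency assume (B2) and (B3). The maximal finite-length submodule $T \subseteq M$ is annihilated by some $\mathfrak{m}^n$, hence is finitely generated over the countable artinian ring $R/\mathfrak{m}^n$, and so is itself countable with only countably many submodules. The quotient $M/T$ is purely one-dimensional (all associated primes of coheight one) and still satisfies (B3). It admits a finite prime filtration whose successive quotients are cyclic of the form $R/\PP_i$; each such $R/\PP_i$ --- being $k$ for maximal $\PP_i$ or a one-dimensional complete local domain for coheight-one $\PP_i$ --- has only countably many ideals (the latter, nontrivial case, established by passing to the integral closure, a finite product of complete DVRs with countable residue fields).

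The main obstacle is the final combinatorial step: showing that countably many submodules on each layer of the filtration propagates to a countable submodule lattice on $M$ itself. Naively, for an extension $0 \to K \to N \to Q \to 0$, the sub-extensions interpolating a chosen pair $(L \subseteq K,\ L' \subseteq Q)$ are parametrized by a subset of $\Hom_R(L', K/L)$, which can easily be uncountable (for $\PP$ of coheight one, $\Hom_R(R/\PP, R/\PP) \cong R/\PP$ is uncountable). Condition (B3) is precisely the hypothesis that tames this gluing: it excludes the split configurations responsible for maximal $\Hom$-dimension and forces the sub-extensions actually realized inside $M$ to lie in a countable subset of the naive Hom-set. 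Making this rigidity precise --- plausibly via Matlis duality and the structural theorem for meager modules alluded to in the abstract --- will be the technical heart of the proof.
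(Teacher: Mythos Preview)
Your necessity direction is essentially correct and close in spirit to the paper. The graph construction $r \mapsto R'(1,r)$ for (B3) is exactly the paper's Lemma~\ref{1b}. For (B2) the paper argues differently---via the Hauptidealsatz and Baire category: $\MM$ is covered by the height-$\le 1$ primes, each closed with empty interior in the $\MM$-adic topology, so there must be uncountably many (Theorem~\ref{grande})---but your Cohen-structure route is plausible, though the ``unit-check'' that distinct $f$'s yield distinct principal ideals $(y-f(x))S$ in the overring $S \supseteq k[\![x,y]\!]$ is not as immediate as you suggest, since units of $S$ need not lie in $k[\![x,y]\!]$.

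The real gap is precisely where you locate it, and your proposed rescue does not work: a module satisfying (B2) and (B3) need not be meager, since (B3) forbids $(R/\PP)^2$ only for coheight-one $\PP$, while $M$ may freely contain $(R/\MM)^2$ for the maximal ideal $\MM$. So the structural theorem for meager modules is simply unavailable here. The paper's sufficiency argument (Proposition~\ref{carre}) bypasses the filtration-gluing problem altogether by inducting on Gulliksen's ordinal length $\ell(M)$. The key base case is $\ell(M) < \omega \cdot 2$: then every submodule of $M$ has either finite length or finite colength, and each type is countable (the former all lie in the maximal finite-length submodule, which is countable; the latter are controlled, colength by colength, by $\Hom_R(M,\mathbf{k})$). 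For the inductive step one shows that, under (B2)+(B3), the $\PP$-torsion $T_\PP = \{m \in M : \PP m = 0\}$ satisfies $\ell(T_\PP) < \omega \cdot 2$ for every associated prime $\PP$---this is exactly where (B3) enters, since $T_\PP$ is an $(R/\PP)$-module and rank $\ge 2$ would produce an $(R/\PP)^2$ subquotient. Now any nonzero submodule $N$ of $M$ has $N' := N \cap T_\PP \neq 0$ for some $\PP \in \Ass_R(M)$; there are countably many possible $N'$ by the base case, and for each $N'$ countably many possibilities for $N/N' \subset M/N'$ by induction, since $\ell(M/N') < \ell(M)$. The idea, in short, is to peel a piece of controlled ordinal length off the \emph{bottom} of each submodule, rather than to assemble submodules upward through an ambient filtration where the $\Hom$-parametrization of intermediate extensions is uncountable.
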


(To say that $\mathcal{P}$ has coheight one means that it is maximal among non-maximal prime ideals.)

These two conditions are independent; of course {\KKK} does not follow from {\KK}, and conversely we check (see Proposition \ref{deuxd}) that if $M=R$ is a UFD of Krull dimension 2 (thus failing to satisfy {\KK}), then $M$ satisfies {\KKK}. An example is $M=R=\mathbf{k}[\![X,Y]\!]$ when $\mathbf{k}$ is a field.

\begin{rem}
Properties {\LLL}, {\LL} and being minimax are three independent conditions as well:
\begin{itemize}
\item the above example $C_{p^\infty}^2$, is a minimax $\mathbf{Z}$-module (as it is artinian), satisfies {\LL} but not {\LLL};
\item a direct sum of infinitely many simple $A$-modules satisfies {\LL}, {\LLL}, but is not minimax;
\item the injective hull of the residual field of the localization $A=\mathbf{k}[X,Y]_0$ is a minimax $A$-module (as it is artinian), satisfies {\LLL} but not {\LL}. This follows from the previous example ($M=R=\mathbf{k}[\![X,Y]\!]$) by Matlis duality.
\end{itemize}
\end{rem}

When $A$ has Krull dimension $\le 1$, Property {\LL} becomes an empty condition, and thus becomes superfluous in Theorem \ref{main}, which then simplifies to an easier statement closer to that of Proposition \ref{pidcase}. 

A refinement of the problem consists in describing the topological type (and not only the cardinality) of the space $\Sub_A(M)$ of submodules of an $A$-module $M$, viewed as a closed (and thus compact) subset of the power set $2^M$. This task was carried out
\begin{itemize}
\item In \cite{Co}, restricting to finitely generated modules;
\item in \cite{CGP2}, restricting to $A=\mathbf{Z}$ but with no finite generation assumption.
\end{itemize}
It would be interesting to carry this task over for an arbitrary module over a finitely generated (commutative) ring. Theorem \ref{main} is a first step towards this direction, as well as Corollary \ref{exisola}, which says in particular that in this case, $\Sub_A(M)$ has an isolated point if and only of $M$ is minimax.

Recall that a topological space is scattered if every non-empty subset has an isolated point; for a compact Hausdorff space this is equivalent to being countable. Using refinements of the proof of Theorem \ref{main}, we can prove (Proposition \ref{nosca}) that if $\Sub_A(M)$ is scattered, then $M$ is minimax and satisfies Properties {\LL} and {\LLL}. We do not know whether the converse holds (if true, it would provide a generalization of Theorem \ref{main} without the assumption that $A$ is countable).

In view of Remark \ref{unconcase}, the question of the cardinality is, in my opinion, less interesting when $A$ is uncountable. In general, it should involve a discussion on the cardinality of simple $A$-modules. See however Theorem \ref{prop3} below. Before stating it, we need the following definition: we say that an $A$-module is {\em meager} if it does not admit $S^2$ as a subquotient for any simple $A$-module $S$.

Let us now provide a structural result for meager modules. Recall that two ideals $I,J$ of $A$ are disjoint if $I+J=A$; another terminology is ``comaximal". The following theorem shows that, in the commutative noetherian case, there are very strong restrictions on the possible structure of meager modules.

\begin{thm}\label{imeager_stru}
1) Let $A$ be a noetherian ring; let $M$ be a meager $A$-module. Then $M$ has a unique decomposition $M=\bigoplus_{\PP\in\Ass_A(M)} M(\PP)$, where $\Ass_A(M(\PP))=\{\PP\}$, and the $\PP\in\Ass_A(M)$ are pairwise disjoint. Conversely, given a subset $X$ of pairwise disjoint prime ideals of $A$ and for each $\PP\in X$ a meager $A$-module $M(\PP)$ with $\Ass_A(M(\PP))=\{\PP\}$, the direct sum $\bigoplus_{\PP\in X}M(\PP)$ is meager.

2) Let $M$ be a meager $A$-module with a single associated prime $\PP$. Then exactly one of the following holds:
\begin{enumerate}[(a)]
\item\label{imea1i} $M$ has nonzero finite length and its submodules form a chain;
\item\label{imea2i} $\PP$ is a maximal ideal, and there exists a (unique) prime ideal $\QQ$ of coheight 1 in the completed local ring $\widehat{A_\PP}$ such that $B=A_\PP/\QQ$ is a discrete valuation ring and $M$ is isomorphic to $\mathrm{Frac}(B)/B$ as an $A$-module;
\item\label{imea3i} $\PP$ has coheight 1, the quotient ring $A/\PP$ is a Dedekind domain, and $M$ is a torsion-free module of rank 1 over $A/\PP$ (or equivalently, is isomorphic to some nonzero submodule of $\mathrm{Frac}(A/\PP)$).
\end{enumerate}
Conversely, any $A$-module in one of these cases is meager with only associated ideal $\PP$.
\end{thm}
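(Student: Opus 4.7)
The plan is to prove Part 2 first and deduce Part 1 from it. For Part 2, start with the observation that $\Ass_A(M)=\{\PP\}$ forces $\Ann_A(m)\subseteq\PP$ for every nonzero $m\in M$, since $\Ass(Am)$ is nonempty and contained in $\Ass(M)$. Split into cases by whether $\PP$ is maximal. If $\PP$ is maximal, then $\sqrt{\Ann(m)}=\PP$, so each $m$ is killed by a power of $\PP$, and $M$ inherits a natural structure of module over $R=\widehat{A_\PP}$. Meagerness applied to the socle $\mathrm{Soc}(M)=\{m:\PP m=0\}$, an $A/\PP$-vector space, forces $\dim_{A/\PP}\mathrm{Soc}(M)\le 1$, giving a unique simple submodule $S$. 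Two incomparable submodules $M_1,M_2\subseteq M$ would yield the subquotient $(M_1+M_2)/(M_1\cap M_2)=M_1/(M_1\cap M_2)\oplus M_2/(M_1\cap M_2)$ whose socle contains $S\oplus S$ (every nonzero subquotient of $M$ is $\PP$-torsion, has $\PP$ as its unique associated prime, and hence has $S$ in its socle), contradicting meagerness; so $M$ is uniserial. If $M$ has finite length, we are in case (\ref{imea1i}). Otherwise the submodules of $M$ are exactly the $\{m:\PP^n m=0\}$ together with $M$ itself, so $M$ is artinian, and Matlis duality sends $M$ to a finitely generated uniserial $R$-module. Such a module is cyclic $R/\QQ$, and being uniserial of infinite length forces $R/\QQ$ to be a DVR, so $\QQ$ is a prime of coheight $1$. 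Dualizing back identifies $M\cong\mathrm{Frac}(R/\QQ)/(R/\QQ)$, case (\ref{imea2i}).

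If $\PP$ is not maximal, then $M_\PP$ is meager over $A_\PP$ with unique associated prime $\PP A_\PP$ (now maximal in $A_\PP$), so by the preceding case $M_\PP$ is uniserial. The key step---and, in my view, the main obstacle---is to show that $M_\PP$ has $A_\PP$-length exactly $1$. If the length were $\ge 2$, one constructs $A$-submodules $N_1\subseteq N_2\subseteq M$ by modifying fractional-ideal ``denominators'' at two consecutive levels of the $A_\PP$-chain of $M_\PP$ so that $N_2/N_1$ is annihilated by a maximal ideal $\MM\supsetneq\PP$ and is $2$-dimensional over $A/\MM$, i.e.\ isomorphic to $S^2$, contradicting meagerness. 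This is precisely the point where $A$-submodules of $M$ genuinely differ from $A_\PP$-submodules of $M_\PP$, since the fractional ideals of $A/\PP$ need not be principal. Once $M_\PP$ has length $1$, the injection $M\hookrightarrow M_\PP=\mathrm{Frac}(A/\PP)$ yields $\PP M=0$ and $M$ becomes a torsion-free $A/\PP$-module of rank $\le 1$ (rank $\ge 2$ is excluded by surjecting an embedded $(A/\PP)^{\oplus 2}$ onto $(A/\MM)^2=S^2$ for any maximal $\MM\supsetneq\PP$). Picking $0\ne m\in M$ gives $B:=A/\PP\hookrightarrow M$, so $B$ itself is meager; at every maximal $\MM'\subseteq B$, meagerness forces the residue-field dimension of $\MM' B_{\MM'}/(\MM')^2B_{\MM'}$ to be at most $1$, so $B_{\MM'}$ is regular of dimension $\le 1$, and combined with $\dim B\ge 1$ this makes $B$ a Dedekind domain of dimension $1$, giving case (\ref{imea3i}). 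In all three cases the resulting module is meager: localizing at each maximal ideal reduces to a uniserial situation, ruling out $S^2$ as a subquotient.

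For Part 1, first establish that distinct associated primes of $M$ are pairwise coprime. Given $\PP\ne\QQ$ in $\Ass(M)$, the embeddings $A/\PP,A/\QQ\hookrightarrow M$ have zero intersection (any common submodule would have $\Ass\subseteq\{\PP\}\cap\{\QQ\}=\emptyset$), so $A/\PP\oplus A/\QQ\hookrightarrow M$; if $\PP+\QQ$ were contained in a maximal $\MM$, surjecting both copies onto $A/\MM=S$ would give $S^2$ as subquotient, a contradiction. In particular $\Ass(M)\cap V(\PP)=\{\PP\}$ for each $\PP\in\Ass(M)$. Now define $M(\PP)=\{m\in M:\Ass(Am)\subseteq\{\PP\}\}$. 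This is a submodule: for $m_1,m_2\in M(\PP)$ the module $Am_1+Am_2$ is annihilated by $\Ann(m_1)\cdot\Ann(m_2)$, whose radical contains $\PP$, so $\Ass(Am_1+Am_2)\subseteq V(\PP)\cap\Ass(M)=\{\PP\}$. For generation, the primary decomposition of $\Ann(m)$ and Chinese Remainder produce $Am=A/\Ann(m)\cong\bigoplus_i A/Q_i$ with each $Q_i$ being $\PP_i$-primary for some $\PP_i\in\Ass(M)$, and each summand lies in the corresponding $M(\PP_i)$; hence $M=\sum_\PP M(\PP)$. Directness: an element $m\in M(\PP)\cap\sum_{\QQ\ne\PP}M(\QQ)$ has $\Ass(Am)\subseteq\{\PP\}\cap\{\QQ:\QQ\ne\PP\}=\emptyset$, forcing $m=0$. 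The converse direction---that a direct sum $\bigoplus_{\PP\in X}M(\PP)$ of meager summands with pairwise disjoint associated primes is again meager---follows by localization: at any maximal $\MM$, only the one summand with $\PP\subseteq\MM$ (if it exists) survives, and being meager it admits no $S^2$ subquotient.
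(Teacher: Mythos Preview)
Your Part~1 argument and your treatment of the case where $\PP$ is maximal are both correct and, in the latter case, arguably cleaner than the paper's. You prove uniseriality of $M$ directly from meagerness when $\PP$ is maximal (via the socle argument and the incomparable-submodule trick), then pass to the Matlis dual; the paper instead establishes the non-maximal case first and reduces the maximal infinite-length case to it via Matlis duality applied to $T(M)$ over $\widehat{A_\PP}$. Your ordering is the reverse of the paper's, and your direct route in the maximal case avoids the detour through the non-maximal classification. The Part~1 decomposition you give (via primary decomposition of $\Ann(m)$ and the Chinese Remainder Theorem) is essentially equivalent to the paper's Lemma establishing $M(\PP)=I_\PP^n M$ in the finitely generated case, just organized elementwise.

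The genuine gap is precisely where you flag it: in the non-maximal case, the assertion that $M_\PP$ has $A_\PP$-length exactly~$1$. Your sketch (``modifying fractional-ideal denominators at two consecutive levels'') is not a proof, and I do not see how to make it one without substantial additional argument. The difficulty is that at this stage you do not yet know $M$ sits inside $\mathrm{Frac}(A/\PP)$, so there are no ``fractional-ideal denominators'' to modify; the phrase presupposes the conclusion. Concretely: reducing to $M$ finitely generated, one has $\PP^n M=0$, and the successive quotients $\PP^iM/\PP^{i+1}M$ are indeed rank-one torsion-free $A/\PP$-modules, but assembling from this an explicit $(A/\MM)^2$ subquotient of $M$ requires controlling an extension of $A_\MM$-modules of the form $0\to (A/\PP)_\MM\to (N_2)_\MM\to (A/\PP)_\MM\to 0$, and meagerness only tells you $(N_2)_\MM$ is locally cyclic, which does not by itself force the extension to collapse.

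The paper closes this gap by a different mechanism: ordinal length. It shows (Lemma~\ref{mfgi}, using Lemmas~\ref{lomega} and~\ref{plusone}) that a finitely generated meager module with $\Ass=\{\PP\}$ and $\PP$ non-maximal must have ordinal length exactly $\omega$. The key point is that if $\ell(M)>\omega$, one finds a quotient $M'$ with $\ell(M')=\omega+1$; meagerness forces $\Ass(M')=\{\PP\}$ (pairwise disjointness rules out any maximal associated prime coexisting with $\PP$), but $\ell=\omega+1$ forces $M'$ to contain a simple submodule, a contradiction since $\PP$ is not maximal. This argument is short and does not require the localization gymnastics your sketch gestures at.
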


See \S\ref{s_meager}, notably Proposition \ref{assdis2} and Theorem \ref{meager_stru}, which encompass Theorem \ref{imeager_stru}.

In Case (\ref{imea1i}), $M$ is a cyclic module and can therefore been seen as an artinian local ring quotient $A/I$, which is a principal ideal ring. A result of Hungerford \cite{hun} then says that the ring $A/I$ is isomorphic to a quotient of some complete discrete valuation ring (which is not an $A$-algebra in general). In Case (\ref{imea2i}), beware $\QQ$ is not necessarily related to an ideal of coheight 1 of $A$; more precisely, the inverse image of $\QQ$ in $A$ can have coheight greater than 1.
 
We use Theorem \ref{imeager_stru} to prove the following counting result, which also goes beyond the countable case. In view of the pathological examples of Remark \ref{unconcase}, it involves a restriction on the ring that is satisfied in many cases, notably all noetherian algebras that are countably generated over an infinite field, and their localizations. 

\begin{thm}\label{prop3}
Let $A$ be a noetherian ring of cardinal $\alpha$. Assume that every quotient field of $A$ also has cardinal $\alpha$. Let $M$ be a minimax $A$-module.
Then
\begin{itemize}
	\item if $M$ is not meager, the number of its submodules is
		\begin{itemize}
   			\item $\alpha$ if $M$ satisfies {\LL} and {\LLL};
			\item $\alpha^{\aleph_0}$ if {\LL} or {\LLL} fails.
		\end{itemize}
	\item suppose that $M$ is meager:
		\begin{itemize}
			\item if $M$ has finite length, it has finitely many submodules;
			\item if $M$ has infinite length and all its associated ideals are maximal, then the number of its submodules is $\aleph_0$;
			\item otherwise, let $\beta$ be the maximum over all associated non-maximal ideals $\PP$ of $M$ of the number of maximal ideals containing $\PP$ (note that $\beta\le\alpha$); then the number of submodules of $M$ is $\max(\beta,\aleph_0)$;
		\end{itemize}
\end{itemize}
\end{thm}

Another corollary of Theorem \ref{imeager_stru} is the following characterization of uniserial modules over arbitrary noetherian rings. Recall that a module over a ring is {\em uniserial} if its submodules form a chain (i.e., is totally ordered under inclusion). It is straightforward that every uniserial module is meager with at most one associated prime ideal. We deduce:

\begin{cor}\label{uniserial}
Let $A$ be a noetherian ring and let $M$ be a nonzero $A$-module. Then $M$ is uniserial if and only if 
one of the following holds:
\begin{enumerate}[(a)]
\item $M$ has finite length; all its submodules are cyclic, and it has a single associated prime ideal $\PP$, which is maximal;
\item\label{icha2} $M$ is artinian of infinite length: for some maximal ideal $\PP$ and some prime ideal $\QQ$ of the completion $\widehat{A_\PP}$ such that $B=A_\QQ/\PP A_\QQ$ is a discrete valuation ring, the $A$-module $M$ is isomorphic to $\mathrm{Frac}(B)/B$;
\item\label{icha3} $M$ is not artinian: for some non-maximal prime ideal $\PP$ such that the quotient ring $B=A/\PP$ is a discrete valuation ring, the module $M$ is isomorphic to either $B$ or $\mathrm{Frac}(B)$ as $A$-module.
\end{enumerate}
\end{cor}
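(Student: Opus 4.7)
The plan is to reduce the statement to Theorem \ref{imeager_stru} via two preliminary observations: every uniserial module $M$ is (i) meager and (ii) has at most one associated prime. For (i), any subquotient of a uniserial module is itself uniserial (the chain of submodules restricts to the sub-interval and descends to the quotient), whereas $S^2$ is not uniserial for $S$ simple (its two coordinate copies of $S$ are incomparable), so $M$ has no such subquotient. For (ii), if $\Ann(x_i)=\PP_i$ are prime associated ideals with $Ax_1\subseteq Ax_2$ (comparable by uniseriality) and $x_1=ax_2$, then $a\notin\PP_2$ (else $x_1=0$), and from $bx_1=0\iff ba\in\PP_2$ together with primality of $\PP_2$ one deduces $\PP_1=\PP_2$.

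For nonzero uniserial $M$ one can now apply Theorem \ref{imeager_stru}(2), whose three cases will correspond to the three cases of the corollary. In case (\ref{imea1i}), $M$ has nonzero finite length, so the associated prime $\PP$ is automatically maximal ($A/\Ann(x)$ being artinian for any nonzero $x$); moreover in a uniserial module each submodule $N$ has a unique maximal proper submodule $N'$, so $N=Ay$ for any $y\in N\setminus N'$, yielding case (1). Case (\ref{imea2i}) is verbatim case (2). In case (\ref{imea3i}), $M$ embeds as a nonzero $A/\PP$-submodule of $\mathrm{Frac}(A/\PP)$ with $A/\PP$ Dedekind; if $A/\PP$ had two distinct maximal ideals $\mathfrak{m},\mathfrak{n}$, then the submodules $\mathfrak{m}M$ and $\mathfrak{n}M$ would be incomparable (invertibility of fractional ideals: $\mathfrak{m}M\subseteq\mathfrak{n}M$ implies $\mathfrak{m}\subseteq\mathfrak{n}$), so $A/\PP$ is forced to be a DVR $B$, and the nonzero submodules of $\mathrm{Frac}(B)$ are $\pi^n B\cong B$ for $n\in\mathbf{Z}$ together with $\mathrm{Frac}(B)$ itself, giving case (3).

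For the converse, each case of the corollary is directly uniserial. In case (1), $M$ being one of its own submodules is cyclic, so $M\cong A/I$; the unique associated prime being maximal plus finite length force $A/I$ to be artinian local; every ideal being principal makes $A/I$ a principal ideal ring; and a local artinian principal ideal ring is uniserial, its ideals being the powers of the maximal ideal. Cases (2) and (3) follow from the standard chain descriptions of submodules of $\mathrm{Frac}(B)/B$ and of $\mathrm{Frac}(B)$ for a DVR $B$.

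The main obstacle I anticipate is the ``if'' direction of case (1), where the hypothesis ``unique associated prime which is maximal'' is essential: without it, examples like $M=\mathbf{Z}/6$ would satisfy ``finite length, all submodules cyclic'' yet fail to be uniserial. The point is that this hypothesis forces $A/I$ to be local rather than a nontrivial product of artinian local rings, after which the classical characterization of uniseriality for local artinian principal ideal rings finishes the argument.
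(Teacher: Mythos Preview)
Your approach is the paper's: reduce to Theorem \ref{imeager_stru} via the observations that a uniserial module is meager with at most one associated prime, then run through the three cases. The paper is terser (it does not spell out the converse, and treats the finite-length case as immediate since Theorem \ref{imeager_stru}(\ref{imea1i}) already asserts that the submodules form a chain), but the strategy is identical and your extra detail on Case (1) and on the converse is welcome.

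There is, however, a small gap in your handling of Case (\ref{imea3i}). You argue that if $A/\PP$ has two distinct maximal ideals $\mathfrak{m},\mathfrak{n}$ then $\mathfrak{m}M$ and $\mathfrak{n}M$ are incomparable, invoking invertibility of fractional ideals. But Theorem \ref{imeager_stru} only gives $M$ as a torsion-free rank-one $(A/\PP)$-module, not necessarily finitely generated; $M$ need not be an invertible fractional ideal, and it may be $\mathfrak{m}$-divisible (for instance $M=(A/\PP)_{\mathfrak{m}}$, or the extreme case $M=\mathrm{Frac}(A/\PP)$), in which case $\mathfrak{m}M=M$ and your two submodules are comparable. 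The implication ``$\mathfrak{m}M\subseteq\mathfrak{n}M\Rightarrow\mathfrak{m}\subseteq\mathfrak{n}$'' genuinely fails without invertibility of $M$. The fix is immediate and is what the paper does: since $\PP\in\Ass_A(M)$, the module $M$ contains a copy of $A/\PP$, and the two maximal ideals $\mathfrak{m},\mathfrak{n}$ are already incomparable submodules there; equivalently, pick any nonzero $x\in M$ and use $\mathfrak{m}x$ and $\mathfrak{n}x$ inside the cyclic submodule $(A/\PP)x\cong A/\PP$.
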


\begin{rem}\label{rembosw}
In \cite{BOSW}, the proof that $M$ has $2^{\mathrm{card}(M)}$ distinct submodules amounts to showing that an uncountable module $M$ always possesses a submodule isomorphic to a direct sum of $\mathrm{card}(M)$ nonzero modules. When $M$ is countable, the existence of a subquotient of $M$ isomorphic to an infinite (countable) direct sum of nonzero modules is equivalent to the failure of being minimax (Theorem \ref{minidir}). This explains why minimax modules are often the most subtle case when we study the set of submodules. 
\end{rem}

As regards counting submodules, let us mention results in other directions; for instance, in the non-noetherian setting, Steprans \cite{Ste} shows that the statement ``every uncountable module $M$ over a countable commutative ring has $2^{\mathrm{card}(M)}$ submodules" is undecidable in ZFC. Also, in the non-commutative case, he provides a countable ring with a module with exactly $\aleph_1$ (the minimum possible) many submodules.

Going back to Theorem \ref{main}, there are natural questions left. A first one is what happens in the case of a non-noetherian (countable) ring $A$. In this case, even the case $M=A$ is delicate (while the case of $M$ finitely generated is trivial in case $A$ is noetherian). Understanding which finitely generated modules have countably many submodules already seems delicate, if not out of reach, in the case when $A$ is the group ring of a free $\mathbf{Z}$-module of infinite rank. On the other hand, it would be interesting to study some cases when $A$ is noetherian but non-commutative. This includes the case of group rings of virtually polycyclic groups; their study would require specific methods also going beyond the scope of this paper.

\medskip
{\bf Outline.} \S\ref{remi} includes classical preliminaries, notably addressing Matlis duality and Loewy dimension of artinian modules. Elaborations on these, including proofs, are included in \S\ref{addpre}. Results concerning finitely generated modules over complete local rings are gathered in \S\ref{ficom}. The proof of Theorem \ref{main} is concluded in \S\ref{promain}. Meager modules are addressed in \S\ref{megmo}.

\medskip

{\bf Acknowledgment.} I thank Mark Sapir for his valuable advice to improve the presentation of the paper.

%%%%%%%%%%%%%%%%%%%%%%%%%%%%%%%%%%%%%%%%%%%%%%%%%%%%%%%%%%%%%

\section{Preliminaries}\label{remi}

\subsection*{Associated ideals}
Let $A$ be a noetherian ring and $M$ an $A$-module. Recall that $\Ass_A(M)$ is defined as the set of prime ideals $\PP$ of $A$ such that $A/\PP$ embeds as a submodule of $M$; these are called associated prime ideals of $M$. It is known to be non-empty if $M\neq 0$, and finite if $M$ is noetherian.

In particular, every associated prime ideal of $M$ is maximal if and only if $M$ is {\it locally of finite length}, in the sense that every finitely generated submodule of $M$ has finite length.

\subsection*{Artinian modules}

An important class of modules is the class of artinian modules, namely in which there is no strictly decreasing sequence of submodules. They are locally of finite length.

Given an $A$-module $M$, there is, for every maximal ideal $\mathcal{M}$ of $A$, a natural homomorphism $M\to M\otimes_AA_\mathcal{M}$, giving rise to a product homomorphism $\phi:M\to \prod_\mathcal{M}M\otimes_AA_\mathcal{M}$. If $M$ is artinian, this product involves finitely many nonzero terms (because $\Ass_A(M)$ is finite and $M\otimes_AA_\mathcal{M}=\{0\}$ if $\mathcal{M}\notin\Ass_A(M)$) and $\phi$ is an isomorphism, yielding a canonical finite decomposition
\[M=\bigoplus_{\mathcal{M}\in\Ass_A(M)}M(\mathcal{M}),\quad\Ass_A(M(\mathcal{M}))=\{\mathcal{M}\}.\]
Moreover, $M(\mathcal{M})\simeq M\otimes_AA_\mathcal{M}$ is naturally a module over the completion $\widehat{A_\mathcal{M}}$. (See Lemma \ref{assdis} for an extension of this decomposition for arbitrary modules locally of finite length.)

\subsection{Krull dimension $\ge 2$}

We write for reference the following well-known consequence of the Hauptidealsatz.
\begin{lem}\label{k2}
Let $(R,\MM)$ be a noetherian local ring of Krull dimension at least two. Then for any $x\in\MM$ and $n\ge 1$, the ideal $Rx+\MM^{n+1}$ does not contain $\MM^n$. (Equivalently, the dimension of $\MM^n/\MM^{n+1}$ as vector space over $R/\MM$ is $\ge 2$.)
\end{lem}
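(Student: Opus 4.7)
The plan is to argue by contradiction, passing to the quotient $\bar R = R/Rx$ and combining Nakayama's lemma with Krull's Hauptidealsatz.

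First I would suppose that $\MM^n \subseteq Rx + \MM^{n+1}$ for some $x \in \MM$ and some $n \geq 1$. Combined with the trivial inclusion $\MM^{n+1} \subseteq \MM^n$ this forces the equality $\MM^n + Rx = \MM^{n+1} + Rx$. Setting $\bar R = R/Rx$, which is again a noetherian local ring with maximal ideal $\bar{\MM} = \MM/Rx$, the relation rewrites as $\bar{\MM}^n = \bar{\MM}^{n+1} = \bar{\MM}\cdot\bar{\MM}^n$. Since $\bar{\MM}^n$ is finitely generated and $\bar{\MM}$ is the Jacobson radical of $\bar R$, Nakayama's lemma will give $\bar{\MM}^n = 0$; hence $\bar{\MM}$ is nilpotent, $\bar R$ is artinian, and in particular $\dim \bar R = 0$. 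To close the contradiction I would then invoke Krull's Hauptidealsatz: every minimal prime $\mathfrak{p}$ of $R$ over the principal ideal $Rx$ satisfies $\mathrm{ht}(\mathfrak{p}) \leq 1$, whereas $\mathrm{ht}(\MM) = \dim R \geq 2$. Therefore $\mathfrak{p} \subsetneq \MM$ strictly, and $\mathfrak{p}/Rx$ is a non-maximal prime of $\bar R$, contradicting $\dim \bar R = 0$.

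For the parenthetical reformulation I would observe that $\dim_{R/\MM} \MM^n/\MM^{n+1} \leq 1$ is equivalent to the existence of some $y \in \MM^n \subseteq \MM$ with $\MM^n = Ry + \MM^{n+1}$, which is precisely the negation of the first assertion applied to $x = y$; so the two formulations are logically interchangeable under the lemma's hypothesis. I do not anticipate a substantial obstacle; the one point to watch carefully is the use of $\dim R \geq 2$, which enters only at the final step to guarantee via the Hauptidealsatz a prime strictly below $\MM$ (in dimension one, $\MM$ itself can be minimal over a principal ideal, and the argument would break).
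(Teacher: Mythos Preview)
Your argument is correct and is essentially the same as the paper's: pass to $R/Rx$, observe that the hypothesis forces $\bar{\MM}^n=\bar{\MM}^{n+1}$ and hence (via Nakayama) that $R/Rx$ is artinian, and then invoke the Hauptidealsatz to obtain the contradiction with $\dim R\ge 2$. The paper's proof is simply a terser version of what you wrote, and does not separately address the parenthetical reformulation.
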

\begin{proof}
If $R_1=R/xR$ and $\MM_1$ is the image of $\MM$, Krull's Hauptidealsatz implies that $R_1$ has Krull dimension at least one. But the hypotheses imply that $\MM_1^n=\MM_1^{n+1}$, which implies that $R_1$ is artinian, a contradiction.
\end{proof}

\subsection*{$\MM$-adic topology of complete local rings}

Let $(R,\MM)$ be a noetherian local ring. The $\MM$-adic topology on a finitely generated $R$-module $M$ (and in particular on $R$) is the topology induced by the ultrametric distance \[d(x,y)=\exp(-\sup\{n:x-y\in\MM^nM\}).\] This is indeed a distance (and not only a semi-distance), since Krull's intersection theorem says that $\bigcap\MM^nM=0$. 

\begin{prop}\label{isclosed}
Every submodule $N$ of a finitely generated $R$-module $M$ is closed in the $\MM$-adic topology. In particular, every ideal of $R$ is closed.
\end{prop}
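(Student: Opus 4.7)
The plan is to reduce closedness of $N$ to Krull's intersection theorem applied to the quotient module $M/N$. Closedness in the ultrametric topology defined by the balls $x+\MM^nM$ is equivalent to the condition
\[N=\bigcap_{n\ge 0}(N+\MM^nM),\]
because a point $x$ fails to have an open neighborhood disjoint from $N$ precisely when $(x+\MM^nM)\cap N\ne\emptyset$ for every $n$, i.e.\ $x\in N+\MM^nM$ for every $n$. So the first step is just to translate the topological statement into this algebraic identity.

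Next, I would observe that $M/N$ is again a finitely generated $R$-module (being a quotient of the finitely generated $M$), so Krull's intersection theorem applies to it and yields
\[\bigcap_{n\ge 0}\MM^n(M/N)=0.\]
Pulling this back along the projection $M\twoheadrightarrow M/N$, and noting that $\MM^n(M/N)$ is the image of $\MM^nM$, gives exactly $\bigcap_n(N+\MM^nM)=N$, which by the first step shows $N$ is closed. The statement about ideals is the special case $M=R$.

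The only nontrivial input is Krull's intersection theorem itself, which has already been invoked above (in the definition of the $\MM$-adic distance) to ensure $\bigcap_n\MM^nM=0$, so in fact the present proposition can be viewed as the same fact applied relatively to $N$. There is no real obstacle here; the only subtlety is making sure one is allowed to apply Krull's intersection theorem, which is why finite generation of $M$ is essential (so that $M/N$ is finitely generated).
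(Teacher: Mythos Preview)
Your proof is correct and follows exactly the same approach as the paper: reduce closedness to the identity $N=\bigcap_n(N+\MM^nM)$ and obtain this by applying Krull's intersection theorem to the finitely generated quotient $M/N$. The paper's proof is simply a one-sentence version of your argument.
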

\begin{proof}
Indeed, we have $N=\bigcap_n (N+\MM^nM)$, by Krull's intersection theorem (applied to the module $M/N$).
\end{proof}

\subsection*{Matlis duality}
Let $R$ be a complete noetherian local ring. Let $E$ be an injective hull of the residual field $\mathbf{k}$ of $R$. Then $T(-)=\Hom(-,E)$ is a contravariant functor, and $T(M)$ is called the {\em Matlis dual} of $M$. 
Matlis duality can be stated as follows. 
\begin{thm}\label{matdu}
1)If $M$ is noetherian (=finitely generated) then $T(M)$ is artinian; if $M$ is artinian then $T(M)$ is noetherian. In both cases, The canonical homomorphism $M\to T(T(M))$ is bijective.

2) Matlis duality $M\mapsto T(M)$ establishes a contravariant equivalence between the categories of noetherian and artinian $R$-modules. It restricts to a contravariant self-equivalence of the category of $R$-modules of finite length.
\end{thm}

See \cite[\S 3.2]{BH}. We deduce a more general statement in the context of minimax modules in \S\ref{matlis}.

Matlis duality is useful to describe artinian modules $M$ over a given noetherian ring $A$. Indeed, such $M$ decomposes as a finite direct sum $M=\bigoplus_{\MM\in\Ass_A(M)}M(\MM)$, and we can view $M(\MM)$ as an artinian module over the completion $\widehat{A_\MM}$. To summarize, the artinian $A$-modules correspond under Matlis duality to finitely supported families of noetherian modules over the various completions of localizations at maximal ideals of $A$.

\subsection*{Loewy dimension}
If $M=\bigoplus M_\mathcal{M}$ is an artinian $A$-module as above, its {\em Loewy dimension} is defined as the supremum over $\mathcal{M}$ of the Krull dimension of the Matlis dual of $M_\mathcal{M}$ (viewed as $\widehat{A_\mathcal{M}}$-module). It is a finite number, because the Krull dimension of any local ring is finite, see \cite[Theorem~13.3]{matsumura}.

The definition of Loewy dimension extends to any $A$-module $M$; namely its Loewy dimension is the supremum of the Loewy dimensions of all artinian quotients of $M$ (if infinite this is $\omega$). In particular, for a minimax $A$-module, it equals the Loewy dimension of $M/N$ where $N$ is any finitely generated submodule such that $M/N$ is artinian. (By convention the Loewy dimension of $\{0\}$ is $-\infty$, so if $M$ is finitely generated we need to impose $N\neq M$.) The Loewy dimension is obviously monotone with respect to taking quotients, but also with respect to taking submodules (Corollary \ref{loewydec}). Note that the Loewy dimension of an $A$-module is bounded above by the Krull dimension of $A$. 

Alternatively, it is possible to avoid the use of Matlis duality to define it. Indeed, recall that, as observed by Bass \cite{Bass}, given a noetherian local ring $R$ and a finitely generated $R$-module $M$ of Krull dimension $d$ (so $d$ is finite), then $d$ is the largest $k$ such that the ordinal $\omega^k$ has a decreasing embedding into the chain of submodules of $M$. Therefore, using Corollary \ref{cmatlis}, given a noetherian ring $A$ and a minimax $A$-module $M$, its Loewy dimension is the largest $k$ such that $\omega^k$ has an increasing embedding into the chain of submodules of $M$. 
This allows to characterize Property {\LL} of the introduction in terms of Loewy dimension, see Corollary \ref{l2cara}.

\subsection*{Ordinal length}
Let $A$ be a ring (not necessarily commutative) and $M$ a noetherian $A$-module. The ordinal length $\ell(M)$ of $M$ is defined inductively as follows
 $$\ell(M)=\sup\{\ell(N)+1\},$$ where $N$ ranges over proper quotients of $M$.
The reader can check that this definition (due to Gulliksen, see \cite{Gull}) is consistent, and extends the usual notion of length for modules of finite length; see also \S\ref{s_lomega}.  

\subsection*{Perfect sets}

Recall that a topological space is perfect if it has no isolated point. 

\begin{lem}\label{ontocantor}
Every nonempty compact Hausdorff space without isolated points, having a basis of clopen subsets, has a continuous map onto a Cantor set, and in particular contains $2^{\aleph_0}$ points.\qed
\end{lem}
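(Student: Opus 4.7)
The plan is to construct a continuous surjection $f \colon X \to \{0,1\}^\omega$ by recursively partitioning $X$ into a binary tree of non-empty clopen pieces. Specifically, I would build a family $(U_s)_{s \in \{0,1\}^{<\omega}}$ of non-empty clopen subsets of $X$, indexed by finite binary strings, such that $U_\emptyset = X$ and for each $s$ one has the disjoint decomposition $U_s = U_{s0} \sqcup U_{s1}$ with both pieces non-empty and clopen. Inductively this forces, at each level $n$, the $2^n$ sets $U_s$ with $|s|=n$ to form a clopen partition of $X$.

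The only substantive point is the recursive splitting: given a non-empty clopen $U_s$, I need to split it into two non-empty clopen pieces. Because $X$ is perfect and $U_s$ is open in $X$, the space $U_s$ has no isolated points (any point isolated in $U_s$ would be isolated in $X$, as $U_s$ is open). Hence $U_s$ contains two distinct points $x \neq y$. Since $X$ is Hausdorff with a basis of clopens (in particular zero-dimensional), there is a clopen $V \subseteq X$ with $x \in V$ and $y \notin V$; then $U_{s0} := U_s \cap V$ and $U_{s1} := U_s \setminus V$ are the desired non-empty clopen pieces.

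With the tree in hand, define $f \colon X \to \{0,1\}^\omega$ by letting $f(x)$ be the unique sequence $\alpha$ with $x \in U_{\alpha|_n}$ for all $n$; this is well-defined because the sets $U_s$ with $|s|=n$ partition $X$. The preimage of a basic clopen $\{\alpha : \alpha|_n = s\}$ equals $U_s$, so $f$ is continuous. For surjectivity, given $\alpha \in \{0,1\}^\omega$, the decreasing sequence of non-empty closed sets $U_{\alpha|_n}$ has non-empty intersection by compactness and the finite intersection property, and any point in that intersection maps to $\alpha$. Since $|\{0,1\}^\omega| = 2^{\aleph_0}$, surjectivity of $f$ immediately gives $|X| \ge 2^{\aleph_0}$.

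The construction is entirely routine and I do not foresee a genuine obstacle; the only place where each of the three hypotheses is actually used is in the splitting step (perfect $\Rightarrow$ at least two points in each $U_s$; Hausdorff + clopen basis $\Rightarrow$ separation by a clopen) and in the final surjectivity argument (compact $\Rightarrow$ nested non-empty closed sets have common point).
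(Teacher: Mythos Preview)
Your argument is correct and is exactly the standard construction one would expect here. The paper does not actually give a proof of this lemma (it is marked with \qed and the text immediately following says ``the easy proof is left to the reader''), so there is nothing to compare against; your binary-tree-of-clopens construction is precisely the routine verification the author had in mind.
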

This is well-known and the easy proof is left to the reader. 
(For a compact Hausdorff space, to have a basis of clopen subsets is equivalent to being totally disconnected. Nevertheless, we do not need this equivalence as we only use it for closed subsets of $2^X$, where $X$ is a discrete set, which have an obvious basis of clopen subsets.) It is also known that the cardinality fact holds for arbitrary perfect compact Hausdorff spaces, but this is a little harder and we do not need this here.

\section{Additional preliminaries}\label{addpre}
Throughout this section, $A$ denotes a noetherian (commutative) ring.

\subsection{Characterizations of artinian modules}

Recall that a module over a ring (associative unital, not necessarily commutative) is {\em finitely cogenerated} if it satisfies one of the following equivalent conditions (see \cite[Prop.\ 19.1]{lam} and \cite{vam}):

\begin{enumerate}
\item the socle (= submodule generated by simple submodules) of $M$ has finite length and is an essential submodule (i.e., has nonzero intersection with every nonzero submodule);
\item $M$ admits an essential submodule of finite length;
\item the intersection of every nonempty chain of nonzero submodules is nonzero;
\item $M$ is isomorphic to a submodule of an injective hull of a module of finite length.
\end{enumerate}

In this generality, it is easy to check that every artinian module is finitely cogenerated, and actually that a module is artinian if and only if all its quotients are finitely cogenerated. In the commutative noetherian case, the following result of V\'amos \cite{vam}, based on Matlis duality, holds:

\begin{thm}\label{arti}
An $A$-module $M$ is artinian if and only if it is finitely cogenerated. This holds if and only if it satisfies the following three conditions.
\begin{enumerate}[(a)]
\item\label{ar1} $M$ is locally of finite length (i.e., every associated prime ideal of $M$ is maximal);
\item\label{ar3} $M$ has finitely many associated prime ideals;
\item\label{ar2} for every maximal ideal $\MM$ of $A$, $\mathrm{Hom}(A/\MM,M)$ has finite dimension over $A/\MM$.\qed
\end{enumerate}
\end{thm}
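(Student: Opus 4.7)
The plan is to establish the cycle: artinian $\Rightarrow$ (\ref{ar1})--(\ref{ar3})--(\ref{ar2}) $\Rightarrow$ finitely cogenerated $\Rightarrow$ artinian. The last implication is the substantive step; the others amount to bookkeeping with the preliminaries already developed.

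For artinian $\Rightarrow$ the three conditions: if $M$ is artinian and $N\subseteq M$ is finitely generated, then $N$ is both artinian and, as a finitely generated module over the noetherian ring $A$, noetherian, hence of finite length; this gives (\ref{ar1}). The socle $\mathrm{Soc}(M)$, being a semisimple submodule of an artinian module, is itself artinian and therefore of finite length; it decomposes as $\bigoplus_{\MM}\Hom_A(A/\MM,M)$ over maximal ideals $\MM$, with only finitely many nonzero summands each of finite $A/\MM$-dimension, yielding (\ref{ar2}). By (\ref{ar1}), every associated prime of $M$ is maximal and arises as the annihilator of a simple submodule, so $\Ass_A(M)$ injects into the (finite) set of $\MM$ for which $\Hom_A(A/\MM,M)\neq 0$, giving (\ref{ar3}).

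Conversely, assume (\ref{ar1}),(\ref{ar3}),(\ref{ar2}). Then $\mathrm{Soc}(M)=\bigoplus_{\MM\in\Ass_A(M)}\Hom_A(A/\MM,M)$ is a finite direct sum of finite-length $A/\MM$-vector spaces, hence of finite length. To see it is essential, pick any nonzero $N\subseteq M$; then $\Ass_A(N)\subseteq\Ass_A(M)$ is nonempty and consists of maximal ideals by (\ref{ar1}), so $N$ contains a copy of some simple $A/\PP$, which necessarily lies in $\mathrm{Soc}(M)$. By the first characterization of finitely cogenerated recalled at the start of the section, $M$ is finitely cogenerated.

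For finitely cogenerated $\Rightarrow$ artinian: $M$ embeds in the injective hull $E$ of some module $L$ of finite length. Since $\mathrm{Soc}(L)$ is essential in $L$ (any nonzero submodule of a finite-length module contains a simple submodule), $E$ equals the injective hull of $\mathrm{Soc}(L)$ and is thus a finite direct sum of injective hulls $E(A/\MM)$ of residue fields. It therefore suffices to show each $E(A/\MM)$ is artinian. Every element of $E(A/\MM)$ is annihilated by some power of $\MM$, so $E(A/\MM)$ carries a natural $\widehat{A_\MM}$-module structure with the same submodule lattice as over $A$. Applying the Matlis duality framework of Section~\ref{remi} to the complete local ring $\widehat{A_\MM}$: $E(A/\MM)$ is the injective hull of the residue field, whose Matlis dual is the noetherian module $\widehat{A_\MM}$ itself; hence $E(A/\MM)$ is artinian over $\widehat{A_\MM}$, and therefore over $A$. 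This reduction to Matlis duality on the completion is the main obstacle and the step that genuinely uses the commutative noetherian hypothesis.
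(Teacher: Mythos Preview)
Your proof is correct and follows precisely the route the paper signals: the theorem is stated with a \qed and attributed to V\'amos, whose argument is ``based on Matlis duality,'' and the paper immediately remarks that conditions (\ref{ar1})--(\ref{ar3})--(\ref{ar2}) are ``clearly equivalent, in the commutative case, to the condition that the socle has finite length and is essential.'' You have written out exactly this: the easy equivalence of the three conditions with finite cogeneration, and the substantive implication finitely cogenerated $\Rightarrow$ artinian via the artinianness of $E(A/\MM)$ over $\widehat{A_\MM}$ through Matlis duality.

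One caution worth flagging: when you invoke ``the Matlis duality framework of Section~\ref{remi},'' be sure you mean only the Bruns--Herzog part (the noetherian/artinian contravariant equivalence, and specifically $\Hom_{\widehat{A_\MM}}(E,E)\cong\widehat{A_\MM}$), not the full minimax statement proved in the appendix. The appendix proof of injectivity of $M\to T^2(M)$ uses Corollary~\ref{resart}, which in turn rests on the very theorem you are proving, so citing the full package would be circular. The specific fact you need---that $E(A/\MM)$ is artinian because its $\widehat{A_\MM}$-submodules are in order-reversing bijection with ideals of the noetherian ring $\widehat{A_\MM}$---is independent of all that, so your argument stands; just make the dependence explicit.
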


Note that the conjunction of the conditions (\ref{ar1})-(\ref{ar3})-(\ref{ar2}) is clearly equivalent, in the commutative case, to the condition that the socle has finite length and is essential, which is one of the above characterizations of being finitely cogenerated.

\begin{cor}\label{resart}
Every $A$-module $M$ is residually artinian.
\end{cor}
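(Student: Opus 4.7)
The plan is to exhibit, for each nonzero $x\in M$, a submodule $N_x\subseteq M$ not containing $x$ such that $M/N_x$ is artinian; since then $\bigcap_{x\ne 0}N_x=0$, the module $M$ embeds into the product $\prod_{x\ne 0}M/N_x$ of artinian modules, which is what it means to be residually artinian.

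First, fix a nonzero $x\in M$ and apply Zorn's lemma to the poset of submodules of $M$ not containing $x$, ordered by inclusion; since any chain in this poset has its union still failing to contain $x$, Zorn provides a submodule $N=N_x$ that is maximal with respect to $x\notin N$. I would then work in the quotient $\bar M=M/N$ and let $\bar x$ denote the image of $x$.

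The key observation is that every nonzero submodule of $\bar M$ contains $\bar x$: if $P/N\subseteq\bar M$ is nonzero, then $P\supsetneq N$, so by maximality of $N$ we must have $x\in P$, hence $\bar x\in P/N$. Applied to submodules of $A\bar x$, this forces $A\bar x$ to be simple (every nonzero submodule contains $\bar x$ and therefore equals $A\bar x$), and applied to arbitrary nonzero submodules of $\bar M$ it shows that $A\bar x$ is an essential submodule of $\bar M$. Thus $\bar M$ has an essential submodule of finite length, i.e.\ is finitely cogenerated, so by Theorem \ref{arti}, $\bar M=M/N_x$ is artinian.

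The main (and only) step requiring care is the verification that $A\bar x$ is simple, but this is immediate from the maximality of $N_x$; no deep input is needed beyond Zorn's lemma and V\'amos' characterization already recorded in Theorem \ref{arti}. Intersecting over all nonzero $x$ gives $\bigcap_{x\ne 0}N_x=0$, completing the proof.
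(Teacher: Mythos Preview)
Your proof is correct and follows essentially the same approach as the paper's: pick $N$ maximal among submodules avoiding $x$ (via Zorn), observe that every nonzero submodule of $M/N$ contains $\bar x$, conclude that $M/N$ is finitely cogenerated, and invoke Theorem~\ref{arti}. Your version is in fact slightly more explicit than the paper's in verifying that $A\bar x$ is simple and essential, but the argument is the same.
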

\begin{proof}
Let $x$ be a nonzero element of $M$. Let $N$ be a maximal submodule of $N$ for the condition $x\notin N$. We have to show that $M/N$ is artinian. Indeed, in $M/N$, the cyclic submodule $(Ax+N)/N$ contains every nonzero submodule, so is finitely cogenerated, and hence Theorem \ref{arti}. (Note that in greater generality -- no commutativity -- this shows that every module is residually finitely cogenerated.)
\end{proof}

\begin{cor}\label{loewydec}
For every $A$-module $M$ and submodule $N$, every artinian quotient of $N$ embeds into an artinian quotient of $M$. In particular, the Loewy dimension of $N$ is bounded above by the Loewy dimension of $M$.
\end{cor}
\begin{proof}
We can suppose that $N$ is artinian. Let $P$ be a maximal submodule among those having zero intersection with $N$. Then $N$ is an essential submodule of $M/P$, and hence so is the socle of $N$. Thus $M/P$ is artinian.
\end{proof}

Given an $A$-module $M$, a subset $X$ of $M$ is called a {\em discriminating subset} if every nonzero submodule of $M$ contains a nonzero element of $X$. The module $M$ is called {\em finitely discriminable} if it has a finite discriminating subset. Equivalently, this means that it contains finitely many nonzero submodules $M_1,\dots,M_k$ such that every nonzero submodule of $M$ contains one of the $M_i$. The following proposition was checked by Yahya \cite{yahya} in case $A$ is the ring $\mathbf{Z}$ of integers (see also \cite[Lemma~4.1]{CGP}).

\begin{prop}\label{fd}
Every finitely discriminable $A$-module is artinian, and the converse holds if and only if every simple $A$-module is finite.
\end{prop}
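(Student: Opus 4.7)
The plan is to handle the two implications separately, using the characterization ``artinian $=$ finitely cogenerated'' packaged in Theorem \ref{arti}. First, assume $M$ is finitely discriminable with discriminating subset $\{x_1,\dots,x_k\}$, and set $M_i=Ax_i$, so that every nonzero submodule of $M$ contains some $M_i$. By Theorem \ref{arti} it suffices to verify condition (3) in the list preceding it: every nonempty chain $(N_\alpha)_{\alpha\in I}$ of nonzero submodules has nonzero intersection. Suppose for contradiction that $\bigcap_\alpha N_\alpha=0$. Then for each $i\in\{1,\dots,k\}$ there exists $\alpha_i\in I$ with $M_i\not\subseteq N_{\alpha_i}$. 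Since the finite sub-chain $\{N_{\alpha_1},\dots,N_{\alpha_k}\}$ is totally ordered, it has a smallest element $N_\beta$; then $N_\beta$ is a nonzero submodule contained in every $N_{\alpha_i}$, hence contains none of the $M_i$, contradicting finite discriminability.

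For the second assertion, assume first that every simple $A$-module is finite, and let $M$ be artinian. Theorem \ref{arti} tells us the socle $S$ of $M$ has finite length and is essential in $M$. A module of finite length whose composition factors are all finite is itself finite; hence $S$ is a finite set, and its nonzero elements form a finite discriminating subset of $M$, because every nonzero submodule of $M$ meets $S$ nontrivially by essentialness. Conversely, suppose some simple $A$-module $S=A/\MM$ is infinite, and consider $M=S\oplus S$. This is artinian (finite direct sum of simple modules), and since $\MM$ annihilates $M$, its submodule lattice coincides with the subspace lattice of a two-dimensional vector space over the infinite field $A/\MM$, which contains infinitely many one-dimensional subspaces. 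In any candidate finite discriminating family $\{M_1,\dots,M_k\}$ of nonzero submodules, $M$ itself is useless (no one-dimensional subspace contains it), so each $M_i$ must be a line; but a line $L$ contains $M_i$ only when $L=M_i$, so all but finitely many lines fail to contain any $M_i$, contradicting finite discriminability of $M$.

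The only delicate point is the chain argument in the forward implication, which relies on the fact that any finite subset of a chain has a smallest element; the remainder reduces to the socle analysis provided by Theorem \ref{arti} and to the standard lattice of subspaces of a two-dimensional vector space.
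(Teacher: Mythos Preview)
Your proof is correct, and for the converse direction (both the positive case and the counterexample $S\oplus S$) it is essentially identical to the paper's argument.

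For the forward implication, however, you take a genuinely different route. The paper verifies the three conditions (\ref{ar1})--(\ref{ar2}) of Theorem~\ref{arti} directly: from the finite discriminating set it reads off that all associated primes are maximal, that $\Ass_A(M)$ is finite, and that each $\MM$-torsion is finite. You instead go through the chain characterization of ``finitely cogenerated'' listed just before Theorem~\ref{arti}: you show that a finitely discriminable module satisfies the condition that every nonempty chain of nonzero submodules has nonzero intersection, via a clean pigeonhole on the finitely many $M_i$. This is more elementary---it never touches associated primes and in fact proves ``finitely discriminable $\Rightarrow$ finitely cogenerated'' over an arbitrary associative ring, with commutativity and noetherianity entering only at the last step through V\'amos's theorem. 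The paper's approach, by contrast, makes the structure (maximality and finiteness of associated primes, finiteness of the $\MM$-torsion) visible along the way, which is useful information in its own right even if not needed for the bare statement.
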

\begin{proof}
Suppose $M$ finitely discriminable. It easily follows that every associated prime ideal of $M$ is maximal, that $\Ass_A(M)$ is finite, and that for every maximal ideal $\MM$, the $\MM$-torsion of $M$ is finite. By Theorem \ref{arti}, this forces $M$ to be artinian.

If the module $M$ is locally of finite length, then its socle, namely the submodule $N$ generated by simple submodules, is discriminating. If $M$ is artinian then $N$ has finite length, and if we assume in addition that every simple $A$-module is finite, we deduce that $N$ is a finite discriminating subset.

Finally if $\mathbf{k}$ is an infinite simple $A$-module (and thus can be thought of as a quotient field of $A$), then $\mathbf{k}^2$ is artinian but is obviously not finitely discriminable as for every finite subset $F$ of $\mathbf{k}^2-\{0\}$ we can find a 1-dimensional $\mathbf{k}$-subspace of $\mathbf{k}^2$ disjoint from $F$.
\end{proof}

Endow the set $\Sub(M)$ of submodules of an $A$-module $M$ with the topology induced by inclusion in the compact set $2^M$. Then we also have the following corollary.

\begin{cor}\label{exisola}
Let $M$ be an $A$-module.
\begin{enumerate}
\item If $N$ is an isolated point in $\Sub(M)$, then $N$ is finitely generated and $M/N$ is artinian. In particular, if $\Sub(M)$ has an isolated point then $M$ is minimax.
\item Assuming that every simple $A$-module is finite, the converse holds: a submodule $N$ is an isolated point in $\Sub(M)$ if and only if $N$ is finitely generated and $M/N$ is artinian, and $\Sub(M)$ has a isolated point if and only $M$ is minimax.
\end{enumerate}
\end{cor}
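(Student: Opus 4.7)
The proof rests on the basic fact that the topology on $\Sub(M)\subseteq 2^M$ has a subbase of clopen sets of the form $\{N'\in\Sub(M):x\in N'\}$ and $\{N'\in\Sub(M):x\notin N'\}$ for $x\in M$. Hence $N$ is isolated in $\Sub(M)$ if and only if there exist finite subsets $F\subseteq N$ and $G\subseteq M\setminus N$ such that $N$ is the unique submodule $N'$ with $F\subseteq N'$ and $N'\cap G=\emptyset$.

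For Part (1), the plan is the following. Given such $F,G$ witnessing isolation, the submodule $N_0=AF$ generated by $F$ satisfies $N_0\subseteq N$ and hence $N_0\cap G\subseteq N\cap G=\emptyset$, so $N_0$ also lies in the basic neighborhood, forcing $N_0=N$. Thus $N$ is finitely generated. For the artinianness of $M/N$, I would use the canonical homeomorphism $\Sub(M/N)\to\{N'\in\Sub(M):N'\supseteq N\}$, $P\mapsto \pi^{-1}(P)$, where $\pi:M\to M/N$ is the projection. Since $\{N'\in\Sub(M):F\subseteq N'\}=\{N'\in\Sub(M):N'\supseteq N\}$ is an open neighborhood of $N$, the point $N$ is isolated in this subspace as well, which via the homeomorphism means that $0$ is isolated in $\Sub(M/N)$. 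Choosing a witnessing $G'\subseteq (M/N)\setminus\{0\}$, the definition of isolation of $0$ says precisely that $G'$ is a finite discriminating subset of $M/N$, so $M/N$ is finitely discriminable, and Proposition~\ref{fd} gives that $M/N$ is artinian. The ``in particular'' clause is then immediate from the definition of minimax modules.

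For Part (2), assume every simple $A$-module is finite, and suppose $N\subseteq M$ is finitely generated, say by $F$, with $M/N$ artinian. By Proposition~\ref{fd} (using the hypothesis on simple modules), $M/N$ is finitely discriminable, so there is a finite subset $G'$ of $M/N\setminus\{0\}$ meeting every nonzero submodule of $M/N$. Lift $G'$ to a finite subset $G\subseteq M$. Using the same homeomorphism $\Sub(M/N)\cong\{N'\supseteq N\}$ together with the elementary observation that, for $N'\supseteq N$, one has $N'\cap G=\emptyset$ if and only if $(N'/N)\cap G'=\emptyset$, the basic open set $\{N'\in\Sub(M):F\subseteq N',\,N'\cap G=\emptyset\}$ reduces to $\{N\}$. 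Hence $N$ is isolated. Combined with Part (1) and the definition of minimax, this yields both equivalences in (2).

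I do not expect a real obstacle: the only delicate point is the careful matching of basic neighborhoods of $N$ in $\Sub(M)$ with basic neighborhoods of $0$ in $\Sub(M/N)$ under the quotient homeomorphism, which must be handled so that the lifting of the discriminating set remains disjoint from all $N'\supseteq N$ intersecting $G$ non-trivially modulo $N$. Once this bookkeeping is done, both directions reduce immediately to Proposition~\ref{fd}.
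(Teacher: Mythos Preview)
Your proof is correct and follows exactly the same approach as the paper, which simply asserts that ``it is straightforward that $N$ is isolated in $\Sub(M)$ if and only if $N$ is finitely generated and $M/N$ is finitely discriminable'' and then invokes Proposition~\ref{fd}. You have carefully unpacked that ``straightforward'' equivalence via the subbase description and the quotient homeomorphism $\Sub(M/N)\cong\{N'\supseteq N\}$, which is precisely what the paper leaves implicit.
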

\begin{proof}
It is straightforward that $N$ is isolated in $\mathrm{Sub}(M)$ if and only if $N$ is finitely generated and $M/N$ is finitely discriminable. So the result follows from Proposition \ref{fd}.
\end{proof}

\subsection{Minimax modules}\label{smx}

We start with the easy implications in Theorem \ref{minidir}.

\begin{fact}The implications (\ref{imini})$\Rightarrow$(\ref{ichain})$\Rightarrow$(\ref{ichainr})$\Rightarrow$(\ref{i2z})$\Rightarrow$(\ref{idire})$\Rightarrow$(\ref{idires}) and (\ref{iomegan})$\Rightarrow$(\ref{ichain}) hold (for modules over an arbitrary associative unital ring).
\end{fact}
\begin{proof}
(\ref{idire})$\Rightarrow$(\ref{idires}) is trivial.

(\ref{i2z})$\Rightarrow$(\ref{idire}) is done by contraposition: passing to a subquotient we can suppose that $M=\bigoplus_{n\in\mathbf{Z}}M_n$ with $M_n\neq 0$ and we map $I\subset\mathbf{Z}$ to the partial sum $\bigoplus_{n\in I}M_n$.

(\ref{ichainr})$\Rightarrow$(\ref{i2z}), done by contraposition, follows from the observation that the poset $(\mathbf{R},\le)$ embeds as a subposet of $(2^{\mathbf{Q}},\subseteq)$, mapping $r$ to $[r,+\infty\mathclose[\cap\mathbf{Q}$.

(\ref{ichain})$\Rightarrow$(\ref{ichainr}) is trivial (and actually its converse (\ref{ichainr})$\Rightarrow$(\ref{ichain}) holds in a wide generality, because the poset of submodules is complete).

(\ref{iomegan})$\Rightarrow$(\ref{ichain}) is trivial since the ordered set $(\mathbf{Q},\le)$ contains copies of $\omega^n$ for all $n$.

(\ref{imini})$\Rightarrow$(\ref{ichain}) Let by contradiction $(M_q)_{q\in\mathbf{Q}}$ be such a chain. Let $N$ be a noetherian submodule of $M$ such that $M/N$ is artinian. Since $N$ is noetherian, there exists a rational $q$ such that for all $r\ge q$ we have $M_q\cap N=M_r\cap N$. Since $M/N$ is artinian, there exists a rational $r>q$ such that for all rational $s$ with $q<s<r$ we have $M_s+N=M_r+N$. It follows that $M_s=M_r$ for all such $s$, contradicting the injectivity of $r\mapsto M_r$.
\end{proof}

\begin{prop}
Implication (\ref{imini})$\Rightarrow$(\ref{iomegan}) of Theorem \ref{minidir} holds.
\end{prop}
\begin{proof}
(\ref{imini})$\Rightarrow$(\ref{iomegan}) Suppose that $M$ is minimax. Let $N$ be a noetherian submodule such that $M/N$ is artinian and define $n$ so that $n-1$ is the Loewy dimension of $M/N$. Consider a strictly increasing chain $(M_i)_{i\in\omega^n}$ of submodules. Since $N$ is noetherian, $(M_i\cap N)$ is stationary and since any nonempty right segment of $\omega^n$ contains a copy of $\omega^n$, we can assume that $M_i\cap N$ is equal, for all $i$, to a single submodule $P$ of $N$. Hence, since for $i<j$ $M_i\subsetneq M_j$, we also have $M'_i\subsetneq M'_j$, where $M'_i$ is the projection of $M_i$ in $M/N$. This reduces to the case when $M$ is artinian, as we suppose now.

The artinian module $M$ decomposes canonically as a direct product of artinian modules with a single associated prime ideal (which is maximal), and hence the same argument shows that at least one of this summands, say $N$, with associated ideal $\MM$, contains a chain of submodules isomorphic to $\omega^n$. Viewing $N$ as $R$-module, for $R=\widehat{A_\MM}$, we can apply Matlis duality and hence the Matlis dual, as a finitely generated $R$-module, has Krull dimension $\le n-1$. Then it is a classical result of Bass \cite[Theorem 2.12]{Bass} that it cannot have a chain of submodules reverse-isomorphic to $\omega^n$.
\end{proof}

Recall that an $A$-module is semisimple if it is isomorphic to a (possibly infinite) direct sum of simple modules. Thus the negation of (\ref{idires}) in Theorem \ref{minidir} precisely means the existence of a semisimple quotient of infinite length. It is easily seen that a module $M$ is semisimple if and only if it is locally of finite length, and for every maximal ideal $\MM$ and $x\in M$, $\MM^2x=0$ implies $\MM x=0$. It follows that an increasing union of semisimple modules is semisimple (this latter fact actually holds in a non-commutative context, see \cite[Theorem 2.4]{lam}).

We can now conclude the proof of Theorem \ref{minidir}.

\begin{prop}\label{exiqtsemi}
The implication (\ref{idires})$\Rightarrow$(\ref{imini}) in Theorem \ref{minidir} holds: for every (commutative) noetherian ring and non-minimax $A$-module $M$, there exists an infinitely generated semisimple subquotient of $M$.
\end{prop}
\begin{proof}
Let $M$ be a non-minimax $A$-module. We will distinguish two cases. First assume that $M$ has a finitely generated submodule $N$ such that $M/N$ is locally of finite length. Then since $M$ is not minimax, $M/N$ is also not minimax. So either (\ref{ar3}) or (\ref{ar2}) of Theorem \ref{arti} fails for $M/N$, and this implies that $M/N$ admits an infinite direct sum of nonzero modules as submodule. 

Now let us treat the last case, namely when $M$ has no finitely generated submodule such that the quotient is locally of finite length.

We claim the following: for every pair of submodules $(N,P)$ of $M$ such that $N\subset P$, the quotient $P/N$ is semisimple, and $M/N$ is not locally of finite length, there exists submodules $(N',P')$ such that $P\subset P'$, $N'\cap P=N$, $P'/N'$ is semisimple, and such that the canonical injection $P/N\to P'/N'$ is not surjective (i.e., $P'\neq N'+P$), and such that $P'/P$ and $N'/N$ are cyclic. 

Indeed to prove this, we can suppose that $N=0$. By assumption, there exists a cyclic submodule $W$ of $M$ such that $W$ is isomorphic to $A/\PP$; since $\PP$ is non-maximal, we have $W\cap P=0$. Let $N'$ be a maximal proper submodule of $W$ and define $P'=P\oplus W$. So both $N'$ and $P'/P$ are cyclic, $P'/N'\simeq P\oplus W/N'$ is semisimple and strictly contains $P$. This proves the claim.

Now let us prove the result. Suppose that $M$ has no finitely generated submodule such that the quotient is locally of finite length.

Let us define a sequence of pairs of submodules $(N_n,P_n)$, with each $P_n$ a finitely generated submodule containing $N_n$, and the embedding $P_n/N_n\to P_{n+1}/N_{n+1}$ being non-surjective for all $n$. Start with $N_0=P_0=0$. Suppose that it is constructed until $n$. 
By assumption, $M/N_n$ is not locally of finite length. Hence we can apply the above claim, and obtain a pair $(N_{n+1},P_{n+1})$ such that $N_{n+1}/N_n$, $P_{n+1}/P_n$ are cyclic and the inclusion between quotients $P_n/N_n\to P_{n+1}/N_{n+1}$ is non-surjective. Once this sequence is defined, defining $P=\bigcup P_n$ and $N=\bigcup N_n$, we have the subquotient $P/N$ of $M$ semisimple of infinite length, which is an infinite direct sum of nonzero submodules.
\end{proof}

\subsection{Matlis duality for minimax modules}\label{matlis}

Let $R$ be a complete noetherian local ring. Matlis duality is stated in \cite[Theorem 3.2.13]{BH}, summarized in Theorem \ref{matdu}, as a contravariant equivalence of categories between noetherian and artinian modules over $R$. It sounds natural to extend it to the more symmetric context of minimax modules, and this can be done at little cost. In addition, we check that this is the largest setting in which such a duality holds. 

We denote by $E$ an injective hull of the residual field of $R$, and $T(M)=\mathrm{Hom}(M,E)$ is the Matlis dual of $M$. 

\begin{thm}\label{minimatlis}
For every $R$-module $M$, the canonical homomorphism $M\to T(T(M))$ is injective. It is surjective if and only if $M$ is minimax.
\end{thm}

\begin{cor}
The functor $M\mapsto T(M)$ is a contravariant self-equivalence of the category of minimax $R$-modules.
\end{cor}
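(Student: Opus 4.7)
The plan is to derive the corollary directly from Theorem \ref{minimatlis} together with the already-known contravariant equivalence between noetherian and artinian $R$-modules recalled from \cite{BH}. There are three points to check: that $T$ restricts to an endofunctor of the category of minimax $R$-modules, that $T\circ T$ is naturally isomorphic to the identity on this category, and that these two facts formally imply a contravariant self-equivalence.

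First I would verify that $T(M)$ is minimax whenever $M$ is. Since $E$ is injective, $T$ is an exact contravariant functor. Applied to a short exact sequence $0\to N\to M\to M/N\to 0$ with $N$ noetherian and $M/N$ artinian, it yields $0\to T(M/N)\to T(M)\to T(N)\to 0$. The known Matlis equivalence between noetherian and artinian $R$-modules tells us that $T(M/N)$ is noetherian and $T(N)$ is artinian; so $T(M)$ has a noetherian submodule with artinian quotient, i.e.\ is minimax.

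Second, Theorem \ref{minimatlis} furnishes, for every minimax $M$, an isomorphism $\eta_M:M\to T(T(M))$, which is natural in $M$ by construction of the canonical biduality map. Together with the first step, this gives a natural isomorphism $\eta:\mathrm{id}\xrightarrow{\sim} T\circ T$ on the category of minimax modules. From this the equivalence follows in the usual formal way: essential surjectivity is immediate since any minimax $M$ is isomorphic to $T(T(M))$, and $T$ is fully faithful on morphisms because the map $\mathrm{Hom}(M,N)\to\mathrm{Hom}(T(N),T(M))$ induced by $T$ admits the two-sided inverse $g\mapsto \eta_N^{-1}\circ T(g)\circ\eta_M$, the naturality of $\eta$ ensuring that this inverts $T$ on both sides.

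No serious obstacle is expected here: all of the real content is already packaged in Theorem \ref{minimatlis}, and the corollary is essentially a formal consequence of the involution $T\circ T\cong\mathrm{id}$ on the minimax subcategory, together with exactness of $T$ to keep us inside that subcategory.
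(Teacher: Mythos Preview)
Your proposal is correct and follows essentially the same route as the paper: both use exactness of $T$ together with the noetherian/artinian exchange to see that $T$ preserves the minimax property, and then invoke Theorem \ref{minimatlis} for the biduality isomorphism. The paper's proof is a two-line sketch of exactly this, and your version simply spells out the formal steps (essential surjectivity and full faithfulness) more explicitly.
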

\begin{proof}
By exactness and since it exchanges noetherian and artinian modules, it maps minimax modules to minimax modules. So the corollary follows from Theorem \ref{minimatlis}.
\end{proof}

\begin{cor}\label{cmatlis}
Let $M$ be a minimax $R$-module. Then the map $\zeta_M:\Sub_R(M)\to\Sub_R(T(M))$, mapping $N$ to $\{f\in T(M):f(N)=0\}$ is a bijection, and is an isomorphism of posets $(\Sub_R(M),\subseteq)\to (\Sub_R(T(M)),\supseteq)$.
\end{cor}

See Lemma \ref{phicont} for a continuity statement (in a particular case).

\begin{proof}[Proof of Theorem \ref{minimatlis}: injectivity]
Let $x$ be a nonzero element of $M$. Then by Corollary \ref{resart}, there exists an artinian quotient $N$ of $M$ in which $x$ has a nonzero image $y$. By Matlis duality for $N$, there exists a homomorphism $N\to E$ that is nonzero on $y$. By composition, we deduce a homomorphism $M\to E$ that is nonzero on $x$; this precisely means that $x$ is not in the kernel of $M\to T(T(M))$ and proves the injectivity.
\end{proof}

\begin{lem}\label{surjt}
The class of $R$-modules $M$ for which $M\to T^2(M)=T(T(M))$ is surjective is stable under taking submodules, quotient modules and extensions.
\end{lem}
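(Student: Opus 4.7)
The plan is to exploit the fact that $T$ is exact (since $E$ is injective), hence $T^2$ is exact, together with the injectivity of the natural transformation $\eta_M : M \to T^2(M)$ established in the preceding part.

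Given a short exact sequence $0 \to M' \to M \to M'' \to 0$, the naturality of $\eta$ combined with the exactness of $T^2$ yields a commutative diagram with exact rows
\[
\begin{array}{ccccccccc}
0 & \to & M' & \to & M & \to & M'' & \to & 0 \\
& & \downarrow\eta_{M'} & & \downarrow\eta_M & & \downarrow\eta_{M''} & & \\
0 & \to & T^2(M') & \to & T^2(M) & \to & T^2(M'') & \to & 0.
\end{array}
\]
By the injectivity half of Theorem \ref{minimatlis}, all three vertical maps have zero kernel. Applying the snake lemma, the connecting homomorphism vanishes, and we obtain a short exact sequence of cokernels
\[
0 \to \mathrm{coker}(\eta_{M'}) \to \mathrm{coker}(\eta_M) \to \mathrm{coker}(\eta_{M''}) \to 0.
\]

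All three stability statements follow at once from this sequence. If $\eta_M$ is surjective, i.e.\ $\mathrm{coker}(\eta_M)=0$, then $\mathrm{coker}(\eta_{M'})$ embeds into it and $\mathrm{coker}(\eta_{M''})$ is a quotient of it, so both vanish; this handles submodules and quotients. Conversely, if $\mathrm{coker}(\eta_{M'}) = \mathrm{coker}(\eta_{M''}) = 0$, exactness forces $\mathrm{coker}(\eta_M)=0$, handling extensions.

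There is no real obstacle here beyond verifying the formal ingredients: the exactness of $T$ comes from injectivity of $E$, the naturality of $\eta$ is standard for the biduality natural transformation $\mathrm{id} \Rightarrow \Hom(\Hom(-,E),E)$, and injectivity of each $\eta_M$ was just proved. The snake lemma then does all the work in one stroke, yielding the three stability properties simultaneously.
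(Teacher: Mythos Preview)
Your proof is correct and rests on the same ingredients as the paper's: exactness of $T^2$, naturality of $\eta$, and injectivity of every $\eta_M$. The paper handles the three cases separately---a direct chase for extensions, a composite-map argument for quotients, and for submodules an explicit element argument that is exactly the vanishing of the snake-lemma connecting map---whereas you invoke the snake lemma once to obtain the short exact sequence of cokernels and read off all three stabilities simultaneously; the content is identical, your packaging is cleaner.
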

\begin{proof}Let $N$ be a submodule of $M$ with $Q=M/N$. Since $E$ is an injective module, $T$ is an exact functor and so is $T^2$. Then we have the commutative diagram, with exact rows
$$\xymatrix{
0\ar[r] & N \ar@{->}[d] \ar[r] & M \ar[d]\ar[r] & Q \ar@{->}[d]\ar[r] &0\\
0 \ar[r] & T^2(N) \ar[r] & T^2(M)\ar[r]& T^2(Q) \ar[r] &0}.$$

1) Let us start with extensions. If $N\to T^2(N)$ and $Q\to T^2(Q)$ are both surjective, it follows from the above diagram that the middle downwards arrow $M\to T^2(M)$ is also surjective.

2) Now suppose that $M\to T^2(M)$ is surjective. Then the composite map $M\to Q\to T^2(Q)$ is surjective, so $Q\to T^2(Q)$ is surjective as well.

Let us show that $N\to T^2(N)$ is surjective as well. Let $f$ be an element of $T(T(N))$. Its image $f'$ in $T(T(M)$ is the homomorphism $T(M)\to E$ mapping $\phi$ to $f(\phi\circ i)$, where $i$ is the injection $N\to M$. The surjectivity for $f$ says that there exists $m\in M$ such that $f'(\phi)=\phi(m)$ for all $m\in M$. 
We have to prove that $m$ belongs to the image of $i$. Assuming otherwise, the injectivity of $(M/N)\to T^2(M/N)$ implies that there exists $\phi\in T(M)$ that vanishes on $N$ and such that $\phi(m)\neq 0$. Then $f'(\phi)=\phi(m)\neq 0$ on the one hand, and $f'(\phi)=f(\phi\circ i)=0$ on the other hand, a contradiction.
\end{proof}

\begin{proof}[Proof of the surjectivity statements in Theorem \ref{minimatlis}]
If $M$ is minimax, we use the extension stability of Lemma \ref{surjt}, and the surjectivity of $M\to T^2(M)$ follows from the noetherian and artinian cases, established in \cite[Theorem 3.2.13]{BH}. 

Conversely, if $M$ is not minimax, then $M$ admits, by Proposition \ref{exiqtsemi}, an infinite direct sum of nonzero modules as subquotient, and hence admits $V=\mathbf{k}^{(\mathbf{N})}$ as subquotient, where $\mathbf{k}$ is the residual field. Then $T(V)\simeq \mathbf{k}^{\mathbf{N}}\simeq \mathbf{k}^{(c)}$ (where we write $c=2^{\aleph_0}$), so $T^2(V)\simeq \mathbf{k}^{c}\simeq \mathbf{k}^{\left(2^{c}\right)}$ is not even isomorphic to $V$. It follows from Lemma \ref{surjt} (stability under quotients and submodules of the surjectivity property) that $M\to T^2(M)$ is not surjective.
\end{proof}

\begin{proof}[Proof of Corollary \ref{cmatlis}]
By Matlis duality, we identify $T(T(M))$ with $M$, under this identification, for $N\in\Sub_R(M)$, we have
\[\zeta_{T(M)}\circ\zeta_M(N)=\{x\in M:\forall f\in T(M),\;f(N)=0\Rightarrow f(x)=0\};\]
it clearly contains $N$; conversely if $x\notin N$ then since $x$ is not in the kernel of $M\to T(T(M/N))$, we see that $x\notin \zeta_{T(M)}\circ\zeta_M(N)$. Hence $\zeta_{T(M)}\circ\zeta_M$ is the identity of $\Sub_R(M)$. Applying this to $T(M)$, $\zeta_{M}\circ\zeta_{T(M)}$ is the identity of $\Sub_R(T(M))$. So $\zeta_M$ is a bijection, and it is clearly order-reversing as well as its inverse $\zeta_{T(M)}$.
\end{proof}

\subsection{Property {\LL} and Loewy dimension}

\begin{proof}[Proof of Proposition \ref{l2intro}]
(\ref{l2_2})$\Rightarrow$(\ref{l2_1}) If $\textnormal{Sub}_A(M)$ contains a chain isomorphic to $\omega^2$ and $M$ has a (finite) composition series with subquotients $M_i$, then for at least one $i$, $\textnormal{Sub}_A(M_i)$ also contains a chain isomorphic to $\omega^2$. Since clearly the modules in (\ref{l2_2}) have no such chains, we deduce that (\ref{l2_2}) implies (\ref{l2_1}).

(\ref{l2_1})$\Rightarrow$(\ref{l2_3}) is immediate.

(\ref{l2_3})$\Rightarrow$(\ref{l2_2}): first, assuming (\ref{l2_3}), we know that $M$ is minimax by Theorem \ref{minidir} (which was proved in \S\ref{smx}). Thus define $M_0$ as a finitely generated submodule such that $M/M_0$ is artinian. We define, by induction, a sequence of submodules $M_i$ (containing $M_0$) as follows. Assuming that $M_i$ is defined, if $M/M_i$ has finite length, define $M_{i+1}=M$; otherwise, $M_{i+1}$ is chosen to be minimal among submodules containing $M_i$ such that $M_{i+1}/M_i$ has infinite length; since $M/M_i$ is artinian of infinite length, this does exist. If $M_i\neq M$ for all $i$, each $M_{i+1}/M_i$ has infinite length and from this sequence we can interpolate to obtain an embedding of $\omega^2$ in the poset $\textnormal{Sub}_A(M)$, a contradiction. So $M=M_i$ for some $i$, which is precisely what we need to obtain (\ref{l2_3}).
\end{proof}

We can now formulate an additional characterization.

\begin{prop}\label{addl2}
Let $M$ be an $A$-module. Then $A$ satisfies (\ref{l2_2}) of Proposition \ref{l2intro} if and only if it satisfies (iv): $M$ is minimax of Loewy dimension $\le 1$.
\end{prop}
\begin{proof}
It is clear from the definition that having Loewy dimension $\le\kappa$ is stable under taking extensions. Hence, to prove (\ref{l2_2})$\Rightarrow$(iv), it is enough to show that whenever $M$ is either finitely generated or has all its proper submodules of finite length, then $M$ is minimax of Loewy dimension $\le 1$. That $M$ is then minimax is clear (since it is then finitely generated or artinian). If $M$ is finitely generated, then its Loewy dimension is $\le 0$. The remaining case is when $M$ has infinite length with all its proper submodules of finite length, and in particular is artinian. It follows that $M$ does not split as a direct product, and hence has a single associated ideal $\mathfrak{M}$. The Matlis dual of $M$ is then a finitely generated $\widehat{A_\mathfrak{M}}$-module of infinite length, with all its proper quotients of finite length, and then has Krull dimension 1. So the Loewy dimension of $M$ is 1.

Conversely, let us prove (iv)$\Rightarrow$(\ref{l2_2}). Suppose that $M$ is minimax of Loewy dimension $\le 1$, and let us show that it satisfies (\ref{l2_2}). Since (\ref{l2_2}) is stable under taking extension, we can suppose that $M$ is either finitely generated (this case being clear), or is artinian with a single associated ideal $\mathfrak{M}$, as we now assume. The Matlis dual of $M$, as an $\widehat{A_\mathfrak{M}}$-module, is finitely generated of Krull dimension $\le 1$, and hence has a finite composition series with each successive quotient $Q_i$ being either of finite length or isomorphic to some module $\widehat{A_\mathfrak{M}}/\mathcal{P}_i$ for some prime ideal $\mathcal{P}_i$ of $\widehat{A_\mathfrak{M}}$ of coheight 1. In particular, every proper quotient of $Q_i$ has finite length. This yields a composition series of $M$, as an $A$-module, in which each successive quotient has all its proper submodules of finite length.
\end{proof}

\begin{cor}\label{l2cara}
Let $A$ be a noetherian ring and $M$ an $A$-module. Then $M$ satisfies Property {$(\mathsf{L}_2)$} if and only if $M$ has Loewy dimension $\le 1$.
\end{cor}
\begin{proof}
Since each condition only depends on artinian quotients of $M$, this follows from the case when $M$ is artinian, which follows from Propositions \ref{l2intro} and \ref{addl2} (namely, the resulting equivalence between (\ref{l2_1}) and (iv)).
\end{proof}

\begin{rem}\label{noquo2}
Let $M$ fail to satisfy Property {\LL}. By definition, it has an artinian quotient $N$ without Property {\LL}. Using an associated ideal and Matlis duality, this means that there exists $d\ge 2$ and a quotient $P$ of $N$ with the following property: $P$ has Loewy dimension $d$ as well as all its nonzero quotients, while all proper submodules of $P$ have Loewy dimension $<d$. In particular, if $d\ge 3$, $P$ has no quotient of Loewy dimension 2. \end{rem}

\subsection{Ordinal length $\omega$}\label{s_lomega}

\begin{lem}\label{lomega}
Let $M$ be a finitely generated $A$-module. Equivalences:
\begin{enumerate}[(i)]
\item\label{ii1} the ordinal length of $M$ is equal to $\omega$;
\item\label{ii2} $M$ has a unique associated ideal $\PP$, which has coheight one, and $M$ is a torsion-free $A/\PP$-module of rank one.
\end{enumerate}
\end{lem}
\begin{proof}
Suppose (\ref{ii2}). Clearly $\ell(M)\ge\omega$. If $N$ is a nonzero submodule, then since $M$ is torsion-free of rank 1, it follows that every associated ideal of $M/N$ strictly contains $\PP$, so $M/N$ has finite length. So $\ell(M)\le\omega$ and thus $\ell(M)=\omega$.

Conversely suppose $\ell(M)=\omega$. Since $M$ has infinite length, it has a non-maximal associated prime ideal $\PP$. Since every proper quotient of $M$ has finite length, it is clear that $\PP$ has coheight one. Moreover, if by contradiction $\mathcal{Q}$ is another associated prime ideal, then $M$ contains a copy $N$ of $A/\mathcal{Q}$ and $M/N$ is a proper quotient of $M$ containing a copy of $A/\PP$ and thus of infinite length, contradicting that $\ell(M)=\omega$. So $\PP$ is the only associated ideal.

Now $A/\PP$ embeds into $M$, so taking the tensor product by $A_\PP$, we see that $A_\PP/\PP A_\PP$ embeds into $M_\PP=M\otimes A_\PP$, thus $M_\PP$ is nonzero, showing by Nakayama's Lemma that $M_\PP/\PP A_\PP M_\PP\neq 0$. Since $M_\PP/\PP A_\PP M_\PP=(M/\PP M)\otimes A_\PP$, we deduce that $M/\PP M$ has an associated ideal contained in $\PP$, and thus has infinite length. Since $\ell(M)=\omega$, it follows that $\PP M=0$, i.e.\ $M$ is an $A/\PP$-module. Given again that $\ell(M)=\omega$, it is now immediate that it is torsion-free of rank one.
\end{proof}

\begin{lem}\label{plusone}
Let $M$ be a finitely generated $A$-module. Equivalences:
\begin{enumerate}[(i)]
\item\label{iii1} the ordinal length of $M$ is equal to $\omega+1$;
\item\label{iii2} $M$ has a simple submodule $N$ such that $\ell(M/N)=\omega$.
\end{enumerate}
Moreover, in (\ref{iii2}), $N$ is equal to the socle of $M$ (and hence is unique).
\end{lem}
\begin{proof}
Note that this statement and the argument below is very general (e.g., for a noetherian module over an arbitrary associative ring).

Suppose (\ref{iii2}). Since $\ell(M/N)\ge\omega$ and $N\neq 0$, we obtain $\ell(M)\ge\omega+1$. If $P$ is a submodule of $M$ and is not contained in $N$, we have $0\to N/(N\cap P)\to M/P\to M/(N+P)\to 0$, so $\ell(M/P)<\omega$, while $\ell(M/N)=\omega$, so $\ell(M/N)\le\omega+1$.

Suppose (\ref{iii1}). There exists $N$ such that $\ell(M/N)=\omega$. Then $N\neq 0$. If $N$ is not simple, say has the nonzero proper submodule $N'$, then $\ell(M)>\ell(M/N')>\ell(M/N)$ and hence $\ell(M)\ge\omega+2$, a contradiction, so $N$ is simple. 

If $N'\neq N$ is another simple submodule, then the image of $N'$ in $M/N$ is nonzero, so $\ell(M/(N\oplus N'))<\ell(M/N)=\omega$. So $M/(N\oplus N')$ has finite length, and $N\oplus N'$ has length 2; hence $M$ has finite length, a contradiction.
\end{proof}

\subsection{Artinian length}

Let $M$ be an artinian $A$-module (for the definition, $A$ need not be commutative). The artinian length of $M$ is defined inductively as $\mathcal{L}(M)=\sup\{\mathcal{L}(N)+1\}$, where $N$ ranges over proper submodules of $M$ (with $\sup\emptyset=0$). 

If finite, this coincides with the usual length. 
Note that, by definition, $\mathcal{L}(M)=\omega$ precisely means that $M$ has infinite length while all its proper submodules have finite length.

\begin{prop}
Let $A$ be a ring (possibly non-commutative) and $M$ an artinian $A$-module. Then
\begin{enumerate}
\item\label{maxchain} every chain of submodules of $M$ has ordinal type $\le\mathcal{L}(M)$;
\item\label{chainmax} conversely, if $\mathcal{L}(M)$ is countable, then there exists a chain of submodules of $M$ of ordinal type $\mathcal{L}(M)$.
\end{enumerate}
\end{prop}
\begin{proof}
The first part, (\ref{maxchain}), is proved by a straightforward induction. The more subtle part, (\ref{chainmax}), is an immediate adaptation of the dual statement for ordinal length \cite[Prop.\ 2.12]{Gull}.
\end{proof}

\begin{prop}
Let $A$ be a (commutative) noetherian ring and $M$ an artinian $A$-module.
We have $\omega^n\le \mathcal{L}(M)<\omega^{n+1}$, where $n$ is the Loewy dimension of $M$.
\end{prop}
\begin{proof}
If $M$ has a single associated prime ideal, this follows from the corresponding result of Gulliksen \cite[Theorem 2.3]{Gull} for Krull dimension and descending chains, and Corollary \ref{cmatlis}. In general $M$ is a finite product of such modules and the result follows easily.
\end{proof}

\begin{prop}
Let $A$ be a (commutative) noetherian ring and $M$ an $A$-module. The following are equivalent:
\begin{enumerate}[(i)]
\item\label{PPdef} $M\simeq P\oplus P$ for some artinian $A$-module $P$ of length $\omega$;
\item\label{PPcar} $M$ is artinian of artinian length $\le \omega+\omega$ and with 3 submodules $M_1,M_2,M_3$ each of artinian length $\ge\omega$ such that $M_i\cap M_j=\{0\}$ for all $1\le i<j\le 3$.
\end{enumerate}
\end{prop}
\begin{proof}
The forward direction is clear. Conversely, assume that (\ref{PPcar}) holds. Passing to submodules, we can assume that the artinian length of $M_i$ is exactly $\omega$ for all $i$. Then $M_1\oplus M_2$ has artinian length $\omega+\omega$, and hence is equal to $M$. For $i=1,2$, the projection of $M=M_1\oplus M_2\to M_i$ is injective on $M_3$, and hence its image has artinian length $\omega$; thus by definition of artinian length of $M_i$, this projection is surjective in restriction to $M_3$. Hence $M_3$ is the graph of an isomorphism $M_1\to M_2$, proving (\ref{PPdef}).
\end{proof}

This proposition allows to characterize Property {\LLL} purely in terms of the poset of submodules.
For convenience, we use the following abbreviation. Consider a poset $(X,\le)$ with a minimal element $o$, and denote $[a,b]=\{c:a\le c\le b\}$. We say that $X$ satisfies Property $(\sharp)$ if $X$ includes no chain of ordinal type $>\omega+\omega$, and there are $x_1,x_2,x_3\in X$ such that $[o,x_i]\cap [o,x_j]=\{o\}$ for all $1\le i<j\le 3$ such that $X_i=\{x\in X:x\le x_i\}$ is non-noetherian for all $i$.  

\begin{cor}\label{posetlll}
Let $A$ be a (commutative) noetherian ring and $M$ an $A$-module. Then 
\begin{enumerate}
\item $M$ is isomorphic to $P\oplus P$ for some artinian $A$-module $P$ of artinian length $\omega$ if and only if $X=\Sub_A(M)$ is artinian and satisfies Property $(\sharp)$.
\item\label{failslllposet} $M$ fails to satisfy Property {\LLL} if and only if there exist $U\subset V$ in $\Sub_A(M)$ such that the poset $\{W:U\subset W\subset V\}$ satisfies Property $(\sharp)$.\qed
\end{enumerate}
\end{cor}

\section{Finitely generated modules over complete local rings}\label{ficom}

\subsection{Cardinality of complete local rings}

We need the following easy observation.

\begin{lem}\label{ccom}
Let $(R,\MM)$ be a complete local ring with residual field $\mathbf{k}=R/\MM$ of cardinality $\alpha$. Then the cardinality of $R$ is equal to
\begin{itemize}
\item $\alpha^{\aleph_0}$ if $R$ is non-artinian;
\item $\alpha^{\ell(R)}$ if $R$ is artinian (of finite length $\ell(R)$ as $R$-module); in particular in this case, the cardinality of $R$ equals $\alpha$ if $\mathbf{k}$ (or equivalently $R$) is infinite;
\end{itemize}
\end{lem}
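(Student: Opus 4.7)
The plan is to exploit the completeness hypothesis to write $R=\varprojlim_n R/\MM^n$ (noting $\bigcap_n\MM^n=0$ by Krull), combined with the observation that each successive quotient $\MM^n/\MM^{n+1}$ is a finite-dimensional $\mathbf{k}$-vector space (by noetherianity of $\MM^n$) and, moreover, is nonzero for every $n$ whenever $R$ is non-artinian: indeed, if $\MM^n/\MM^{n+1}=0$ for some $n$ then Nakayama applied to the finitely generated module $\MM^n$ forces $\MM^n=0$, making $R$ of finite length.

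For the artinian case, a composition series of $R$ has $\ell(R)$ simple factors, each isomorphic to $\mathbf{k}$, so $|R|=\alpha^{\ell(R)}$ by a direct counting argument (which collapses to $\alpha$ when $\mathbf{k}$ is infinite and $\ell(R)\geq 1$). For the non-artinian case, the upper bound $|R|\leq\alpha^{\aleph_0}$ follows from the injection $R\hookrightarrow\prod_n R/\MM^n$: each $R/\MM^n$ has finite length, hence cardinality at most $\alpha^{\ell(R/\MM^n)}\leq\alpha^{\aleph_0}$, and a countable product of such cardinals is again $\alpha^{\aleph_0}$.

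For the matching lower bound, I would fix a set of representatives $S\subseteq R$ for $\mathbf{k}=R/\MM$ (so $|S|=\alpha$) and, using the observation above, choose $a_n\in\MM^n\setminus\MM^{n+1}$ for every $n\geq 0$. The map $\Phi\colon S^{\mathbf{N}}\to R$ given by $\Phi((c_n))=\sum_n c_na_n$ is well-defined by completeness since $c_na_n\in\MM^n$. The key point to verify is injectivity: if $(c_n)\neq(c'_n)$ and $m$ is the least index of disagreement, then $\Phi((c_n))-\Phi((c'_n))\equiv(c_m-c'_m)a_m\pmod{\MM^{m+1}}$, which is nonzero because $c_m-c'_m\notin\MM$ is a unit while $a_m\notin\MM^{m+1}$. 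This gives $|R|\geq\alpha^{\aleph_0}$, matching the upper bound.

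The only real subtlety is keeping the cardinal arithmetic uniform: one needs $(\alpha^{\aleph_0})^{\aleph_0}=\alpha^{\aleph_0}$, which holds both when $\alpha$ is infinite and when $\alpha$ is finite (in which case $\alpha^{\aleph_0}=2^{\aleph_0}$, and the non-artinian hypothesis is precisely what ensures infinitely many nontrivial layers $\MM^n/\MM^{n+1}$, so that the product $\prod_n |\MM^n/\MM^{n+1}|$ indeed reaches $2^{\aleph_0}$). No case distinction on $\alpha$ is therefore needed in the proof itself; the distinction between the artinian and non-artinian cases accounts for all the behaviour.
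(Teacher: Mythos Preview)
Your proof is correct and follows the same outline as the paper for the artinian case and for the upper bound in the non-artinian case (composition series, then the embedding $R\hookrightarrow\prod_n R/\MM^n$). The lower-bound argument, however, is genuinely different. The paper argues at the level of the inverse system: since each surjection $R/\MM^{n+1}\to R/\MM^n$ is non-bijective, it chooses for each element a bijection from $\alpha$ onto its fibre and threads these together to inject $\alpha^{\aleph_0}$ into the inverse limit; this requires a case split according to whether $\alpha$ is finite (where it falls back on the profinite/Cantor picture) or infinite. Your explicit series map $\Phi((c_n))=\sum_n c_na_n$, with $a_n\in\MM^n\smallsetminus\MM^{n+1}$ and $c_n$ running over a fixed set of residue representatives, is cleaner: the injectivity computation is uniform in $\alpha$ and sidesteps the dichotomy entirely. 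The paper's approach has the mild advantage of not needing to single out a specific sequence $(a_n)$, but yours is shorter and more transparent.
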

\begin{proof}
If $R$ is artinian, then $R$ has a composition series, as an $R$-module, with subfactors $\mathbf{k}$ so the last assertion follows. In general, $R/\MM^n$ is artinian and the above applies. Since $R$ embeds as a subring into $\prod_n R/\MM^n$, we deduce that the cardinality of $R$ is at most $\alpha^{\aleph_0}$ (noting if $\alpha$ is finite that $\alpha^{\aleph_0}=\aleph_0^{\aleph_0}$). Let us check that this is an equality if $R$ is not artinian. So by assumption $R/\MM^{n+1}\to R/\MM^n$ is a bijection for no $n$. If $\alpha$ is infinite, every element has exactly $\alpha$ preimages and picking, for every $n$ and every element $x\in R/\MM^n$, a bijection $v_x$ from $\alpha$ to the set of preimages of $x$, we easily deduce an injection of $\alpha^{\aleph_0}$ into $R$, mapping any $(u_0,\dots)\in\alpha^{\aleph_0}$ to the sequence $(x_0,\dots)$ of $R$, where $x_0=0$ and $x_{n+1}=v_{u_n}(x_n)$, which is an element of the projective limit. If $\alpha$ is finite, a similar argument holds; note that in this case $(R,+)$ is a profinite group obtained as an inverse limit of a sequence of finite groups; since it is infinite, it is homeomorphic to a Cantor set.
\end{proof}

\subsection{Topology on submodules}\label{toposub}

Let $(R,\MM)$ be a complete local ring and $M$ a finitely generated $R$-module. There is a natural topology on the set $\Sub_R(M)$ of submodules of $M$, usually strictly finer than the topology induced by inclusion in $2^M$.

Namely, it is defined by the ultrametric distance
$$d(N,N')=\exp\left(-\sup\{n\ge 0:\;\;N\subset N'+\MM^nM\textnormal{ and }N'\subset N+\MM^nM\}\,\right).$$
This is indeed a distance (and not only a semi-distance), because submodules are closed (Proposition \ref{isclosed}).

\begin{lem}\label{phicont}
Let $R$ be a complete noetherian local ring with residual field $\mathbf{k}$. Fix an injective hull $E$ of $\mathbf{k}$ as an $R$-module, defining Matlis duality $M\mapsto T(M)=\Hom(M,E)$. 
Let $M$ be a finitely generated $R$-module. Endow
\begin{itemize}
\item $\Sub(M)$ with the topology introduced in \ref{toposub};
\item $\Sub(T(M))$ with the topology defined by inclusion into $2^{T(M)}$.
\end{itemize}

The the resulting bijection $\Sub(M)\to\Sub(T(M))$ arising from Matlis duality (mapping $N$ to the ``orthogonal" $\{f\in T(M):f(N)=0\}$, see Corollary \ref{cmatlis}) is continuous.
\end{lem}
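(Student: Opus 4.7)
The topology on $2^{T(M)}$ is the product topology coming from the identification $2^{T(M)}=\{0,1\}^{T(M)}$, so a subbasis for its open sets is given, for each $f\in T(M)$, by the two sets $\{S\subseteq T(M):f\in S\}$ and its complement. Thus, to prove that the map $\zeta_M:N\mapsto N^\perp=\{f\in T(M):f(N)=0\}$ is continuous, it suffices to fix $f\in T(M)$ and show that the subset
\[U_f=\{N\in\Sub(M):f(N)=0\}\subseteq \Sub(M)\]
is clopen in the ultrametric topology from \S\ref{toposub}.

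The key step is to show that every $f\in T(M)=\Hom_R(M,E)$ factors through a quotient of the form $M/\MM^n M$. This rests on two standard facts: (i) every element of $E$ is annihilated by some power of $\MM$, because $E=\bigcup_n \ann_E(\MM^n)$ (the injective hull of $\mathbf{k}$ over a noetherian local ring is the directed union of its finite-length submodules); and (ii) $M$ is finitely generated, so if $x_1,\dots,x_k$ are generators and $\MM^{n_i}$ annihilates $f(x_i)$, then $n=\max_i n_i$ satisfies $f(\MM^n M)=0$, since $f(m\sum_i r_ix_i)=\sum_i r_i(mf(x_i))=0$ for $m\in\MM^n$.

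Once this is in hand, continuity is immediate. Suppose $d(N,N')\le e^{-n}$, i.e.\ $N\subset N'+\MM^n M$ and $N'\subset N+\MM^n M$. Then
\[f(N)\subset f(N')+f(\MM^n M)=f(N'),\]
and symmetrically, so $f(N)=f(N')$; in particular the condition ``$f(N)=0$'' depends only on the $e^{-n}$-ball around $N$. Hence $U_f$ is a union of open balls, so open, and the same argument shows its complement is open. This gives continuity of $\zeta_M$.

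The only nontrivial input is the fact that elements of $E$ are $\MM$-power torsion; this is classical Matlis theory and will presumably be recorded in the appendix referenced by the paper. Modulo that fact, the proof is essentially the one-line observation above, so I do not anticipate any serious obstacle.
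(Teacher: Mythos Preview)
Your proof is correct and follows the same idea as the paper's: both hinge on the filtration $E=\bigcup_n\ann_E(\MM^n)$ and the fact that, since $M$ is finitely generated, any $f\in T(M)$ has image in some $\ann_E(\MM^n)$ and hence kills $\MM^nM$. The paper compresses this into two lines (writing $E_n=\ann_E(\MM^n)$ and observing that $d(N,N')\le e^{-n}$ forces the orthogonals to agree on the corresponding level), while you spell out the subbasis argument explicitly; the substance is identical.
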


\begin{proof}
Observe that the decreasing sequence of ideals $\MM^n$ corresponds to an increasing sequence of submodules $E_n$ of $E$, with $E=\bigcup E_n$, and then remark that 
$d(I,J)\le\exp(-n)$ implies that if $\phi(I)\cap E_n=\phi(J)\cap E_n$. 
\end{proof}

Even if we will not use it, let us mention that the reciprocal bijection is usually not continuous.

\begin{prop}Under the assumptions of Lemma \ref{phicont},
the reciprocal bijection $\Sub(T(M))\to \Sub(M)$ is continuous if and only if $\mathbf{k}$ is finite or $M$ is uniserial.
\end{prop}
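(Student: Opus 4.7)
The plan is to dispatch the two sufficient conditions by general topology and then to produce an explicit non-convergent sequence in the remaining case.

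If $\mathbf{k}$ is finite, each $M/\MM^n M$ is a finite set (its composition factors are copies of $\mathbf{k}$), so the natural map $\Sub(M) \hookrightarrow \prod_n \Sub(M/\MM^n M)$ realizes $\Sub(M)$ with its $\MM$-adic topology as a closed subspace of a product of finite discrete spaces, hence as a compact Hausdorff space. Together with the compactness of $\Sub(T(M)) \subset 2^{T(M)}$, the continuous bijection of Lemma \ref{phicont} between compact Hausdorff spaces is automatically a homeomorphism. If instead $M$ is uniserial, Corollary \ref{uniserial} together with finite generation over a noetherian local ring forces $M$ to be of finite length or isomorphic to some $R/\PP$ that is a DVR. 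In the first case both $\Sub(M)$ and $\Sub(T(M))$ are finite; in the second, both are countable chains in which only one endpoint is non-isolated (with $\MM^k M \to 0$ in $\Sub(M)$ and dually $E_k \to T(M)$ in $\Sub(T(M))$), and a direct inspection shows Matlis duality is a homeomorphism.

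The hard direction is the converse. Assuming $\mathbf{k}$ infinite and $M$ non-uniserial, I plan to extract a subquotient $P/Q \cong \mathbf{k}^2$ as follows: two incomparable submodules $N_1, N_2$ give $(N_1+N_2)/(N_1\cap N_2) \cong N_1/(N_1\cap N_2) \oplus N_2/(N_1\cap N_2)$, and each nonzero noetherian summand has a simple quotient equal to $\mathbf{k}$, producing a quotient of the form $\mathbf{k}^2$ and hence submodules $Q \subset P \subset M$ with $P/Q \cong \mathbf{k}^2$. For each line $\ell \in \mathbf{P}^1(\mathbf{k})$, let $L_\ell \subset M$ denote the preimage of $\ell$ in $P$; these are infinitely many pairwise distinct submodules sitting strictly between $Q$ and $P$. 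For a sequence of distinct lines $\ell_i$, a case analysis on $e \in T(M)$ shows $\zeta_M(L_{\ell_i}) \to \zeta_M(P)$ in $2^{T(M)}$: $e \in \zeta_M(L_\ell)$ always holds if $e \in \zeta_M(P)$, never holds if $e \notin \zeta_M(Q)$, and holds for exactly one $\ell$ if $e \in \zeta_M(Q)\setminus \zeta_M(P)$ (such an $e$ induces a nonzero linear form on $P/Q = \mathbf{k}^2$ vanishing on a unique line).

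The main obstacle is then to arrange a uniform lower bound on the $\MM$-adic distance $d(L_\ell, P)$. I plan to appeal to the Artin--Rees lemma for the inclusion $P \subset M$: since $P/Q \cong \mathbf{k}^2$ is killed by $\MM$ we have $\MM P \subset Q$, so there exists $c$ with
\[\MM^n M \cap P \subset \MM^{n-c} P \subset \MM P \subset Q \subset L_\ell \qquad (n \ge c+1).\]
Fixing any such $n$, this gives $L_\ell \cap \MM^n M = P \cap \MM^n M$, so the images of $L_\ell$ and $P$ in $M/\MM^n M$ differ by exactly the nonzero quotient $P/L_\ell \cong \mathbf{k}$. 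Hence $d(L_\ell, P) \ge e^{-n}$ uniformly in $\ell$, so $L_{\ell_i} \not\to P$ in the $\MM$-adic topology, contradicting continuity of the inverse bijection at $\zeta_M(P)$.
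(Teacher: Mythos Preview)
Your proof is correct and follows the same essential strategy as the paper: for the easy direction, a compactness argument; for the hard direction, produce a $\mathbf{k}^2$ subquotient $P/Q$ and use an Artin--Rees bound to show that the intermediate ``line'' submodules are uniformly separated in the $\MM$-adic topology.

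The only notable difference is in how the hard direction is wrapped up. The paper observes that the set $X$ of submodules between $Q$ and $P$ is closed in $\Sub(M)$, infinite (since $\mathbf{k}$ is infinite), and discrete (by the Artin--Rees bound), so $\Sub(M)$ is not compact; since $\Sub(T(M))\subset 2^{T(M)}$ is compact, the inverse bijection cannot be continuous. You instead work harder on the dual side, explicitly verifying that $\zeta_M(L_{\ell_i})\to\zeta_M(P)$ in $2^{T(M)}$ via the three-case analysis on $e$, and then showing $L_{\ell_i}\not\to P$. Both are fine; the paper's compactness shortcut avoids having to analyse convergence in $\Sub(T(M))$ at all, while your version is more explicit about exactly where continuity fails. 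Your construction of the $\mathbf{k}^2$ subquotient from two incomparable submodules is also a detail the paper leaves implicit.
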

\begin{proof}
When $\mathbf{k}$ is finite, $M$ is compact metrizable, the topology on $\Sub(M)$ is induced by the Hausdorff distance on compact subsets and thus is compact, and hence the continuous bijection has to be a homeomorphism.

When $M$ is uniserial, it also follows that $\Sub(M)$ is compact: indeed, either $M$ has finite length and in this case $\Sub(M)$ is finite, or $M$ has infinite length and is isomorphic to a discrete valuation ring of the form $R/\PP$ (this is well-known, see Lemma \ref{dome} if necessary). Then $\Sub(M)$ is an infinite chain, namely a discrete descending sequence and $\{0\}$, which is indeed the limit of this sequence in the topology of $\Sub(M)$. 

Now assume that $\mathbf{k}$ is infinite and $M$ is not uniserial. Then $M$  has submodules $P\subset N$ with $N/P\simeq (R/\MM)^2$. Then the set $X$ of submodules containing $P$ and containing $N$ is a closed subset of $M$.
There exists $n$ such that $\MM^nM\cap N\subset P$. It follows that $X$ is discrete. Since $\mathbf{k}$ is infinite, $X$ is infinite. Hence $X$ is not compact, so $\Sub(M)$ is not compact. Since $\Sub(T(M))$ is compact, we deduce that they are not homeomorphic. 
\end{proof}

\subsection{Krull dimension at least two}\label{remi_k2}

\begin{thm}\label{grande}
Let $(R,\MM)$ be a complete noetherian local ring of Krull dimension at least 2. Then $R$ has uncountably many ideals. More precisely, it has uncountably many prime ideals of height 1.
\end{thm}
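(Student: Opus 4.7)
My approach is to apply the Baire category theorem to $R$ equipped with its $\MM$-adic topology. Since $R$ is a complete noetherian local ring, this topology makes $R$ a complete ultrametric space (via the $d$ defined in \S\ref{remi}), hence a Baire space. The maximal ideal $\MM$ is a nonempty clopen subset of $R$ (its complement, the group of units, is open), so $\MM$ is itself a nonempty Baire space.

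Next I would show that $\MM$ equals the union of its (finitely many) minimal primes together with \emph{all} height-1 primes of $R$: if $x \in \MM$ lies in no minimal prime of $R$, then by Krull's Hauptidealsatz any prime minimal over $(x)$ has height exactly $1$, so $x$ lies in such a prime. Each prime $\QQ$ in this collection is strictly contained in $\MM$ because $\dim R \ge 2$; by Proposition \ref{isclosed} it is closed in the $\MM$-adic topology, and I would verify it has empty interior by noting that a nonempty interior on an additive subgroup would force $\MM^n \subseteq \QQ$ for some $n$, whence $\MM = \sqrt{\MM^n} \subseteq \QQ$, contradicting $\QQ \subsetneq \MM$. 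Thus every prime in the proposed cover is closed and nowhere dense in $R$, and, because $\MM$ is open in $R$, also nowhere dense in the subspace $\MM$.

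If the set of height-1 primes were countable, $\MM$ would then be a countable union of closed nowhere-dense subsets of the Baire space $\MM$, a contradiction; hence there are uncountably many height-1 primes, and a fortiori uncountably many ideals. The step requiring the most care is the covering claim, where one must catch every element of $\MM$ by some prime in the (assumed countable) family; this is precisely where the hypothesis $\dim R \ge 2$ enters, both through the Hauptidealsatz and through the observation that every prime of height $\le 1$ is strictly smaller than $\MM$.
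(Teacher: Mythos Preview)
Your proposal is correct and is essentially the same argument as the paper's: both apply the Baire category theorem to the $\MM$-adic topology, use the Hauptidealsatz to cover $\MM$ by primes of height $\le 1$, observe that each such prime is closed with empty interior, and conclude. Your write-up is slightly more explicit in justifying that $\MM$ is itself a Baire space and in separating the height-$0$ and height-$1$ primes in the cover, but the underlying idea is identical.
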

\begin{proof}
Endow $R$ with the $\MM$-adic topology. Note that this is a Baire space (homeomorphic to $\mathbf{k}^\mathbf{Z}$), and that ideals are all closed (Proposition \ref{isclosed}). 

The Hauptidealsatz says that for every $x\in\MM$, every minimal prime ideal among those containing $Rx$ has height $\le 1$. Hence $\MM$ is the union of all prime ideals of height $\le 1$. Now every non-maximal prime ideal, and in particular every prime ideal of height $\le 1$, has empty interior (because if a prime ideal contains $\MM^n$ for some $n$, it should contain $\MM$). By the Baire category theorem, $\MM$ cannot be covered by countably many closed subsets with empty interior. So there are uncountably many prime ideals of height $\le 1$. Since there are finitely many prime ideals of height 0, the result follows. 
\end{proof}

\begin{prop}\label{grandes}
Let $R$ be a noetherian local ring of Krull dimension at least two. Then the set of ideals $\mathcal{I}(R)=\Sub_R(R)$, with the topology introduced in \S\ref{toposub}, is not scattered, and more precisely its subset of principal proper ideals is a nonempty perfect subset.
\end{prop}
\begin{proof}
Since $\MM$ has height at least 2, so does $\MM^n$ for all $n\ge 1$ as well as any ideal containing $\MM^n$ for some $n\ge 1$. By Lemma \ref{k2}, for every $x\in\MM$ and $n\ge 1$, there exists $\varepsilon_n\in\MM^n\smallsetminus Rx$. 
 
Clearly, for the given topology on $\mathcal{I}(R)$, the sequence $(R(x+\eps_n))$ tends to $Rx$ and $R(x+\eps_n)\neq Rx$. In particular, the set $\mathcal{P}\subset\mathcal{I}(R)$ consisting of $Rx$, for $x\in\MM$, is a nonempty perfect set (i.e.\ without isolated points).
\end{proof}

This yields, with a somewhat more complicated proof (as we use the material of \S\ref{toposub}), an improvement of the first part of Theorem \ref{grande}. 

\begin{cor}
Let $R$ be a complete noetherian local ring of Krull dimension at least two and $E$ an injective hull of its residual field. Then the compact set $\Sub(E)$ of submodules of $E$, endowed with the topology induced by inclusion in $2^E$, is not scattered and has cardinality $\ge 2^{\aleph_0}$. In particular, $R$ has $\ge 2^{\aleph_0}$ ideals. 
\end{cor}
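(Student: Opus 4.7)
The plan is to transport, via Matlis duality, the perfect set of principal proper ideals produced by Proposition \ref{grandes} into a subset of $\Sub(E)$ with no isolated points. Since $R$ is complete, we have a canonical identification $T(R)=\Hom_R(R,E)=E$, and Corollary \ref{cmatlis} supplies a bijection $\zeta\colon \Sub(R)\to\Sub(E)$. By Lemma \ref{phicont}, $\zeta$ is continuous when $\Sub(R)$ is equipped with the topology of \S\ref{toposub} and $\Sub(E)$ with the topology induced by $2^E$.

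By Proposition \ref{grandes}, the subset $\mathcal{P}\subset\Sub(R)$ consisting of proper principal ideals is nonempty and has no isolated point in itself (for the topology of \S\ref{toposub}). I would then check that $\zeta(\mathcal{P})\subset\Sub(E)$ has no isolated point in itself: given $I\in\mathcal{P}$ and any open neighborhood $V$ of $\zeta(I)$ in $\Sub(E)$, continuity of $\zeta$ makes $\zeta^{-1}(V)$ open in $\Sub(R)$ and containing $I$, so by the perfectness of $\mathcal{P}$ there exists $J\in\mathcal{P}\smallsetminus\{I\}$ with $J\in\zeta^{-1}(V)$, whence $\zeta(J)\in V\smallsetminus\{\zeta(I)\}$ by injectivity. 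Hence $\Sub(E)$ contains a nonempty subset without isolated points, so is not scattered.

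For the cardinality bound, let $C$ be the closure of $\zeta(\mathcal{P})$ in $\Sub(E)$. As $\Sub(E)$ is a closed subset of $2^E$, so is $C$; thus $C$ is a nonempty compact Hausdorff space with a basis of clopen subsets. If a point $x\in C$ were isolated in $C$, the singleton $\{x\}$ would be open in $C$ and, since $x\in\overline{\zeta(\mathcal{P})}$, it would meet $\zeta(\mathcal{P})$, forcing $x\in\zeta(\mathcal{P})$ to be isolated there, contrary to the previous paragraph. Therefore $C$ has no isolated points, and Lemma \ref{ontocantor} yields $|C|\ge 2^{\aleph_0}$, hence $|\Sub(E)|\ge 2^{\aleph_0}$. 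Since $\zeta$ is a bijection between the ideals of $R$ and $\Sub(E)$, the final assertion on the number of ideals of $R$ follows.

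The argument is essentially a formal consequence of the preceding results; the only delicate point is the transfer of the dense-in-itself property from $\mathcal{P}$ to its closure in $\Sub(E)$, and this rests on the elementary observation that an isolated point of a closure must itself lie in the set being closed.
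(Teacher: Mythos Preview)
Your proof is correct and follows essentially the same route as the paper: transport the perfect set $\mathcal{P}$ of principal proper ideals from Proposition \ref{grandes} across the continuous Matlis bijection of Lemma \ref{phicont}, take the closure in $\Sub(E)$, and apply Lemma \ref{ontocantor}. You have simply supplied more detail than the paper does on why $\overline{\zeta(\mathcal{P})}$ is perfect, which the paper asserts in one clause.
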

\begin{proof}
By Proposition \ref{grandes}, $\mathcal{I}(R)$ contains a nonempty perfect subset $\mathcal{P}$. Denote by $\phi:\mathcal{I}(R)\to\Sub(E)$ the bijection induced by Matlis duality; it is continuous by Lemma \ref{phicont}. Hence $\overline{\phi(\mathcal{P})}$ is a nonempty compact perfect subset of $\Sub(E)$. In particular, the latter is not scattered, and by Lemma \ref{ontocantor} it has at least $2^{\aleph_0}$ elements. 
In turn, since $\phi$ is a bijection, it follows that $\mathcal{I}(R)$ has cardinality $\ge 2^{\aleph_0}$. 
\end{proof}

We now improve Theorem \ref{grande} to obtain the exact cardinality, at the cost of a more involved proof. 

\begin{thm}\label{grandeb}
Let $R$ be a complete local ring of Krull dimension at least 2, and residual field $\mathbf{k}$ of cardinal $\alpha$. Then $R$ has exactly $\alpha^{\aleph_0}$ ideals.
\end{thm}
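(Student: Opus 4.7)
The plan is to prove matching upper and lower bounds of $\alpha^{\aleph_0}$.

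For the upper bound: since $R$ is complete and every ideal of $R$ is $\MM$-adically closed (Proposition \ref{isclosed}), each ideal $I$ equals $\bigcap_n(I+\MM^n)$ and is therefore determined by the compatible family $\bigl((I+\MM^n)/\MM^n\bigr)_n$ of ideals of $R/\MM^n$, and conversely every such compatible family arises from an ideal (its projective limit). Each $R/\MM^n$ is artinian local of some finite length $\ell_n$ with cardinality $\alpha^{\ell_n}$ by Lemma \ref{ccom}, and any of its ideals is generated by at most $\ell_n$ elements, so it carries at most $\alpha^{\ell_n^2}$ ideals. The product $\prod_n\alpha^{\ell_n^2}$ equals $\alpha^{\aleph_0}$ in both the infinite-$\alpha$ case (each factor being $\alpha$) and the finite-$\alpha$ case (using $\ell_n\to\infty$ since $R$ is non-artinian, so the infinite product of finite sets of size at least $2$ has cardinality $2^{\aleph_0}=\alpha^{\aleph_0}$).

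For the lower bound, I first reduce to the case that $R$ is a complete local Noetherian domain of Krull dimension exactly two: take a minimal prime $\PP_0$ of $R$ with $\dim R/\PP_0\ge 2$ (exists since $\dim R\ge 2$), then a prime of coheight $2$ in $R/\PP_0$, and quotient. The residue field stays $\mathbf{k}$, and ideals of the quotient inject into ideals of $R$. Then, by Cohen's structure theorem combined with Noether normalization for complete local rings, $R$ contains a complete regular local subring $T$ of dimension two over which $R$ is a finite module: $T=\mathbf{k}[[X_1,X_2]]$ in equal characteristic, and $T=W[[X]]$ with $W$ the Cohen DVR of residue field $\mathbf{k}$ in mixed characteristic. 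In both cases $T$ is a UFD.

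I then exhibit $\alpha^{\aleph_0}$ distinct height-one primes of $T$. In equal characteristic take the principal primes $(X_2-f(X_1))$ for $f\in X_1\mathbf{k}[[X_1]]$: writing a would-be unit relation $X_2-f=u(X_2-g)$ with $u=\sum_j u_j(X_1)X_2^j\in T^\times$ and equating coefficients of $X_2^j$ yields $u_0g=f$, $u_0-u_1g=1$, and $u_kg=u_{k-1}$ for $k\ge 2$; iterating the last forces $u_1\in\bigcap_k g^k\mathbf{k}[[X_1]]=0$ by Krull's intersection theorem, whence $u_0=1$ and $f=g$. In mixed characteristic take $(X-a)$ for $a\in pW$: from $X-a=u(X-a')$, substituting $X\mapsto a$ (well-defined since $a$ is topologically nilpotent) gives $0=u(a)(a-a')$ with $u(a)\in W^\times$, so $a=a'$. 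The parameter sets have cardinality $\alpha^{\aleph_0}$ by Lemma \ref{ccom}. Finally, by lying over for the integral extension $T\subset R$, each such $\PP\subset T$ is the contraction of some prime $\QQ\subset R$; distinct $\PP$ force distinct $\QQ$, producing $\alpha^{\aleph_0}$ distinct ideals of $R$.

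The principal delicacy is invoking Cohen's structure theorem together with Noether normalization for complete local rings, to obtain the subring $T$ of matching dimension; once $T$ is in hand, the counting in the UFD $T$ and the lying-over step are routine.
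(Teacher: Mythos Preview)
Your proof is correct, but the lower bound takes a genuinely different route from the paper's.

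For the upper bound, the paper simply observes that $R$ has cardinality $\alpha^{\aleph_0}$ (Lemma~\ref{ccom}) and is noetherian, so counting finite generating subsets suffices; your inverse-system argument reaches the same bound with a little more work.

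For the lower bound the two arguments diverge completely. The paper works \emph{intrinsically}: it builds a rooted tree whose level-$n$ vertices are the classes of the equivalence relation $x\sim_n y \Leftrightarrow Rx+\MM^n=Ry+\MM^n$ on $\MM$, proves via Lemma~\ref{k2} (a direct consequence of the Hauptidealsatz) that every vertex has exactly $\alpha$ children, and uses completeness of $R$ to identify geodesic rays with $\sim_\infty$-classes, hence with principal ideals. This yields $\alpha^{\aleph_0}$ \emph{principal} ideals without leaving $R$.

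Your approach is \emph{structural}: after reducing to a two-dimensional complete local domain, you invoke Cohen's structure theorem with Noether normalization to produce a regular complete local subring $T$ (either $\mathbf{k}[[X_1,X_2]]$ or $W[[X]]$) over which $R$ is finite, count $\alpha^{\aleph_0}$ height-one primes of $T$ by hand, and pull them up to $R$ by lying-over. This produces $\alpha^{\aleph_0}$ \emph{prime} ideals, sharpening Theorem~\ref{grande} to arbitrary residue fields. The trade-off is clear: the paper's argument is self-contained and elementary (only the Hauptidealsatz), while yours imports heavier machinery but makes the counting completely explicit and gives prime ideals rather than merely principal ones.
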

\begin{proof}
Since $R$ is noetherian and has cardinality $\alpha^{\aleph_0}$ by Lemma \ref{ccom}, this is an obvious upper bound.

For $x,y\in R$ and $n\ge 1$, we say that $x\sim_n y$ if $Rx+\MM^n=Ry+\MM^n$. This is obviously an equivalence relation. We claim that $x\sim_n y$ if and only if there exists $t\in R\smallsetminus\MM$ and $z\in \MM^n$ such that $x=ty+z$. The ``if" part being trivial, assume that $x\sim_n y$. Then $x$ and $y$ generate the same ideal in the quotient $R/\MM^n$. This means that modulo $\MM^n$, we can write $x=ty$ with $t\notin\MM$. Lifting this to $R$, we obtain $x=ty+z$ with $z\in\MM^n$.

Define $\sim_\infty=\bigcap\sim_n$. This is a decreasing union, so $\sim_\infty$ is an equivalence relation as well. Clearly if $Rx=Ry$ then $x\sim_\infty y$. 

Define a rooted tree $\mathcal{T}$ as follows. For $n\ge 1$, the $n$th level is the quotient $\mathcal{T}_n$ of $\MM$ by the equivalence relation $\sim_n$. For $n\ge 2$, each $\sim_n$-class is contained in a unique $\sim_{n-1}$-class; the corresponding vertex of level $n$ is thus connected to the corresponding vertex of level $n-1$. (Note that the root is of level one).

Now define $\mathcal{T}_\infty$ as the set of geodesic rays of $\mathcal{T}$ (emanating from the root). Namely, a geodesic ray is a sequence $r=(r_n)_{n\ge 1}$ with $r_n\in\mathcal{T}_n$ and $r_{n+1}\subset r_n$ for all $n\ge 1$. 

Let $\mathcal{T}_{(\infty)}$ be the quotient of $\MM$ by the equivalence relation $\simeq_\infty$. There is a canonical map $\mathcal{T}_{(\infty)}\to\mathcal{T}_n$, mapping a $\sim_\infty$-class to the unique $\sim_n$-class containing it; together they define a canonical map $\mathcal{T}_{(\infty)}\to\mathcal{T}_\infty$, which is immediately seen to be injective.

Let us show that this map is bijective. We have to construct the inverse map. Namely, given a ray $(r_n)$ as above, we set $r_\infty=\bigcap r_n$. Observe that if $r_\infty$ is not empty, then it is a $\sim_\infty$-class. Indeed, clearly for all $x,y\in r_\infty$ we have $x\sim_\infty y$; conversely if $x\in r_\infty$ and $y\sim_\infty x$, then $x\in r_n$ for all $n$, and $y\sim_n x$ so $y\in r_n$ as well, so $y\in r_\infty$. Moreover, this is necessarily the preimage of $r_\infty$ in the previous injection.

Let us now check that $r_\infty$ is not empty; here we shall use the fact that $R$ is complete.
Let $x_n$ be a representative of the $\sim_n$-class $r_n$.
Since $(r_n)$ is a ray, $x_n\sim_n x_{n+1}$ for all $n$. This means that we can write $x_{n+1}=t_nx_n+z_n$ with $t_n\in R\smallsetminus\MM$ and $z_n\in\MM^n$. Define $y_n=\left(\prod_{1\le j\le n-1}t_j\right)^{-1}x_n$ and $\zeta_n=\left(\prod_{1\le j\le n}t_j\right)^{-1}z_n$, so $\zeta_n\in\MM^n$. Then $(y_n)$ is another representative of $(r_n)$, and we have, for all $n$, $y_{n+1}=y_n+\zeta_n$. This means that the sequence $(y_n)$ is convergent in $R$ (endowed with its inverse limit topology), to a limit $y$, characterized by the fact that $y-y_n\in\MM^n$ for all $n$. Thus $y\sim_n y_n$, i.e.\ $y\in r_n$ for all $n$, so $y\in r_\infty$.

What we finally have to check is that the tree is everywhere branched of degree $\alpha$. Let $r$ be a vertex of level $n$, and $x$ a representative of $r$. We have to check that $r$ has exactly $\alpha$ successors, i.e.\ that there exist $\alpha$ distinct elements $y$ that are $\sim_n$-equivalent to $x$ and pairwise not $\sim_{n+1}$-equivalent. We consider two cases.

\begin{itemize}
\item Suppose $x\in\MM^n$. All $z\in\MM^n$ are $\sim_n$-equivalent to $x$. If two such elements $z,z'$ are $\sim_{n+1}$-equivalent, then for some $t\in R\smallsetminus\MM$ we have $z=tz'+o(\MM^n)$. Therefore to conclude we have to check that the quotient of $\MM^n/\MM^{n+1}$ by the action by multiplication of $\mathbf{k}=R/\MM$ has $\alpha$ orbits. This is clearly equivalent to say that the dimension of $\MM^n/\MM^{n+1}$ as $\mathbf{k}$-vector space is at least two, which is an immediate consequence of Lemma \ref{k2}.

\item Suppose $x\notin\MM^n$. Then the ideal $I=\{a\in R:ax\in\MM^n\}$ is contained in $\MM$. Let us consider elements of the form $x+z$ with $z\in\MM^n$; they are clearly $\sim_n$-equivalent to $x$.
Suppose that two such elements, $x+z$ and $x+z'$, are $\sim_{n+1}$-equivalent. For convenience, we use a Landau notation and write any element of $\MM^{n+1}$ as $o(\MM^n)$.  Then there exists $\lambda\in R\smallsetminus\MM$ such that $x+z=t(x+z')+o(\MM^{n})$, so $(1-t)x=tz'-z+o(\MM^{n})$. Therefore $(1-t)\in I$, so $(1-t)\in\MM$. Therefore $tz'-z'=o(\MM^n)$, and thus
$(1-t)x=z'-z+o(\MM^n)$. Define $J_n=(Rx\cap\MM^n)+\MM^{n+1}\subset\MM^n$. We just proved that if $x+z$ and $x+z'$ are $\sim_{n+1}$-equivalent, then $z-z'\in J_n$. Now observe that $\MM^n/J_n$ is a $\mathbf{k}$-vector space; it has cardinality $\alpha$ unless $\MM^n\subset J_n$, but this is absurd: indeed this implies that $\MM^n\subset \MM^{n+1}+Rx$, which is discarded by Lemma \ref{k2}.\qedhere
\end{itemize}
\end{proof}

\begin{cor}\label{crande}
Let $R$ be a complete noetherian local ring, with residual field $\mathbf{k}$ of cardinality $\alpha$. Let $M$ a finitely generated $R$-module of Krull dimension at least two. Then $M$ has exactly $\alpha^{\aleph_0}$ submodules. In particular if $\mathbf{k}$ is countable then $M$ has exactly $2^{\aleph_0}$ submodules. 
\end{cor}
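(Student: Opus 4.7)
My plan is to bound the number of submodules above and below by $\alpha^{\aleph_0}$, leveraging Theorem \ref{grandeb} as the main input for the lower bound.

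For the upper bound, I would argue as follows. Since $M$ has Krull dimension at least two and is a quotient of $R^n$ for some $n$, the ring $R$ itself has Krull dimension at least two, hence is non-artinian. By Lemma \ref{ccom}, $|R|=\alpha^{\aleph_0}$, so $|M|\leq|R^n|=\alpha^{\aleph_0}$. Since $R$ is noetherian, every submodule of $M$ is finitely generated, and the number of finite tuples in $M$ (hence of finitely generated submodules) is at most $|M|=\alpha^{\aleph_0}$.

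For the lower bound, the key step is to find a sufficiently large ``complete local domain'' quotient living inside $M$. Since $M$ is a finitely generated module over the noetherian ring $R$, we have the standard equality
\[\dim M=\max\{\dim R/\PP:\PP\in\Ass_R(M)\},\]
because every minimal prime of $\mathrm{Supp}(M)$ is an associated prime. Pick $\PP\in\Ass_R(M)$ with $\dim R/\PP=\dim M\geq 2$. By definition of an associated prime, $R/\PP$ embeds as a submodule of $M$. Moreover $R/\PP$ is a complete noetherian local ring (quotients of complete local rings are complete in the $\MM$-adic topology) with residual field $\mathbf{k}$ of cardinality $\alpha$, and Krull dimension at least $2$. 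Theorem \ref{grandeb} then yields $\alpha^{\aleph_0}$ distinct ideals in $R/\PP$, and these give $\alpha^{\aleph_0}$ distinct submodules of $M$ through the embedding $R/\PP\hookrightarrow M$.

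I do not expect real obstacles here: the hard analytical work has already been done in Theorem \ref{grandeb}, and the reduction from a finitely generated module to a complete local domain quotient is a standard dimension-theoretic argument. The only point that deserves a line of justification is that reducing modulo $\PP$ preserves completeness and the residual field, so that the quantitative conclusion of Theorem \ref{grandeb} applies with the same cardinal $\alpha$. The ``in particular'' for countable $\mathbf{k}$ is then immediate from $\aleph_0^{\aleph_0}=2^{\aleph_0}$.
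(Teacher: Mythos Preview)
Your proof is correct and follows essentially the same approach as the paper: bound above using Lemma \ref{ccom} and the noetherian property, and bound below by picking an associated prime $\PP$ of coheight at least two (which you justify via the dimension formula $\dim M=\max_{\PP\in\Ass_R(M)}\dim R/\PP$), embedding $R/\PP$ into $M$, and invoking Theorem \ref{grandeb}. The paper's proof is simply a terser version of yours.
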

\begin{proof}
It is an upper bound by Lemma \ref{ccom}. Conversely $M$ has an associated ideal $\PP$ of coheight at least two, i.e.\ $R/\PP$ embeds into $M$ as a submodule and Theorem \ref{grandeb} (or Theorem \ref{grande} when $\mathbf{k}$ is countable) applies.
\end{proof}

\subsection{Krull dimension one}
Let $(R,\MM)$ be a complete local ring. The ring $R$ being endowed with the $\MM$-adic topology, and the set of submodules being topologized as in \S\ref{toposub}, the following lemma is immediate.
\begin{lem}\label{1b}
The map $R\to\Sub_R(R^2)$, mapping $b$ to the $R$-submodule generated by $(1,b)$, is injective and continuous. In particular, $\Sub_R(R^2)$ is non-scattered as soon as $R$ is non-artinian.\qed
\end{lem}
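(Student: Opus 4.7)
The plan breaks the claim into three pieces: injectivity, continuity, and the non-scattered corollary.

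For injectivity, the approach is immediate. If $R(1,b) = R(1,b')$, then $(1,b') \in R(1,b)$ forces $(1,b') = r(1,b)$ for some $r \in R$, giving $r=1$ and then $b = b'$.

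For continuity, the natural approach is to estimate the ultrametric of \S\ref{toposub} directly. If $b - b' \in \MM^n$, then for every $r \in R$,
$$r(1,b) - r(1,b') = (0,\, r(b-b')) \in \MM^n R^2,$$
so $R(1,b) \subset R(1,b') + \MM^n R^2$ and, by symmetry, the reverse containment holds. Hence $d(R(1,b), R(1,b')) \le e^{-n}$, which yields continuity from the $\MM$-adic topology on $R$ to the topology on $\Sub_R(R^2)$.

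For the final assertion, assume $R$ is non-artinian. I would first observe that $\MM^n \neq 0$ for every $n$: otherwise the (noetherian) ring $R$ would admit a composition series via the $\MM^i/\MM^{i+1}$, each a finite-dimensional $R/\MM$-vector space since $\MM$ is finitely generated, making $R$ artinian. Then I pick $\eps_n \in \MM^n \setminus \{0\}$; the elements $b + \eps_n$ are distinct from $b$ and converge to $b$ in $R$. By continuity and injectivity, for each $b$ the submodules $R(1, b+\eps_n)$ converge to $R(1,b)$ through pairwise distinct points, so $R(1,b)$ is not isolated in the image of the map. Hence the image is a nonempty perfect subset of $\Sub_R(R^2)$, exhibiting the latter as non-scattered. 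There is no real obstacle in this argument; the only minor point is to verify that arbitrarily small open balls around $R(1,b)$ actually meet the image at other points, which the distance estimate above handles with room to spare (one can replace $\MM^n$ by $\MM^{n+1}$ to land strictly inside the ball of radius $e^{-n}$).
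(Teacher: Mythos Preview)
Your proof is correct and fills in precisely the natural argument that the paper leaves implicit (the lemma is declared ``immediate'' and ends with \qed). Your injectivity and continuity computations are the expected ones, and your non-scattered argument via $b+\eps_n$ with $\eps_n\in\MM^n\smallsetminus\{0\}$ mirrors the paper's later use of the same device in Proposition~\ref{grandes}.
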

 
We deduce:

\begin{prop}\label{carre0}
Let $(R,\MM)$ be a complete noetherian local ring with residual field $\mathbf{k}$ of cardinality $\alpha$. Let $M$ be an $R$-module of Krull dimension $\le 1$. Suppose that for some non-maximal prime ideal $\PP$, $M$ possesses $(R/\PP)^2$ as a subquotient.

Then $M$ has $\ge$ $\alpha^{\aleph_0}$ submodules (i.e.\ $2^{\aleph_0}$ if $\alpha$ is countable), with equality if $M$ is finitely generated.
\end{prop}
\begin{proof}
For the inequality $\ge$, Lemma \ref{1b} shows that the cardinal of $\Sub(M)$ is greater or equal than that of $R$, which is given by Lemma \ref{ccom}. The inequality $\le$, when $M$ is finitely generated, is immediate because a submodule is determined by a finite generating family, so the cardinal of $\Sub(M)$ is bounded above by the cardinal of $R^n$ for some $n$, itself bounded above by $\max(\aleph_0,\#(R))$.
\end{proof}

Let us now prove the converse of Proposition \ref{carre0}. Recall that a module is meager if it does not admit any subquotient isomorphic to $N^2$ for any simple module $N$.

\begin{prop}\label{carre}  
Let $(R,\MM)$ be a complete noetherian local ring with residual field $\mathbf{k}$ of cardinality $\alpha$. 
Let $M$ be a finitely generated $R$-module of Krull dimension $\le 1$.

Suppose that for every non-maximal prime ideal $\PP$, $M$ does not possess $(R/\PP)^2$ as a subquotient. Then $M$ admits at most $\alpha$ submodules. More precisely

\begin{itemize}
\item if $M$ is not meager and infinite, then it has exactly $\max(\alpha,\aleph_0)$ submodules;
\item if $M$ is meager, then it has $\aleph_0$ or finitely many submodules according to whether $M$ has infinite or finite length.
\item if $M$ is finite (meager or not) then it has finitely many submodules.
\end{itemize}
\end{prop}

\begin{proof}
Let us first prove that $M$ has at most $\alpha'=\max(\alpha,\aleph_0)$ submodules. We argue by induction on the ordinal length $\ell(M)$ (see Section \ref{remi}). Note that as an immediate consequence of the definition of ordinal length, we have $\ell(M)<\omega$ if and only if $M$ has finite length in the usual sense, i.e.\ when $M$ is both noetherian and artinian.

If $\ell(M)<\omega$, then $M$ has cardinality $\le\alpha'$ (and is noetherian) and the conclusion is obvious. Now assume that $\ell(M)<\omega\cdot 2$. This means that for every submodule $N$ of $M$ either $\ell(N)<\omega$ or $\ell(M/N)<\omega$. Now on the one hand the number of submodules of finite length is at most $\alpha'$ (because they are all contained in the maximal finite length submodule), and on the other hand every finitely generated $R$-module has at most $\alpha'$ submodules of given finite colength (the colength of $N$ is by definition $\ell(M/N)$): indeed, the number of submodules $N$ with $M/N\simeq \mathbf{k}$ is controlled by the number of homomorphisms of $M$ into $\mathbf{k}$, which has cardinality at most $\alpha'$ since it is a finitely generated $\mathbf{k}$-module.

Now suppose $\ell(M)\ge\omega\cdot 2$ and assume the condition on subquotients is satisfied, and that the assertion has been proved for all modules of lesser ordinal length. If $\PP\in\Ass_R(M)$, denote by $T_\PP$ the $\PP$-torsion in $M$, i.e.\ the set of elements killed by $\PP$. If $\PP$ is maximal, then $\ell(T_\PP)<\omega$, and otherwise, the assumptions that $(R/\PP)^2$ is not a subquotient of $M$ and and that the Krull dimension of $M$ is at most 1 imply that $\ell(T_\PP)<\omega\cdot 2$. Therefore in all cases, $T_\PP$ has at most $\alpha'$ submodules by the previous case.

Now let $N$ be a non-zero submodule of $M$. Then $N$ has one associated ideal, so has non-empty intersection $N'$ with at least one of the $T_\PP$. There are at most $\alpha'$ possibilities for $N'$ (by the previous argument), and at most $\alpha'$ many possibilities for the submodule $N/N'$ of $M/N'$ (by induction hypothesis). So the second assertion is proved.

To conclude, observe that
\begin{itemize}
\item If $\alpha$ is infinite, then $\mathbf{k}^2$ admits $\alpha$ submodules;
\item If the length of $M$ is infinite, then the number of submodules of $M$ is infinite.
\item If $\mathbf{k}$ is finite and the length of $M$ is finite, then $M$ is finite and thus has only finitely many submodules.
\end{itemize}
In view of this, the only last verification is that if $\mathbf{k}$ is infinite and $M$ does not admit $\mathbf{k}^2$ as a subquotient, then the number of submodules is finite or $\aleph_0$ according to whether $M$ has finite length. Observe that in this case, the set of submodules of $M$ is totally ordered, consisting of a finite or infinite descending sequence, and the result follows.\end{proof}

\subsection{Non-redundancy of the conditions}
Let us give a large family of examples indicating that Theorem \ref{grande} cannot be deduced formally from Proposition \ref{carre0}. In other words, it shows that Condition {\KK} does not always follow from Condition {\KKK} in Theorem \ref{thmB}.

\begin{prop}\label{deuxd}
Let $A$ be a noetherian unique factorization domain of Krull dimension 2. Then for every non-maximal ideal $\PP$ of $A$, the module $(A/\PP)^2$ is not a subquotient of $A$.
\end{prop}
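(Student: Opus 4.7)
Since $A$ is a noetherian domain of Krull dimension $2$, every non-maximal prime $\PP$ is either $(0)$ or of height $1$; I would treat these two cases separately. The case $\PP=(0)$ is disposed of by a rank argument: if $N'\subset N\subset A$ were ideals with $N/N'\cong A^2$, tensoring with $K=\mathrm{Frac}(A)$ (a flat $A$-module) yields $K^2\cong (N/N')\otimes_A K$ on the one hand, while on the other $(N/N')\otimes_A K$ is a quotient of $N\otimes_A K\subset A\otimes_A K=K$ and so has $K$-dimension at most $1$---contradiction.

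The main case is when $\PP$ has height $1$. Using the UFD hypothesis, I would write $\PP=(p)$ for a prime element $p$, and assume for contradiction that ideals $I'\subset I$ of $A$ exist with $I/I'\cong (A/(p))^2$. Fix $x,y\in I$ whose images $\bar x,\bar y$ in $I/I'$ correspond to the standard basis vectors $(1,0)$ and $(0,1)$. The strategy is then an infinite descent on the $p$-divisibility of $x$ and $y$.

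The key trick is that the trivial identity $yx-xy=0$ in $A$ becomes the relation $y\bar x=x\bar y$ in $I/I'$, which under the isomorphism to $(A/(p))^2$ reads $(y\bmod p,\,0)=(0,\,x\bmod p)$, forcing $p\mid x$ and $p\mid y$. For the inductive step, once $x=p^n u$ and $y=p^n v$ with $u,v\in A$, the same trick applied to the identity $vx=uy=p^n uv$ in $A$ (viewed in $I/I'$ as $v\bar x=u\bar y$) gives $p\mid u$ and $p\mid v$, hence $p^{n+1}\mid x,y$. This forces $x,y\in\bigcap_{n\ge 1}(p^n)$, which is $\{0\}$ in the UFD $A$ because any nonzero element factors into finitely many primes and hence has bounded $p$-adic valuation. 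So $x=y=0$, contradicting that $\bar x,\bar y$ form a basis of $(A/(p))^2$.

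The only nontrivial ingredient is the commutativity--descent interplay: the identity $xy=yx$ is trivial in $A$ but imposes a sharp divisibility constraint in $(A/(p))^2$ thanks to the freeness of $(A/(p))^2$ over $A/(p)$, and once this is in place the UFD property $\bigcap_n(p^n)=0$ closes out the descent. The use of the UFD hypothesis is essential precisely at this final step; a general noetherian domain of dimension $2$ would not suffice.
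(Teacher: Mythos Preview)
Your argument is correct and genuinely different from the paper's. The paper localizes at $\PP$: tensoring an embedding $(A/\PP)^2\hookrightarrow A/I$ with the flat module $A_\PP$ yields $(A_\PP/\PP A_\PP)^2\hookrightarrow A_\PP/IA_\PP$, and since $A_\PP$ is a field (height~$0$) or a discrete valuation ring (height~$1$), the $\PP A_\PP$-torsion of any cyclic $A_\PP$-module has length at most~$1$, giving the contradiction. Your height-$1$ argument instead exploits principality $\PP=(p)$ directly via the commutativity trick $v\bar x=u\bar y$ to run an infinite $p$-descent on the preimages $x,y$; this is more elementary (no localization, no structure of DVRs) and rather elegant. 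The paper's route, however, buys extra generality: it works verbatim for any noetherian ring that is regular in codimension~$\le 1$, in particular any normal domain, whereas your descent needs height-$1$ primes to be principal and so is specific to UFDs. One minor comment: you locate the use of the UFD hypothesis at the step $\bigcap_n(p^n)=0$, but in fact that intersection vanishes in any noetherian domain by Krull's intersection theorem; the place where UFD is genuinely indispensable in your argument is the principality $\PP=(p)$, without which the descent cannot even be set up.
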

\begin{proof}

Let us show more generally that if a noetherian ring $A$ is non-singular in codimension $\le 1$ (i.e.\ for every prime ideal of height $\le 1$, the local ring $A_\PP$ is regular; this holds if $A$ is a UFD or more generally a normal domain), then for every prime ideal of height $\le 1$ there is no $A$-module embedding of $(A/\PP)^2$ into any quotient $A/I$ of $A$. 

Indeed, assume there is such an embedding. Taking the tensor product with $A_\PP$, we deduce an embedding of $(A_\PP/\PP A_\PP)^2$ into $A_\PP/IA_\PP$. If $\PP$ has height zero, then $B=A_\PP$ is a field and $B^2$ embeds into a quotient of $B$, which is an obvious contradiction. If $\PP$ has height one, $B=A_\PP$ is a local principal ideal domain; if $p$ denotes a generator of its maximal ideal, then $J=IB$ is generated by $p^k$ for some $k$. The $p$-torsion in $B/J$ is then exactly $p^{k-1}A/p^kA$, which has dimension 1 qua $B/pB$-vector space, so cannot contain a copy of $(B/pB)^2$.  
\end{proof}

\begin{rem}
The conditions (being an UFD, or being of Krull dimension 2) cannot be dropped in Proposition \ref{deuxd}. Indeed, fix a field $K$ and consider the ring (of Krull dimension 2) $A=K[X,Y,T]/\langle T^3,T^2Y,TY^2\rangle$. Then for $\PP=\langle Y,T\rangle$, it admits an ideal isomorphic as $A$-module to $(A/\PP)^2$, generated by $(TY,T^2)$. In turn, it follows that the UFD $K[X,Y,T]$, of Krull dimension 3, also does not satisfy the conclusion of Proposition \ref{deuxd}. One also obtains similar examples with complete local rings localizing at 0 and completing, namely $K[\![X,Y,T]\!]/\langle T^3,T^2Y,TY^2\rangle$ and $K[\![X,Y,T]\!]$. Thus these complete local rings satisfy neither {\KK} nor {\KKK} of Theorem \ref{thmB}.
\end{rem}

%%%%%%%%%%%%%%%%%%%%%%%%%%%%%%%%%%%%%%%%%%%%%%%%%%%%%%%%%%%%%%%%%%%%%%

\section{Proof of Theorem \ref{main}}\label{promain}

\begin{prop}\label{nona}
Let $A$ be a noetherian ring and $M$ an $A$-module. Suppose that $M$ is not minimax. Then $M$ has at least $2^{\aleph_0}$ submodules.
\end{prop}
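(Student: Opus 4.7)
The plan is to derive this directly from Theorem \ref{minidir}, which supplies the nontrivial input: the hard implication $(\ref{idires})\Rightarrow(\ref{imini})$ (equivalently, its contrapositive) was already proved in \S\ref{smx}. Once that is granted, the rest is a counting argument via the correspondence theorem.

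First I would apply Theorem \ref{minidir} in the following form: since $M$ is not minimax, condition $(\ref{idires})$ fails, so there exist submodules $P\subseteq N$ of $M$ such that the subquotient $N/P$ is isomorphic to an infinite direct sum $\bigoplus_{n\in\mathbf{N}} S_n$ of nonzero simple $A$-modules. (Any infinite index set would do; after replacement by a countable subfamily we may assume the index set is $\mathbf{N}$.)

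Next I would exhibit $2^{\aleph_0}$ distinct submodules of $\bigoplus_{n\in\mathbf{N}} S_n$. For every subset $I\subseteq\mathbf{N}$, set $L_I=\bigoplus_{n\in I} S_n$, viewed as a submodule of $N/P$ via the given isomorphism. The map $I\mapsto L_I$ is injective because each $S_n$ is nonzero: if $n\in I\triangle I'$, then the $n$th projection distinguishes $L_I$ and $L_{I'}$. Hence $N/P$ has at least $2^{\aleph_0}$ submodules.

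Finally, by the standard correspondence, the preimages in $M$ of these submodules under the quotient map $N\to N/P$ yield $2^{\aleph_0}$ pairwise distinct submodules of $N$, hence of $M$. The main work is carried by the contrapositive of $(\ref{idires})\Rightarrow(\ref{imini})$ already established in \S\ref{smx}; the present proposition is essentially a formal consequence, so there is no real obstacle beyond invoking that result.
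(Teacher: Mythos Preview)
Your proof is correct and follows essentially the same approach as the paper: both derive the result immediately from Theorem~\ref{minidir}. The only difference is cosmetic---the paper invokes the equivalent condition~(\ref{ichainr}) (a chain of submodules order-isomorphic to $(\mathbf{R},\le)$, which already has cardinality $2^{\aleph_0}$), or alternatively the topological route via Corollary~\ref{exisola} and Lemma~\ref{ontocantor}, whereas you invoke condition~(\ref{idires}) and count partial sums; these are all one-line consequences once Theorem~\ref{minidir} is in hand.
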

\begin{proof}
This follows either from Theorem \ref{minidir}, which says that $M$ has a chain of submodules order-isomorphic to $(\mathbf{R},\le)$, or from Corollary \ref{exisola}, which says that the space of submodules of $M$ has no isolated point. Hence it has a continuous map onto a Cantor set (Lemma \ref{ontocantor}).
\end{proof}

\begin{proof}[Proof of Theorem \ref{main}]
First, if $M$ is countable then $2^{\aleph_0}$ is obviously an upper bound for the number of submodules.

If $M$ is not minimax then Proposition \ref{nona} implies that $M$ has at least $2^{\aleph_0}$ submodules. Let us now assume that $M$ is minimax, so that some quotient $Q$ of $M$ by a noetherian submodule, is artinian.

If $Q$ has Loewy dimension at least two, then for some maximal ideal $\MM$ of $A$, the Matlis dual $T(Q_\MM)$ is an $\widehat{A_\MM}$-module of Krull dimension at least two. In particular, it has at least $2^{\aleph_0}$ submodules, by Corollary \ref{crande}.

To show that Proposition {\LLL} is also necessary, it is enough to check that for every artinian module $M$ of infinite length, $M\times M$ has $2^{\aleph_0}$ submodules. We can suppose that $M$ is indecomposable, and therefore $M=M_\MM$ for some $\MM$. Thus the statement amounts by Matlis duality to show that if $R$ is a complete noetherian local ring with finite residual field and $M$ is a finitely generated $R$-module of infinite length, then $M\times M$ has $2^{\aleph_0}$ submodules. In turn, this is enough to check it when $M=A/\PP$ for some non-maximal prime ideal $\PP$, this is the contents of Proposition \ref{carre0}.

Let us now prove the positive part of the theorem: let $A$ be a noetherian ring and $M$ a minimax $A$-module satisfying Properties {\LL} and {\LLL}, and we have to show that $\Sub_A(M)$ is countable.

We first deal with the case when $M$ is artinian. There is a decomposition into a finite direct sum $M=\bigoplus M_\MM$ and every submodule of $M$ decomposes accordingly. Therefore it is enough to deal with the case where $M=M_\MM$. By Matlis duality, if $R=\widehat{A_\MM}$, we have to prove that if $V$ is a finitely generated $R$-module whose Matlis dual satisfies {\LL} and {\LLL}, then it has countably many submodules. This means that $V$ satisfies {\KK} and {\KKK}, and the conclusion that $V$ has countably many submodules follows from Proposition \ref{carre}.
 
Let us prove the general case. Let $N$ be a finitely generated submodule of $M$ such that $Q=M/N$ is artinian and satisfies {\LL} and {\LLL}. Since $N$ has countably many submodules, it is enough to check that for every submodule $N_0$ of $N$, the number of submodules $H$ of $M$ such that $H\cap N=N_0$ is countable. Let us prove the latter statement, assuming without lost of generality that $N_0=\{0\}$. Let $W$ be the union of finite length submodules of $M$. Note that $W\cap N$ has finite length, so in particular $W$ is also artinian and satisfies {\LL} and {\LLL}. Every submodule $H$ of $M$ such that $H\cap N=\{0\}$ is artinian and is therefore contained in $W$. By the previous case, $W$ has only countably many submodules, and we are done.
\end{proof}

Let us endow the set $\Sub(M)$ of submodules of an $A$-module $M$ with the topology induced by inclusion in $2^M$. The material of the previous section also proves (without assuming $A$ countable):

\begin{prop}\label{nosca}
Let $A$ be a noetherian ring. If $M$ fails to be minimax, or fails to satisfy Property {\LL} or {\LLL}, then $\Sub(M)$ is not scattered.
\end{prop}
\begin{proof}
If $M$ is not minimax, then $M$ has a subquotient isomorphic to an infinite countable direct sum of nonzero module, and the set of partial sums forms a Cantor set inside $\Sub(M)$.

If $M$ fails to satisfy {\LL}, by Proposition \ref{grandes} and Lemma \ref{phicont}, $\Sub(M)$ is not scattered. 

Suppose now that $M$ fails to satisfy {\LLL}. By assumption, $M$ admits a subquotient $L$ isomorphic to $N\times N$ for some artinian module $N$ of infinite length. We can suppose that $N$ has a single associated prime $\MM$. Since $\Sub(L)$ embeds as a closed subset of $\Sub(M)$, we can suppose $M=L$ and hence we can view $M$ as a $B=\widehat{A_\MM}$-module. Let $S$ be the Matlis dual of $N$: this is a finitely generated module of infinite length. Then $S$ has some subquotient isomorphic $B/\PP$ for some prime ideal $\PP$ of coheight 1. Hence passing to a subquotient again, we can suppose that $M$ is isomorphic to the Matlis dual of $(B/\PP)^2$. 

By Lemma \ref{1b}, $\Sub((B/\PP)^2)$, with the topology of \S\ref{toposub}, is non-scattered. By Lemma \ref{phicont}, the ``orthogonal" Matlis duality bijection $\Sub((B/\PP)^2)\to\Sub(L)$ is continuous. Hence $\Sub(L)$ is non-scattered. Since it embeds as a closed subset of $\Sub(M)$, we deduce that $\Sub(M)$ is non-scattered.
\end{proof}

I do not know whether the converse of Proposition \ref{nosca} holds (by Theorem \ref{main}, it holds when $A$ is countable).

\section{Meager modules}\label{megmo}

Again, $A$ denotes a (commutative associative unital) noetherian ring.

\subsection{Structure of meager modules}\label{s_meager}

We say that two $A$-modules $M_1,M_2$ are {\em disjoint} if $I_1+I_2=A$, where $I_i$ is the annihilator of $M_i$. A direct sum decomposition $M=\bigoplus M_i$ is called a {\em disjoint decomposition} if the $M_i$ are pairwise disjoint. Note that this implies a ``Chinese remainder Theorem", i.e.\ that every submodule $N$ of $M$ decomposes as $\bigoplus (N\cap M_i)$. 

We say that an $A$-module is {\em meager} if for every maximal ideal $\MM$ of $A$, the module $(A/\MM)^2$ is not a subquotient of $M$. 

\begin{lem}\label{meagdi}
Let $M$ be a meager module. Then the associated ideals of $M$ are pairwise disjoint.
\end{lem}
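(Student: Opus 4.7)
The plan is to argue by contradiction: suppose $\PP_1, \PP_2 \in \Ass_A(M)$ are two distinct associated primes that are \emph{not} disjoint, so that $\PP_1 + \PP_2$ is contained in some maximal ideal $\MM$. I will then exhibit $(A/\MM)^2$ as a subquotient of $M$, contradicting meagerness.

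First I would fix embeddings $N_i \hookrightarrow M$ with $N_i \cong A/\PP_i$ for $i=1,2$ (these exist by definition of an associated prime). The key preliminary observation is that $N_1 \cap N_2 = 0$: indeed, $N_1 \cap N_2$ is a submodule of $N_1 \cong A/\PP_1$, which is an integral domain, hence any nonzero element has annihilator exactly $\PP_1$; but the intersection is also annihilated by $\PP_2$, so its annihilator contains $\PP_1 + \PP_2 \supsetneq \PP_1$ (strict since $\PP_1 \neq \PP_2$ and both are prime), forcing $N_1 \cap N_2 = 0$. Therefore $N_1 \oplus N_2$ embeds as a submodule of $M$.

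Next, since $\MM \supset \PP_i$, the quotient map $A \twoheadrightarrow A/\MM$ factors through $A/\PP_i$, giving a surjection $N_i \twoheadrightarrow A/\MM$ for each $i$. Taking the direct sum of these two surjections yields a surjection $N_1 \oplus N_2 \twoheadrightarrow (A/\MM)^2$. Composing, we see $(A/\MM)^2$ is a subquotient of $M$, contradicting the hypothesis that $M$ is meager. Hence no such $\MM$ exists, i.e.\ $\PP_1 + \PP_2 = A$.

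I do not expect any serious obstacle here; the only subtlety is verifying $N_1 \cap N_2 = 0$ so that the direct sum $N_1 \oplus N_2$ actually sits inside $M$ (and not merely as an image), and this is handled by the elementary observation about annihilators above. The noetherian hypothesis is not needed for this particular lemma.
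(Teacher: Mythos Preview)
Your argument is essentially the same as the paper's: exhibit $A/\PP_1 \oplus A/\PP_2$ inside $M$ and then surject onto $(A/\MM)^2$. The only quibble is your parenthetical justification for $\PP_1 + \PP_2 \supsetneq \PP_1$: distinctness and primality alone do not prevent $\PP_2 \subset \PP_1$; the clean fix is to note that a nonzero $x \in N_1 \cap N_2$ would have $\Ann(x) = \PP_1$ \emph{and} $\Ann(x) = \PP_2$, forcing $\PP_1 = \PP_2$.
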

\begin{proof}
If $\PP$, $\QQ$ are distinct associated ideals, then $M$ has a submodule isomorphic to $(A/\PP)\times(A/\QQ)$. If they are not disjoint, then there exists a maximal ideal $\MM$ such that both $\PP$ and $\QQ$ are contained in $\MM$, and hence $M$ admits $(A/\MM)^2$ as subquotient, since the latter is a quotient of $(A/\PP)\times(A/\QQ)$.
\end{proof}

For an $A$-module $M$, define
\[M(\PP)=\{m\in M:\exists n\ge 1:\PP^nm=0\}.\]

\begin{lem}\label{assdis}
Let $M$ be a finitely generated $A$-module whose associated ideals are pairwise disjoint. 
Write
\[I=\prod_{\PP\in\Ass_A(M)}\PP\quad\text{and, for }\PP\in\Ass_A(M),\;I_\PP=\prod_{\mathcal{Q}\in\Ass_A(M)\smallsetminus\{\PP\}}\mathcal{Q}.\] Then there exists $n$ such that $I^nM=0$ and, for all $\PP\in\Ass_A(M)$ we have $M(\PP)=I_\PP^nM$. Moreover, $M=\bigoplus_{\PP\in\Ass_A(M)}M(\PP)$.
 \end{lem}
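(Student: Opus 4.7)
The plan is to exploit the strong coprimality hypothesis via the Chinese Remainder Theorem, establishing all three claims with a single uniform exponent $n$.

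First I would observe that the associated primes $\PP_1,\dots,\PP_k$ are finite in number (as $M$ is noetherian) and, being pairwise coprime, are pairwise incomparable; hence every associated prime is a minimal prime of $\Ann(M)$. Consequently $\sqrt{\Ann(M)}=\bigcap_i\PP_i$. The standard fact that pairwise coprime ideals have intersection equal to their product (proved by iterating $(I+J)(I+K)\subseteq I+JK$) gives $\sqrt{\Ann(M)}=\prod_i\PP_i=I$. Since $I$ is finitely generated, some $I^n$ is contained in $\Ann(M)$, so $I^nM=0$; I would fix this $n$ once and for all.

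Next I would verify that $\PP_i^n$ and $\PP_j^n$ are coprime for $i\neq j$: if $a+b=1$ with $a\in\PP_i$, $b\in\PP_j$, expanding $(a+b)^{2n}$ exhibits $1\in\PP_i^n+\PP_j^n$. A similar argument shows that products of pairwise coprime ideals remain coprime, so $\PP_i^n$ and $I_{\PP_i}^n=\prod_{j\neq i}\PP_j^n$ are coprime, and $I^n=\prod_i\PP_i^n=\bigcap_i\PP_i^n$. The Chinese Remainder Theorem then gives $A/I^n\simeq\prod_iA/\PP_i^n$, and since $M$ is an $A/I^n$-module this produces a direct-sum decomposition $M=\bigoplus_iM_i$, where $M_i=\{m\in M:\PP_i^nm=0\}$.

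Finally I would identify each $M_i$ with both $M(\PP_i)$ and $I_{\PP_i}^nM$. The inclusion $M_i\subseteq M(\PP_i)$ is tautological; conversely, if $\PP_i^rm=0$ and $m=\sum_j m_j$ along the decomposition, then $\PP_i^rm_j=0$ in the direct summand $M_j$, and coprimality $\PP_i^r+\PP_j^n=A$ combined with $\PP_j^nm_j=0$ forces $m_j=0$ for every $j\neq i$, so $m\in M_i$. For $I_{\PP_i}^nM\subseteq M_i$, note $\PP_i^n\cdot I_{\PP_i}^n=I^n$ annihilates $M$, so $\PP_i^n$ kills $I_{\PP_i}^nM$. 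Conversely, coprimality $\PP_i^n+I_{\PP_i}^n=A$ supplies $a\in\PP_i^n$, $b\in I_{\PP_i}^n$ with $a+b=1$, whence every $m\in M_i$ satisfies $m=am+bm=bm\in I_{\PP_i}^nM$.

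The argument is entirely standard once the passage from pairwise coprimality of the $\PP_i$ to pairwise coprimality of the powers $\PP_i^n$ and of the products $I_{\PP_i}^n$ is isolated; the only point requiring a touch of care is checking that a single uniform exponent $n$ simultaneously annihilates $M$, describes every $M(\PP_i)$, and realizes the formula $M(\PP)=I_\PP^nM$.
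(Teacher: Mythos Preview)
Your proof is correct and in fact somewhat more direct than the paper's. The paper argues by induction on the number $c$ of associated primes: after reducing to $M$ faithful, it uses the localization $A_\PP$ (which is artinian local since $\PP$ is a minimal prime of $A$) to find $m$ with $\PP^mM\subset\ker(M\to M_\PP)$, deduces from this that $\Ass_A(I_\PP^mM)\subset\{\PP\}$, splits $M=\PP^mM\oplus I_\PP^mM$ via coprimality and absence of common associated primes, and then invokes the induction hypothesis on $\PP^mM$. Your approach bypasses both the localization step and the induction by first observing that pairwise coprimality forces every associated prime to be minimal over $\Ann(M)$, whence $\sqrt{\Ann(M)}=I$ and $I^nM=0$ for a single $n$; the Chinese Remainder Theorem for $A/I^n$ then delivers the full decomposition in one stroke, and the remaining identifications $M_i=M(\PP_i)=I_{\PP_i}^nM$ are straightforward. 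The paper's route has the mild advantage of tracking associated primes explicitly at each stage (which fits its later use), while yours is shorter and isolates the single uniform exponent $n$ more transparently.
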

\begin{proof}
For all $\PP,\QQ$ distinct in $\Ass_A(M)$, we have $1\in\PP+\QQ$. Multiplying such relations, we obtain that for all $\PP,\QQ_1,\dots,\QQ_c$ with $\PP\neq\QQ_i$, and all $n$ we have $1\in\PP^n+(\prod\QQ_i)^n$; we freely use this below.

We now prove the result by induction on the cardinal $c$ of $\Ass_A(M)$; if $c=0$ then $M=0$ and the result holds.

We can replace $A$ by its quotient by the annihilator of $M$ to assume that $M$ is a faithful $A$-module, so the associated ideals of $M$ are, by the disjointness assumption, the minimal prime ideals of $A$.

So, for $\PP\in\Ass_A(M)$, the localization $A_\PP$ is an artinian local ring; hence there exists $m$ such that $\PP^mA_\PP=0$ (we choose $m$ working for all $\PP\in\Ass_A(M)$). Denoting by $K_\PP$ the kernel of $M\to M_\PP$, this implies that $\PP^mM\subset K_\PP$. Since $K_\PP\otimes A_\PP=0$, we have $\PP\notin\Ass_A(K_\PP)$.

In case $\Ass_A(M)=\{\PP\}$, we deduce $\Ass_A(K_\PP)=\emptyset$, so $K_\PP=0$ and hence $\PP^mM=0$, proving the result for $c=1$.

In general ($c\ge 1$), we deduce hence $\PP\notin\Ass_A(\PP^mM)$.

Applying this to other associated prime ideals of $M$, we obtain that for every $\QQ\in\Ass_A(M)\smallsetminus\{\PP\}$, $\QQ\notin I_\PP^mM$ and $\QQ I_\PP^mM=I_\PP^mM$. So $\Ass_A(I_\PP^mM)\subset\{\PP\}$.

Since $\PP^m+I_\PP^m=A$, we have $M=\PP^mM+I_\PP^mM$. Since they do not have any common associated ideals, the sum is direct. Decomposing $\PP^mM$ by induction (and using the case $c=1$), we obtain the result for $M$.
\end{proof}

\begin{prop}\label{assdis2}
Let $M$ be an $A$-module whose associated ideals are pairwise disjoint (e.g., a meager $A$-module, by Lemma \ref{meagdi}). Then \[M=\bigoplus_{\PP\in\Ass_A(M)}M(\PP),\] and $\Ass_A(M(\PP))=\{\PP\}$ for all $\PP\in\Ass_A(M)$. 

Conversely, given a subset $X$ of pairwise disjoint prime ideals of $A$ and for each $\PP\in X$ a meager $A$-module $M(\PP)$ with $\Ass_A(M(\PP))=\{\PP\}$, the direct sum $\bigoplus_{\PP\in X}M(\PP)$ is meager.
\end{prop}
\begin{proof}
The first statement follows from the finitely generated case (Lemma \ref{assdis}). The second one is straightforward.
\end{proof}

Since any disjoint direct sum of meager modules is meager, Proposition \ref{assdis2} reduces the study of meager modules to the case of modules with a single associated prime.

\begin{thm}\label{meager_stru}
Let $M$ be a meager $A$-module with a single associated prime $\PP$. Then exactly one of the following holds:
\begin{enumerate}[(a)]
\item\label{imea1} $M$ has nonzero finite length and its submodules form a chain (and $\PP$ is a maximal ideal);
\item\label{imea2} $\PP$ is a maximal ideal, and there exists a (unique) prime ideal $\QQ$ of coheight 1 in the complete local ring $\widehat{A_\PP}$ such that $B=A_\PP/\QQ$ is a discrete valuation ring and $M$ is isomorphic to $\mathrm{Frac}(B)/B$ as $A$-module;
\item\label{imea3} $\PP$ has coheight 1, the quotient ring $A/\PP$ is a Dedekind domain, and $M$ is a torsion-free module of rank 1 over $A/\PP$ (or equivalently, is isomorphic to some nonzero submodule of $\mathrm{Frac}(A/\PP)$).
\end{enumerate}
Conversely, any $A$-module in one of these cases is meager with only associated ideal $\PP$.

The module $M$ is artinian in Cases (\ref{imea1}) and (\ref{imea2}) but not in (\ref{imea3}).

 The module $M$ is finitely generated in Case (\ref{imea1}) but not in (\ref{imea2}) (in Case (\ref{imea3}) it can both be finitely generated or not, for instance with $M=A/\PP$ on the one hand and $M=\mathrm{Frac}(A/\PP)$ on the other hand).

In Case (\ref{imea3}), the annihilator of $M$ is equal to $\PP$.
In Cases (\ref{imea2}), the annihilator of $M$ as $\widehat{A_\PP}$-module is equal to $\QQ$.
\end{thm}

\begin{rem}
In Case (\ref{imea2}) of Theorem \ref{meager_stru}, the annihilator $\mathcal{W}$ of $M$ as $A$-module is a (non-maximal) prime ideal (the inverse image of $\mathcal{Q}$ in $A$). Beware that $\mathcal{W}$ does not necessarily have coheight $1$. Indeed, consider a countable field $K$, $A=K[x,y]$, $\PP=\langle x,y\rangle$ and $\widehat{A_\PP}=K[\![x,y]\!]$. Then $A$ has only many countably many ideals, and for each ideal of coheight 1 $\QQ'$ contained in $\PP$, the completion $(A/\QQ')_{\PP/\QQ'}$ has only finitely many minimal prime ideals, which leaves only finitely many possibilities for $\QQ$ and hence for the isomorphism type of $M$. 

On the other hand, $\widehat{A_\PP}=K[\![x,y]\!]$ has uncountably many distinct principal ideals (see Theorem \ref{grande}), and hence (being a UFD) has uncountably many prime ideals of height 1, which yields uncountably many possibilities, and hence, with countably many exceptions on $\mathcal{Q}$, the resulting $A$-modules $\mathrm{Frac}(B)/B$  are faithful.
\end{rem}

\begin{lem}\label{localmeager}
Let $M$ be a meager $A$-module and $S$ a multiplicative subset of $A$. Then $S^{-1}M$ is a meager $S^{-1}A$-module.
\end{lem}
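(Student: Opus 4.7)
The plan is to argue the contrapositive: if $(S^{-1}A/\mathcal{N})^2$ is a subquotient of $S^{-1}M$ for some maximal ideal $\mathcal{N}$ of $S^{-1}A$, I will exhibit a maximal ideal $\MM$ of $A$ such that $(A/\MM)^2$ is a subquotient of $M$, violating meagerness. Writing $\mathcal{N}=S^{-1}\PP$ for a prime $\PP$ of $A$ disjoint from $S$, the field $K:=S^{-1}A/\mathcal{N}=S^{-1}(A/\PP)$ is identified with $\mathrm{Frac}(A/\PP)$. Using the standard fact that every subquotient of $S^{-1}M$ has the form $S^{-1}N$ for some subquotient $N$ of $M$, I write $K^2=S^{-1}N$.

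The heart of the argument is to descend a $K$-basis of $K^2$ to elements of $N$ killed by $\PP$. Pick $n_1,n_2\in N$ whose images form a $K$-basis of $K^2$. For each $a\in\PP$, the image of $an_i$ in $S^{-1}N$ vanishes, so some $s\in S$ satisfies $san_i=0$. Here noetherianness of $A$ enters essentially: $\PP$ is finitely generated, so combining finitely many such denominator-clearings yields a single $t_i\in S$ with $t_i\PP n_i=0$. Replacing $n_i$ by $t_in_i$ preserves the property of mapping to a $K$-basis element (as $t_i$ is invertible in $K$), allowing one to assume $\PP n_i=0$. Then $L:=An_1+An_2\subseteq N$ is naturally an $A/\PP$-module.

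Now select any maximal ideal $\MM$ of $A$ containing $\PP$ and examine the $A/\MM$-vector space $L/\MM L=L/(\MM n_1+\MM n_2)$. I claim the images $\bar n_1,\bar n_2$ are linearly independent over $A/\MM$: any relation $an_1+bn_2\in\MM n_1+\MM n_2$ rewrites as $(a-a')n_1+(b-b')n_2=0$ for some $a',b'\in\MM$, and passing to $K^2$ the $K$-linear independence of $[n_1],[n_2]$ forces $a-a',b-b'\in\PP\subseteq\MM$, so $a,b\in\MM$. Therefore $(A/\MM)^2$ embeds as a submodule of the quotient $L/\MM L$ of $L\subseteq N$, yielding $(A/\MM)^2$ as a subquotient of $M$ and the desired contradiction.

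The only subtle step is the descent: clearing denominators uniformly in $a\in\PP$ requires $\PP$ to be finitely generated, which is the one place where noetherianness of $A$ is really used. The rest is a routine combination of the correspondence between submodules of $M$ and of $S^{-1}M$ with linear algebra over $A/\MM$.
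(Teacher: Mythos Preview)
Your proof is correct and follows the same contrapositive strategy as the paper, but the technical execution differs in an instructive way. The paper first replaces $M$ by a finitely generated submodule so that $W^2$ (your $K^2$) is a \emph{quotient} of $S^{-1}M$, then simply looks at the image of $M$ inside $K^2$. Since $\PP$ already acts trivially on $K^2$, any two $K$-independent vectors in that image immediately span a copy of $(A/\PP)^2$, with no denominator clearing needed; noetherianness is not explicitly invoked there. You instead use the subquotient correspondence to write $K^2=S^{-1}N$ and work inside $N$, where $\PP$-annihilation is not automatic, and this forces the finite-generation-of-$\PP$ step. Both routes are fine; the paper's is slightly slicker because working in the target $K^2$ makes the annihilator condition free, while your route makes the role of noetherianness more visible.
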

\begin{proof}
If $S^{-1}M$ is not meager, we can suppose, replacing $M$ by a suitable finitely generated submodule if necessary, that $S^{-1}M$ has a quotient of the form $W^2$ for some simple $S^{-1}A$-module $W$. (If $W$ is simple as $A$-module, we are done but beware it is not automatic.) Let $P$ be the inverse image in $A$ of the annihilator in $S^{-1}A$ of any nonzero element of $W$. The image of $M$ in $W^2$ generates $W^2$ as $S^{-1}A$-module. In particular, it contains an $A$-submodule isomorphic to $(A/P)^2$. This implies that $M$ is not meager.
\end{proof}

\begin{lem}\label{dome}
Suppose that $A$ is a domain. Equivalences:
\begin{enumerate}[(i)]
\item\label{dome1} $A$ is a meager $A$-module;
\item\label{dome2} $\mathrm{Frac}(A)$ is a meager $A$-module;
\item\label{dome3} $A$ is a Dedekind domain.
\end{enumerate}
\end{lem}
\begin{proof}
Clearly (\ref{dome2}) implies (\ref{dome1}), and the converse is true because every finitely generated submodule of $\mathrm{Frac}(A)$ is contained in a cyclic submodule.

So let us prove the equivalence between (\ref{dome1}) and (\ref{dome3}).
Suppose that $A$ is a meager $A$-module and let us show that $A$ is Dedekind. First suppose that $A$ is a local domain with maximal ideal $\MM$. Then $A$ is Dedekind, that is, a discrete valuation ring, if and only $\MM/\MM^2$ is generated by a single element. This condition holds if $A$ is meager.

In general ($A$ maybe not local), $A$ Dedekind means that its localizations at all prime ideals are discrete valuation rings. Since all its localizations $A_\PP$ are meager as $A_\PP$-modules (by Lemma \ref{localmeager}), this follows from the local case. 

Conversely, assume that $A$ is Dedekind and let us show that $A$ is a meager $A$-module. First note that this is clear when $A$ is a principal ideal ring. Otherwise, $(A/\MM)^2$ is a subquotient of $A$ for some maximal ideal $\MM$. Localizing at $\MM$ and using flatness of $A_\MM$ as $A$-module, we obtain the same statement over the localization $A_\MM$, which is a discrete valuation ring, hence a principal ideal ring, and we reach a contradiction.
\end{proof}

\begin{lem}\label{mfgi}
Let $M$ be a meager finitely generated $A$-module of infinite length, with a single associated prime ideal $\PP$. Then $\PP$ has coheight 1 and $M$ is a torsion-free $(A/\PP)$-module of rank 1 (i.e., is isomorphic as $A$-module to a nonzero ideal of $A/\PP$).
\end{lem}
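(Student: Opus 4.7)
The plan is as follows. I will localize $M$ at every maximal ideal $\MM$ containing $\PP$, use Nakayama to show that meagerness forces $M_\MM$ to be cyclic, apply Lemma~\ref{dome} to the quotient $R' := A_\MM/\PP A_\MM = M_\MM/\PP M_\MM$ to see that $R'$ is a DVR (whence $\PP$ has coheight one and $A/\PP$ is Dedekind), and finally show $\PP M = 0$ by comparing lengths in the $\MM$-adic filtration and invoking Krull's intersection theorem. The conclusion then follows from the structure theorem for torsion-free rank-one modules over Dedekind domains.

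First note that $\PP$ is not maximal: a finitely generated module all of whose associated primes are maximal has finite length, contradicting the infinite length of $M$. Fix now a maximal ideal $\MM \supseteq \PP$. By Lemma~\ref{localmeager}, $M_\MM$ is a meager $A_\MM$-module, nonzero because $\PP A_\MM \in \Ass(M_\MM)$. By Nakayama, the minimum number of generators of $M_\MM$ equals $\dim_{A/\MM}(M_\MM/\MM M_\MM)$; meagerness (no $(A/\MM)^2$ subquotient) forces this dimension to be at most one, so $M_\MM \simeq A_\MM/I_\MM$ is cyclic with $\sqrt{I_\MM} = \PP A_\MM$. The quotient $R'$ is then a meager local domain, hence a DVR by Lemma~\ref{dome} (it cannot be a field, since $\PP \subsetneq \MM$). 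Letting $\MM$ range over maximal ideals containing $\PP$ shows that $\PP$ has coheight one and that $A/\PP$ is Dedekind.

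For the crucial step, meagerness of $M_\MM$ gives $\dim_{A/\MM}(\MM^n M_\MM/\MM^{n+1} M_\MM) \leq 1$ for every $n \geq 0$ (each of these graded pieces is an $A/\MM$-vector space which, if $2$-dimensional, would exhibit $(A/\MM)^2$ as a subquotient of $M$), whence $\ell_{A_\MM}(M_\MM/\MM^n M_\MM) \leq n$. On the other hand the canonical surjection
\[
M_\MM/\MM^n M_\MM \twoheadrightarrow M_\MM/(\PP M_\MM + \MM^n M_\MM) = R'/\MM^n R' = R'/\pi^n R'
\]
has target of length exactly $n$ (where $\pi$ is a uniformizer of $R'$), so by equality of lengths the surjection is an isomorphism. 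This means $\PP M_\MM \subseteq \MM^n M_\MM$ for every $n$; by Krull's intersection theorem $\PP M_\MM \subseteq \bigcap_n \MM^n M_\MM = 0$, and therefore $\PP M = 0$ (trivially at $\MM \not\supseteq \PP$). It remains to observe that $M$ is then a finitely generated torsion-free module over the Dedekind domain $A/\PP$, of rank one since each $M_\MM$ is cyclic over $A_\MM$ (hence over the DVR $(A/\PP)_{\MM/\PP}$), so the structure theorem identifies $M$ with a nonzero ideal of $A/\PP$. The subtlest point, in my view, is extracting the uniform bound $\dim_{A/\MM}(\MM^n M_\MM/\MM^{n+1} M_\MM) \leq 1$ on \emph{every} graded piece from meagerness; once this is combined with the length of $R'/\pi^n R'$, the length equality and Krull's intersection finish the proof at once.
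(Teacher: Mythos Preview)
Your proof is correct and takes a genuinely different route from the paper's. The paper argues via ordinal length: it shows $\ell(M)=\omega$ by contradiction, passing to a quotient $M'$ with $\ell(M')=\omega+1$, observing (via meagerness and Lemma~\ref{dome}) that $\Ass_A(M')=\{\PP\}$ with $\PP$ non-maximal, and then invoking Lemma~\ref{plusone} to produce a simple submodule of $M'$, which is impossible since a simple submodule would contribute a maximal associated prime. Lemma~\ref{lomega} then translates $\ell(M)=\omega$ into the stated conclusion.

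Your argument bypasses the ordinal-length machinery entirely. You localize at each maximal ideal $\MM\supseteq\PP$, use Nakayama plus meagerness to force $M_\MM$ cyclic, and then the key step---comparing the length bound $\ell(M_\MM/\MM^n M_\MM)\le n$ coming from meagerness against the exact length $n$ of the DVR quotient $R'/\pi^n R'$---forces $\PP M_\MM\subseteq\MM^n M_\MM$ for all $n$, whence $\PP M_\MM=0$ by Krull's intersection theorem. This is a clean, self-contained commutative-algebra argument that does not rely on Lemmas~\ref{lomega} or~\ref{plusone}. The paper's approach has the advantage of being very short once the ordinal-length lemmas are in place (and those lemmas are used elsewhere), while yours is more elementary and would be accessible to a reader unfamiliar with ordinal-valued length. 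One small point you leave implicit: torsion-freeness of $M$ over $A/\PP$ follows because every nonzero $m\in M$ has $\ann_A(m)$ contained in an associated prime of $M$, hence equal to $\PP$.
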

\begin{proof}
By Lemma \ref{lomega}, we have to prove that the ordinal length of $M$ is equal to $\omega$. Suppose by contradiction that the ordinal length of $M$ is $>\omega$; then $M$ has a quotient $M'$ with $\ell(M')=\omega+1$. As a quotient of $M$, all associated ideals of $M'$ contain $\PP$.
Since $A/\PP$ is a meager $A/\PP$-module, by Lemma \ref{dome}, $\PP$ has coheight 1. Since $M'$ is finitely generated and has infinite length, it has at least an associated prime ideal of positive coheight, and hence $\PP$ is the only possibility; since $M'$ is meager its associated prime ideals are pairwise disjoint and it follows that $\Ass_A(M')=\{\PP\}$. But this contradicts Lemma \ref{plusone}, which says that $M'$ has a simple submodule.
\end{proof}

\begin{proof}[Proof of Theorem \ref{meager_stru}]
All the additional statements are immediate. 

The ``conversely" statement is immediate in the first two cases; for the last one, we need to check that if $A$ is a Dedekind domain then $\mathrm{Frac}(A)$ is meager; this is done in Lemma \ref{dome}. 

Now let us prove the main statement, namely that every meager module $M$ with $\Ass_A(M)=\{\PP\}$ has the given form. 

Suppose that $\PP$ is not maximal and let us prove that we are in Case (\ref{imea3}). By Lemma \ref{dome}, $\PP$ has coheight 1 and $A/\PP$ is a Dedekind domain. To prove the result, it is enough to show that $\PP M=0$; this reduces to the finitely generated case and follows from Lemma \ref{mfgi}.

Suppose that $\PP$ is maximal. If $M$ has finite length, we are in Case (\ref{imea1}); assume that $M$ has infinite length and let us prove that we are in Case (\ref{imea2}). We claim that $M$ is artinian: indeed, if by contradiction we have a properly decreasing chain of submodules, in the quotient $M'$ by its intersection we have a decreasing chain of nonzero submodules with trivial intersection, but every nonzero submodule should contain the $\PP$-torsion (the set of elements killed by $\PP$, which in this case is reduced to a single simple submodule).

Since $\Ass_A(M)=\{\PP\}$, $M$ is naturally a module over the completion $\widehat{A_\PP}$, which is also meager. Since $M$ is artinian, the Matlis dual $T(M)$ is a finitely generated meager $\widehat{A_\PP}$-module, of infinite length. Since $M$ has a unique minimal nonzero submodule, $T(M)$ has a unique maximal proper submodule and hence is not decomposable as a nontrivial direct sum; thus, by Proposition \ref{assdis2}, $\Ass_{\widehat{A_\PP}}T(M)$ is a singleton $\{\mathcal{Q}\}$. Since $T(M)$ is finitely generated of infinite length, it has an associated prime ideal that is not maximal, so $\mathcal{Q}$ is not maximal. By Lemma \ref{dome}, $B=\widehat{A_\PP}/\QQ$ is a discrete valuation ring (and not a field). Applying the previous case to $T(M)$ over $\widehat{A_\PP}$, we are in Case (\ref{imea2}), which in this case implies that $T(M)$ is a free $B$-module of rank 1. So $\QQ M=0$, and applying Matlis duality over the discrete valuation ring $B$, for which an injective hull of the residual field is given by $\mathrm{Frac}(B)/B$, we deduce that $M$ is isomorphic as $B$-module, and hence as $A$-module, to $\mathrm{Frac}(B)/B$.
\end{proof}

\subsection{Counting submodules of meager modules}

A first consequence of Theorem \ref{meager_stru} is Corollary \ref{uniserial}.

\begin{proof}[Proof of Corollary \ref{uniserial}]
Clearly, the assumption implies that $M$ is meager with a single associated ideal. So we can apply Theorem \ref{meager_stru}, whose first case (finite length) is excluded by assumption. If we are in Case (\ref{imea2}) of Theorem \ref{meager_stru}, then we obtain (\ref{icha2}). If we are in Case (\ref{imea3}), we first observe that $A/\PP$ has to be local, since when $A/\PP$ is not local then its ideals do not form a chain, as we see by taking two distinct maximal ideals. So $A/\PP$ is a discrete valuation ring, and it is immediate that every nonzero proper submodule of the fraction field is isomorphic to the ring as a module.
\end{proof}

\begin{thm}\label{dedenb}
Let $A$ be a Dedekind domain (which is not a field); let $\alpha$ be the cardinal of its set of maximal ideals. Let $M$ be a nonzero submodule of $\mathrm{Frac}(A)$. Writing $\mathrm{Frac}(A)/A=\bigoplus_{\PP}\mathrm{Frac}(A)/A_\PP$, let $\mathcal{S}$ be the set of $\mathcal{P}$ such that the projection of $M$ on $\mathrm{Frac}(A)/A_\PP$ is nonzero, and let $\gamma$ be the cardinal of $\mathcal{S}$.

Then the number of submodules of $M$ is $\max(\alpha,2^{\gamma},\aleph_0)$. In particular, if $M$ is minimax, then this is $\max(\alpha,\aleph_0)$.
\end{thm}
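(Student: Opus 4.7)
The strategy is to parameterize the nonzero submodules $N$ of $\mathrm{Frac}(A)$ by their \emph{valuation vectors}. For each maximal ideal $\PP$ of $A$, with associated normalized discrete valuation $v_\PP$, set
\[v_\PP(N) = \inf\{v_\PP(x):0\neq x\in N\}\in\mathbf{Z}\cup\{-\infty\}.\]
Since $A_\PP$ is a DVR, $NA_\PP=\PP^{v_\PP(N)}A_\PP$ (with the convention $\PP^{-\infty}A_\PP=\mathrm{Frac}(A)$). The key step is the local-global identity $N=\bigcap_\PP NA_\PP$ inside $\mathrm{Frac}(A)$: for $x$ in the right-hand side, the ideal $\{a\in A:ax\in N\}$ meets $A\smallsetminus\PP$ for every maximal $\PP$ (because $x\in NA_\PP$ means $sx\in N$ for some $s\in A\smallsetminus\PP$), so it equals $A$, hence $x\in N$. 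Combining this with strong approximation in the Dedekind domain $A$, I obtain a bijection between nonzero submodules of $\mathrm{Frac}(A)$ and ``admissible'' functions $f:\mathrm{MaxSpec}(A)\to\mathbf{Z}\cup\{-\infty\}$ (meaning $f(\PP)\le 0$ for all but finitely many $\PP$), with $N\subseteq N'$ iff $f_N\ge f_{N'}$ pointwise.

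Since the projection $M\to\mathrm{Frac}(A)/A_\PP$ is nonzero exactly when $M\not\subseteq A_\PP$, we have $\mathcal{S}=\{\PP:v_\PP(M)<0\}$, and submodules of $M$ correspond to admissible $f$ with $f\ge v_\bullet(M)$ pointwise. For the lower bound $\max(\alpha,\aleph_0)$: any cyclic $Ax\subseteq M$ is isomorphic to $A$, whose nonzero ideals factor uniquely as finite products of primes and so number $\max(\alpha,\aleph_0)$. For the lower bound $2^\gamma$: each $\PP\in\mathcal{S}$ independently offers at least two valid values for $f(\PP)$ (the allowed set $\{v_\PP(M),\dots,0\}$, or $\{-\infty\}\cup\mathbf{Z}_{\le 0}$ when $v_\PP(M)=-\infty$, always contains at least two elements), yielding $2^\gamma$ distinct admissible functions.

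For the matching upper bound, I would partition $\mathrm{MaxSpec}(A)$ into $\mathcal{S}$, the finite set $T=\{\PP:v_\PP(M)>0\}$, and the rest $R$, and count restrictions of $f$ to each part. On $R$, $f(\PP)\in\mathbf{Z}_{\ge 0}$ with $f(\PP)=0$ outside a finite set: $\max(\alpha,\aleph_0)$ choices in all (the ideals of $A$ supported in $R$). On $T$ there are only $\aleph_0$ choices. On $\mathcal{S}$, the ``bulk'' where $f(\PP)\le 0$ places $f(\PP)$ in a countable set per $\PP$, contributing $\aleph_0^\gamma$; the finitely-many positive perturbations on $\mathcal{S}$ contribute a factor absorbed into $\max(\alpha,\aleph_0)$. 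Since $\aleph_0^\gamma=2^\gamma$ for infinite $\gamma$ and is countable for finite $\gamma$, multiplying gives the desired $\max(\alpha,2^\gamma,\aleph_0)$. Finally, if $\gamma$ were infinite then $M/Ax$ (for any nonzero $x\in M$) would surject onto an infinite direct sum of nonzero $\PP$-primary subquotients of the uniserial modules $\mathrm{Frac}(A_\PP)/A_\PP$ (indexed by $\mathcal{S}$), contradicting artinianity of $M/Ax$; hence $M$ minimax forces $\gamma<\aleph_0$, and the count collapses to $\max(\alpha,\aleph_0)$. The main obstacle, I anticipate, is the careful bookkeeping in this last counting step, cleanly separating the contributions of $R$, $T$, the $\mathcal{S}$-bulk, and the $\mathcal{S}$-positive-perturbations so as to land precisely on $\max(\alpha,2^\gamma,\aleph_0)$ with neither over- nor under-counting.
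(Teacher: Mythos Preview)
Your argument is correct and follows a genuinely different route from the paper. The paper first handles $M=A$ by counting ideals via primary decomposition, then scales to assume $A\subseteq M$ and stratifies the nonzero submodules $N$ of $M$ by the nonzero ideal $N\cap A$; for each such ideal $I$, the submodules of $M$ containing $I$ sit in the torsion module $M/I$, whose primary decomposition has $\gamma$ plus finitely many nonzero components, giving at most $\aleph_0$ (resp.\ $\aleph_0^\gamma=2^\gamma$) submodules when $\gamma$ is finite (resp.\ infinite). You instead set up once and for all a bijection between nonzero submodules of $\mathrm{Frac}(A)$ and admissible valuation vectors, then count those dominating $v_\bullet(M)$ by splitting $\mathrm{MaxSpec}(A)$ into $R$, $T$, and $\mathcal{S}$. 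Your parameterization is more explicit and makes the combinatorics transparent, at the price of invoking strong approximation to establish surjectivity of the valuation-vector map; the paper's stratification is more elementary (needing only the primary decomposition of torsion modules and the ideal count) but less direct. One small point: your assertion that $M/Ax$ is artinian when $M$ is minimax deserves a word of justification---it holds because $M/Ax$ is torsion, hence locally of finite length, and a minimax module that is locally of finite length is artinian.
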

\begin{proof}
We start with the case $M=A$; we have to show that the cardinal $\delta$ of the number of ideals of $A$ is $\beta=\max(\alpha,\aleph_0)$. Clearly this cardinal is infinite and at least equal to the cardinal of the set of maximal ideals of $A$, so $\delta\ge\beta$. Let us show the reverse inequality $\delta\le\beta$. For every maximal ideal $\MM$, every $\MM$-primary ideal contains $\MM^n$ for some $n$, and the set of ideals of $A/\MM^n$ is a finite chain for all $n$, hence is finite. So the set of $\MM$-primary ideals has cardinal $\aleph_0$. Since every ideal is a finite intersection of primary ideals, it is determined by a finite set $F$ of maximal ideals an a choice of $\MM$-primary ideal for every $\MM\in F$, so the number of ideals is $\le\beta$.

Now let $M$ be arbitrary. Multiplying $M$ by a nonzero element of $\mathrm{Frac}(A)$ does not change its number of submodules, and does only affect $\mathcal{S}$ by a finite set, and in particular does not affect $\max(2^{\gamma},\aleph_0)$. So we can suppose that $M$ contains $A$.

Clearly $\max(\alpha,\aleph_0)$ is a lower bound for the number of submodules of $M$. Since $M/A$ is a direct sum of $\gamma$ nonzero submodules, it contains at least $2^{\gamma}$ submodules. This complete the lower bound.

If $\gamma$ is finite, the number of submodules of $M/A$ is (at most) countable, and the same argument shows that the number of submodules containing a given nonzero ideal of $A$ is countable, and since the number of ideals is $\max(\aleph_0,\alpha)$, we deduce the upper bound $\alpha\max(\aleph_0,\alpha)=\max(\aleph_0,\alpha)$.

If $\gamma$ is infinite, the number of submodules of $\mathrm{Frac}(A)/A_\PP$ is $\aleph_0$, and hence the number of submodules of $M/A$ is $\le\aleph_0^{\gamma}$. Since $\gamma$ is infinite $2^\gamma=2^{\aleph_0\gamma}=(2^{\aleph_0})^\gamma$, we have $\aleph_0^{\gamma}=2^\gamma$. For the same reason, the number of submodules of $M$ containing any given nonzero ideal of $A$ is $\le 2^\gamma$. Hence the number of submodules is $\le \max(\alpha,\aleph_0)2^\gamma=\max(\alpha,\aleph_0,2^\gamma)$.

For the last statement, just observe that if $M$ is minimax then $\gamma$ is finite.
\end{proof}

\begin{proof}[Proof of Theorem \ref{prop3}]
Since $M$ is minimax, it has finitely many associated ideals. Write the disjoint (finite) direct sum $M=\prod_\PP M(\PP)$ as in Proposition \ref{assdis2}. Since any submodule decomposes accordingly, we have, denoting by $\Sub(M)$ the set of its submodules, $\Sub(M)=\prod_\PP\#\Sub(M(\PP))$. We use Theorem \ref{meager_stru} to get the following discussion.

If $M$ has finite length then $\Sub(M(\PP))$ is a finite chain and hence $\Sub(M)$ is finite. 

If $M(\PP)$ is artinian of infinite length, the cardinal of $\Sub(M(\PP))$ is $\aleph_0$. Hence is $M$ is artinian (or equivalently all its associated ideals are maximal) and has infinite length, then the cardinal of $\Sub(M)$ is $\aleph_0$.

If $M(\PP)$ is non-artinian, the cardinal $\Sub(M(\PP))$ is, by Theorem \ref{dedenb}, equal to $\max(\beta_\PP,\aleph_0)$, where $\beta_\PP$ is the cardinal of the set of maximal ideals of $A/\PP$. We deduce, in this case that, denoting $\beta=\max_\PP\beta_\PP$, that the cardinal of $\Sub(M)$ is $\max(\beta,\aleph_0)$.

In the non-meager case, $M$ has a subquotient of the form $K^2$ for some quotient field $K$ of $A$, and by assumption $K$ has cardinal $\alpha$, so $K^2$ has exactly $\alpha$ submodules, and hence $M$ has at least $\alpha$ submodules. 

Since $A$ is noetherian and $M$ is minimax, it is easily checked that $M$ has a countable generating family as well as its submodules, and hence the cardinal of the set of submodules of $M$ is $\le\alpha^{\aleph_0}$.

By Lemma \ref{crande} and Matlis duality, if {\LL} fails then $M$ has $\ge \alpha^{\aleph_0}$ submodules. Similarly, by Proposition \ref{carre0} and Matlis duality, if {\LLL} fails then $M$ has $\ge \alpha^{\aleph_0}$ submodules.

Now assume that Properties {\LL} and {\LLL} hold and let us show that $M$ has $\le\alpha$ submodules. Let $N$ be a finitely generated submodule of $M$ such that $M/N$ is artinian. Since a finitely generated module has $\le\alpha$ submodules, it is enough to show that the number of submodules with given intersection $J$ with $N$ is $\le\alpha$. Working in $M/J$ reduces to the case $J=0$. In other words, we have to show that submodules with zero intersection with $N$ is $\le\alpha$. Since such submodules are artinian, they are contained in the union of all finitely generated submodules of $M$, which is artinian. So we are reduced to the case when $M$ is artinian. Then $M$ is a finite product of artinian modules with a single associated ideal, which reduces to the case when $M$ has a single associated ideal $\PP$, which is maximal, and hence is naturally an $\widehat{A_\PP}$-module. Then we are reduced to the statement that if $B$ is a complete local ring with residual field of cardinal $\alpha$ and $M$ is an artinian $B$-module satisfying {\LL} and {\LLL} then $B$ has $\le\alpha$ submodules. This follows from Proposition \ref{carre} by Matlis duality.
\end{proof}

%%%%%%%%%%%%%%%%%%%%%%%%%%%%%%%%%%%%%%%%%%%%%%%%%%%%%%%%%%%%%%%%%%%%%%%%%%%%%

%%%%%%%%%%%%%%%%%%%%%%%%%%%%%%%%%%%%%%%%%%%%%%%%%%%%%%%%%%%%%%%%%%%

% ----------------------------------------------------------------
\end{document}